\newcounter{notes}%
\definecolor{darkgreen}{rgb}{0.0, 0.5, 0.0}
\newtheorem{theorem}{Theorem}[section]
\newtheorem{lemma}[theorem]{Lemma}
\newtheorem{corollary}[theorem]{Corollary} 
\newtheorem{definition}[theorem]{Definition} 
\newtheorem{proposition}[theorem]{Proposition}
\newtheorem{remark}[theorem]{Remark}
\def\smallskip{\vspace{.15cm}}
\def\medskip{\vspace{.3cm}}
\def\text{\mbox}
\def\rh2{{\mathbb R}{\mathbb H}^2}
\def\ch2{{\mathbb C}{\mathbb H}^2}
\def\QQ{{\mathbb Q}}
\def\EE{{\mathbb E}}
\def\ZZ{{\mathbb Z}}
\def\FF{{\mathbb F}}
\def\P{{\mathbb P}}
\def\RP2{{\mathbb{RP}}^2}
\def\RP3{{\mathbb{RP}}^3}
\def\RP{{\mathbb{RP}}}
\def\tri{{\mathcal T}}
\def\Id{\operatorname{Id}}
\def\SL{\operatorname{SL}}
\def\Id{\operatorname{Id}}
\def\End{\operatorname{End}}
\def\GL{\operatorname{GL}}
\def\int{\operatorname{int}}
\def\H2R{{\mathbb H}^2\times {\mathbb R}}
\def\C2{\operatorname{C^2}}
\def\Fix{\mathrm{Fix}}
\definecolor{back}{RGB}{255,255,255}
\definecolor{fore}{RGB}{0,0,0}
\definecolor{title}{RGB}{255,0,90}
\definecolor{green}{rgb}{0.0, 0.5, 0.0}
\definecolor{purple}{rgb}{0.5, 0.0, 0.5}
\definecolor{bluegreen}{rgb}{0.0,0.5, 0.5}
\definecolor{orange}{rgb}{1,0.5, 0.1}
\definecolor{redgreen}{rgb}{0.5, 0.5, 0.0}
\def\red{\color{red}}
\def\green{\color{green}}
\def\red{\color{red}}
\def\green{\color{green}}
\def\g2{{\green 2}}
\newcommand{\bv}{\left[\begin{array}{c}}
\newcommand{\ev}{\end{array}\right]}
\newcommand{\bbmat}{\begin{bmatrix}} 
\newcommand{\ebmat}{\end{bmatrix}}
\newcommand{\bmat}{\begin{matrix}}
\newcommand{\emat}{\end{matrix}}
\newcommand{\bpmat}{\begin{pmatrix}} 
\newcommand{\epmat}{\end{pmatrix}}
\begin{document}
\title{On Wonderful compactifications of $\SL(2, \FF)$\\ for non-Archimedean local fields $\FF$} 
\author{Corina Ciobotaru}
\thanks{corina.ciobotaru@gmail.com}
\date{January 2, 2023}

\begin{abstract}
We compute the wonderful compactification of symmetric varieties of $\SL(2, \FF)$, where $\FF$ is a finite field-extension of $\QQ_p$ with $p\neq 2$, that comes from either an abstract or $\FF$-involutions of $\SL(2, \FF)$. For each of those wonderful compactifications we find the $\SL(2, \FF)$-stabilizers of the accumulation points of the corresponding symmetric varieties and compare them to the Chabauty limits found in \cite{CL_2}.
\end{abstract}

\maketitle 

\section{Introduction}

The wonderful compactifications emerged from specific problems in enumerative geometry. In laymen's terms a wonderful compactification is a way of compactifying a symmetric variety $G/H$ via an irreducible linear representation of the group $G$, where $G$ is an algebraic group and $H \leq G$ is the fixed point set of an involutorial automorphism of $G$.  This procedure will provide remarkable properties of the $G$-closed orbits of that compactification.  In principal, the wonderful compactification of $G/H$ does not depend on the chosen irreducible linear representation of $G$, and a Borel subgroup of $G$ will only have one open and dense orbit in $G/H$. The study of  wonderful compactifications was initiated by DeConcini and Procesi \cite{DCP} in the case of a complex semi-simple group $G$ of adjoint type (i.e. a semi-simple group with trivial center).  Since then wonderful compactifications have been extensively studied in algebraic geometry and have important applications in many fields of Mathematics (see the Introduction of \cite{EJ}). If one replaces the complex numbers by any (not necessarily algebraically closed) field $k$ of characteristic $\neq 2$, the results of \cite{DCP}  were further expanded by DeConcini and Springer in \cite{DCS} to any adjoint semi-simple group $G$ defined over $k$ and $H$ the fixed point group of an involutorial automorphism of $G$ defined over $k$.

In this article we focus on the wonderful compactifications of symmetric varieties of $\SL(2, \FF)$, where $\FF$ a finite field-extension of $\QQ_p$ with $p\neq 2$, that come from various abstract and $\FF$-involutions of $\SL(2, \FF)$. Notice that $\SL(2,\FF)$ is not of adjoint type. In particular, we are interested in computing the $\SL(2, \FF)$-stabilizers of the accumulation points in the  wonderful compactifications of those symmetric varieties and we compare them to the Chabauty limits found in \cite{CL_2}. Although some of the techniques used in this article to build the wonderful compactifications are similar to the well established literature (e.g. \cite{EJ}), we will make extensive use of some results from \cite{Beun}, the Bruhat--Tits tree and its visual boundary associated with $\SL(2, \FF)$, which contrast with the methods employed in \cite{DCS}.

\medskip
Let us first fix the notation. Throughout this article we restrict to a prime $p\neq 2$. Let $\FF$ be a finite field-extension of $\QQ_p$ and $\EE$ be any quadratic extension of $\FF$. Let $k_\FF, k_\EE$ be the residue fields of $\FF,\EE$, respectively,  and $\omega_\FF, \omega_\EE$ be  uniformizers of $\FF,\EE$, respectively. Recall $k_\FF^{*}/(k_\FF^{*})^{2} =\{1, S\}$, for some non-square $S \in k_\FF^{*}$. Then $\FF^{*}/(\FF^{*})^{2} =\{1, \omega_\FF, S, S\omega_\FF\}$ (\cite[Corollaries to Theorems 3 and 4]{Serre} or \cite[page 41, Section 12]{Sally}). If $\EE$ is \emph{ramified} then $\EE = \FF(\sqrt{\omega_\FF})$, or $\EE= \FF(\sqrt{S \omega_\FF})$ (where $\omega_\FF \neq \omega_\EE$), or if $\EE$ is \emph{unramified} then $\EE=\FF(\sqrt{S})$ (where $\omega_\FF = \omega_\EE$).    We choose the unique valuation $\vert \cdot \vert_\EE$ on $\EE$  that extends the given valuation $\vert \cdot \vert_\FF$ on $\FF$. Choose $\alpha \in \{\sqrt{\omega_\FF}, \sqrt{S}, \sqrt{S\omega_\FF}\}$ and so
$\EE=\FF(\alpha)$. Notice each element $x \in\EE$ can be uniquely written as $x = a+b \alpha$, with $a,b \in \FF$. For the ramified extensions we can consider $\omega_\EE^2 = \omega_\FF$. Let $\mathcal{O}_\FF:=\{ x \in \FF \; \vert \; \vert x \vert_\FF \leq 1 \}$ denote the ring of integers of $\FF$,  then $\mathcal{O}_\FF$ is compact and open in $\FF$. For $\FF=\QQ_p$ we have $\mathcal{O}_{\QQ_p}=\ZZ_p$, $\omega_{\QQ_p}=p$, $k_{\QQ_p}=\FF_p$, $\FF_p^{*}/(\FF_p^{*})^{2} =\{1, S_p\}$.

We denote by $\tri_\FF$ the Bruhat--Tits tree for $\SL(2,\FF)$ whose vertices  are equivalence classes of $\mathcal{O}_\FF$-lattices in $\FF^2$ (for its construction see \cite{Serre}). The tree $\tri_\FF$ is a regular, infinite tree with valence $\vert k_\FF\vert +1$ at every vertex. The boundary at infinity $\partial \tri_{\FF}$ of $\tri_{\FF}$ is the projective space $P^1(\FF) \cong \FF \cup \{\infty\}$. Moreover, the end $\infty \in \partial \tri_{\FF}$ corresponds to the vector  $\big[ \begin{smallmatrix}  0 \\1 \end{smallmatrix} \big] \in P^1(\FF)$. The rest of the ends $\xi \in \partial \tri_\FF$ correspond to the vectors $\big[ \begin{smallmatrix} 1 \\x \end{smallmatrix} \big] \in P^1(\FF) $, where $x \in \FF$. To give a concrete example,  the Bruhat--Tits tree of $\SL(2, \QQ_p)$ is the $p+1=\vert \FF_p \vert+1$-regular tree. The boundary at infinity $\partial \tri_{\QQ_p}$ of $\tri_{\QQ_p}$ is the projective space $P^1(\QQ_p)=\QQ_p \cup \{\infty\}$.

Let $\rho : \SL(2,\FF) \to \GL(\FF^{2})$ be the representation of $\SL(2,\FF)$ over $\FF$ given by $g \in \SL(2,\FF) \mapsto \rho(g):=g$.
One can easily prove $\rho$ is an irreducible and continuous representation of $\SL(2,\FF)$ with respect to the compact-open topologies of the locally compact groups $\SL(2,\FF)$ and $\GL(\FF^{2})$.  By the known theory $\rho$ has a dominant weight (see \cite[Theorem 2.5]{Tits}). 

For an abstract involution $\theta : \SL(2,\FF)  \to \SL(2,\FF)$, i.e. \textit{$\theta$ is an abstract automorphism of $\SL(2,\FF)$ with $\theta^2 =\Id$}, we consider: 
\begin{enumerate}
\item
the  action of $\SL(2,\FF)$ on $\End(\FF^2)$ given by $g\cdot_{\theta} D := \rho(g) D \rho(\theta(g^{-1}))$, for every $g \in \SL(2,\FF)$ and $D \in \End(\FF^2)$,
\item
 the continuous map $\psi_{\theta} : \SL(2,\FF) \to \P(\End(\FF^2))$ given by  $g \mapsto \psi_{\theta}(g):=[\rho(g)\rho(\theta(g^{-1}))]=[\rho(g \theta(g^{-1}))]$,
\item
and the fixed point group of $\theta$ denoted by $H_{\theta}:= \{g \in \SL(2,\FF) \; \vert \; \theta(g)=g\}$.
\end{enumerate}

\begin{definition}
We take $X_\theta:=\overline{\psi_{\theta}(\SL(2,\FF))} = \overline{[\SL(2,\FF)\cdot_{\theta}\Id_{\FF^{2}}]}  \subset \P(\End(\FF^2))$ and, by abuse of terminology, call it \textit{the wonderful compactification of $\SL(2,\FF)/H_{\theta}$ with respect to the involution $\theta$}.
\end{definition}

We will see in the proofs below that a Borel subgroup of $\SL(2,\FF)$ might have more open orbits on  $\SL(2,\FF)/H_{\theta}$, that are pairwise disjoint and not dense in $\SL(2,\FF)/H_{\theta}$. This is because $\SL(2,\FF)$ is not of adjoint type. By abuse of notation, for any $g \in \SL(2,\FF)$ and any $[D] \in X_{\theta} \subset \P(\End(\FF^2))$ we use $g\cdot_{\theta} [D]$ to mean $[\rho(g) D\rho(\theta(g^{-1}))] \in  X_{\theta}$ for some, thus any, representative $D \in \End(\FF^2)$ of $[D] \in \P(\End(\FF^2))$.

By the known theory, those wonderful compactifications do not depend on the irreducible representation $\rho$ of $\SL(2,\FF)$ (for a proof see \cite[Section 7]{Ana} or \cite[Proposition 3.10]{DCS}).

The three main results of the paper are as follows. They are proved in Sections \ref{sec::diag}, \ref{sec::all_orbits_E}, \ref{sec::all_orbits}, respectively, where  the reader can refer for the notation.

\begin{theorem}
\label{thm::all_orbits_diag}
Consider the involution $\theta(x,y):=(y,x)$ of  $\SL(2,\FF) \times \SL(2,\FF)$ where $H_{\theta}:=Diag(G)=\SL(2,\FF)$ is the fixed point group, and $\SL(2,\FF) \times \SL(2,\FF)$ acts on $\End(\FF^2)$ by $(g_1,g_2)\cdot_{\theta}A=\rho(g_1) A \rho(g_2^{-1})$. 
Then there are  $2$ orbits of $\SL(2,\FF) \times \SL(2,\FF)$ in $X_{\theta}$ with respect to the action $\cdot_\theta $:
\begin{enumerate}
\item
the open $\SL(2,\FF) \times \SL(2,\FF)$-orbit of $\Id_{\FF^2}$, whose $\SL(2,\FF) \times \SL(2,\FF)$-stabilizer is $Diag(\SL(2,\FF) \times \SL(2,\FF))= \SL(2,\FF)$  
\item
the closed $\SL(2,\FF) \times \SL(2,\FF)$-orbit of $[1,0;0,0]  \in \P(\End(\FF^2))$, whose $\SL(2,\FF) \times \SL(2,\FF)$-stabilizer is $B^{+} \times B^{-}$. 
\end{enumerate}
\end{theorem}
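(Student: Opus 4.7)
The plan is to identify $X_\theta \subset \P(\End(\FF^2))$ explicitly as the union of two $(\SL(2,\FF) \times \SL(2,\FF))$-orbits and then pin down their stabilizers.

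First I would observe that $\psi_\theta(g_1, g_2) = [g_1 \, \Id_{\FF^2} \, g_2^{-1}] = [g_1 g_2^{-1}]$, so the $(G \times G)$-orbit of $[\Id_{\FF^2}]$ in $\P(\End(\FF^2))$ is precisely the image of $\SL(2,\FF)$, namely $\PSL(2,\FF) \subset \PGL(2,\FF) = \{[A] : \det A \neq 0\}$. The stabilizer of $[\Id_{\FF^2}]$ consists of pairs $(g_1, g_2)$ with $g_1 g_2^{-1}$ a scalar matrix; since both factors lie in $\SL$, that scalar must square to $1$, so $g_1 = \pm g_2$, recovering $\mathrm{Diag}(G \times G)$ (with $\pm \Id_{\FF^2}$ acting trivially projectively). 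To see this orbit is open in $X_\theta$, I would use that the reduced determinant $\det : \PGL(2,\FF) \to \FF^*/(\FF^*)^2$ is continuous into the discrete group $\FF^*/(\FF^*)^2$ (discrete because $(\FF^*)^2$ is open in $\FF^*$), so that $\PSL(2,\FF) = \det^{-1}(\bar{1})$ is clopen in $\PGL(2,\FF)$.

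Next I would identify the remaining points of $X_\theta$ with the single $(G \times G)$-orbit of rank-$1$ projective classes. The key tool is the Cartan ($KAK$) decomposition in $\SL(2,\FF)$: every $g$ writes as $k_1 \, \mathrm{diag}(\omega_\FF^n, \omega_\FF^{-n})\, k_2$ with $k_1, k_2 \in \SL(2,\mathcal{O}_\FF)$. Rescaling by $\omega_\FF^n$ gives $[g] = \bigl[k_1 \, \mathrm{diag}(\omega_\FF^{2n}, 1)\, k_2\bigr]$, which converges as $n \to \infty$ to $\bigl[k_1 \bigl(\begin{smallmatrix} 0 & 0 \\ 0 & 1\end{smallmatrix}\bigr) k_2\bigr]$, a rank-$1$ class in $\P(\End(\FF^2))$. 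Since $\SL(2,\FF)$ acts transitively on $\FF^2 \setminus \{0\}$ both directly and via the transpose action, the $(G \times G)$-orbit of $[1,0;0,0]$ fills the entire rank-$1$ locus $\{[v w^T] : v, w \in \FF^2 \setminus \{0\}\}$. Together with the clopenness of $\PSL(2,\FF)$ inside $\PGL(2,\FF)$, this yields the clean decomposition $X_\theta = \PSL(2,\FF) \sqcup \{[A] : \mathrm{rank}\, A = 1\}$.

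Finally, I would compute the stabilizer of $[1,0;0,0]$ by direct matrix multiplication: writing $g_1 = \bigl(\begin{smallmatrix} a & b \\ c & d\end{smallmatrix}\bigr)$ and $g_2 = \bigl(\begin{smallmatrix} e & f \\ g & h\end{smallmatrix}\bigr)$, one finds $g_1 \bigl(\begin{smallmatrix} 1&0\\0&0\end{smallmatrix}\bigr) g_2^{-1} = \bigl(\begin{smallmatrix} ah & -af \\ ch & -cf\end{smallmatrix}\bigr)$, and demanding proportionality to $\bigl(\begin{smallmatrix} 1&0\\0&0\end{smallmatrix}\bigr)$ forces $c = f = 0$, whence $g_1 \in B^+$ and $g_2 \in B^-$. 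The closed orbit is manifestly closed as the projective quadric $\{\det = 0\}$, and the open orbit is the clopen subset $\det^{-1}(\bar{1})$ of $\PGL(2,\FF)$, completing the picture. The main obstacle I foresee is the $\PSL$-versus-$\PGL$ subtlety in the non-Archimedean setting, absent over $\CC$: one must carefully verify that the $\FF^*/(\FF^*)^2$-obstruction prevents the closure of $\PSL(2,\FF)$ from absorbing any non-square-discriminant invertible classes, so that $X_\theta$ genuinely has exactly two orbits and not more.
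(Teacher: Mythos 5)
Your proposal is correct, and it reaches the theorem by a genuinely different route than the paper. The paper works through the big cell: it uses the density of $U^{-}TU^{+}$ in $\SL(2,\FF)$ (Lemmas \ref{lem::dense_utu} and \ref{lem::X0_int_G}), computes $[(U^{-}T\times U^{+})\cdot_{\theta}\Id_{\FF^{2}}]=\{[1,-b;a,-ab+x^{-2}]\}$ explicitly in the affine chart $\P_{0}$, reads off the accumulation points $[1,-b;a,-ab]$ as the $(U^{-}T\times U^{+})$-orbit of $[1,0;0,0]$, and then translates by $G\times G$; this is the template that gets reused verbatim in Sections \ref{sec::all_orbits_E} and \ref{sec::all_orbits}. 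You instead give a global description $X_{\theta}=\PSL(2,\FF)\sqcup\{[A]:\operatorname{rank}A=1\}$, producing the boundary via the Cartan decomposition $g=k_{1}\,\mathrm{diag}(\omega_{\FF}^{n},\omega_{\FF}^{-n})\,k_{2}$ rather than the Bruhat-type factorization, and controlling the invertible part via the square-class determinant $\PGL(2,\FF)\to\FF^{*}/(\FF^{*})^{2}$ into a discrete group. What your approach buys is a clean, chart-free identification of the whole compactification and an explicit treatment of the non-adjoint subtlety that the introduction only alludes to (why the closure of $\PSL(2,\FF)$ cannot pick up invertible classes with non-square determinant); what the paper's approach buys is uniformity with the harder involutions later, where no such global model is available. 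Two small points to tidy: the stabilizer of the projective class $[\Id_{\FF^{2}}]$ is $\{(g,\pm g)\}$ rather than the diagonal on the nose (you flag this correctly; the paper's phrasing refers to the stabilizer of the matrix $\Id_{\FF^{2}}$ in $\End(\FF^{2})$ before projectivizing), and in the $KAK$ step it is enough to exhibit a single rank-one limit (e.g.\ $k_{1}=k_{2}=\Id$) and then invoke $G\times G$-invariance of the closure, which is how your transitivity argument should be read.
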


\begin{theorem}
\label{thm::all_orbits_E}
 Let $\FF$ be a finite field-extension of $\QQ_p$ and $\EE=\FF(\alpha)$ be any quadratic extension of $\FF$. Take $\theta$ to be the involution of $\SL(2,\EE)$ induced by the `conjugation' in $\EE$ with respect to $\FF$,  $\theta( a + \alpha b) := a - \alpha b$, for any $a,b\in \FF$.  There are $2$ orbits of $\SL(2,\EE)$ in $X_{\theta}$  with respect to the action $\cdot_\theta$:
\begin{enumerate}
\item
the open $\SL(2,\EE)$-orbit of $\Id_{\EE^2}$, whose  $\SL(2,\EE)$-stabilizer is $\SL(2,\FF)$
\item
the closed $\SL(2,\EE)$-orbit of $[0,  1; 0, 0] \in \P(\End(\EE^2))$, whose $\SL(2,\EE)$-stabilizer is  the subgroup 
$\{\left( \begin{smallmatrix} a-\alpha b& z \\  0 &  a+\alpha b \end{smallmatrix} \right) \; \vert \;  a,b \in\FF \text{ with }a^2 - \alpha^2 b^{2}=1,  z \in \EE \} \leq B_{\EE}^{+}.$ 
\end{enumerate}
\end{theorem}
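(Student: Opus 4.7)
Let $E := \left(\begin{smallmatrix} 0 & 1 \\ 0 & 0 \end{smallmatrix}\right)$. The plan is to identify the two $\SL(2,\EE)$-orbits in $X_\theta$ separately: the open orbit through $\Id_{\EE^2}$ via a direct stabilizer computation, then the boundary orbit by exhibiting $[E]$ as an accumulation point and computing its stabilizer, and finally ruling out further boundary orbits with a Cartan-decomposition argument.

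First, since $\theta(h) = h$ for $h \in \SL(2,\FF)$, one has $(gh)\theta((gh)^{-1}) = g\theta(g^{-1})$, so $\psi_\theta$ descends to $\SL(2,\EE)/H_\theta$. The $\cdot_\theta$-stabilizer in $\End(\EE^2)$ of $\Id_{\EE^2}$ is $\{g \in \SL(2,\EE) : g = \theta(g)\} = \SL(2,\FF)$, giving the first orbit. To exhibit $[E] \in X_\theta$, take $g_n := \left(\begin{smallmatrix} 1 & \alpha s_n \\ 0 & 1 \end{smallmatrix}\right)$ with $s_n \in \FF^*$ and $|s_n|_\FF \to \infty$. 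Using $\theta(\alpha) = -\alpha$ one checks directly that $\theta(g_n^{-1}) = g_n$, so $g_n\theta(g_n^{-1}) = g_n^2 = \Id_{\EE^2} + 2\alpha s_n E$, and rescaling by $(2\alpha s_n)^{-1}$ yields $[g_n\theta(g_n^{-1})] \to [E]$ in $\P(\End(\EE^2))$.

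For the $\cdot_\theta$-stabilizer of $E$, I would write $g = \left(\begin{smallmatrix} x & y \\ u & v \end{smallmatrix}\right) \in \SL(2,\EE)$ and expand
\[
g \cdot E \cdot \theta(g^{-1}) = \left(\begin{matrix} -x\theta(u) & x\theta(x) \\ -u\theta(u) & u\theta(x) \end{matrix}\right).
\]
Equality with $E$ forces $u = 0$ and $N_{\EE/\FF}(x) = x\theta(x) = 1$, combined with $\det g = 1$ this gives $v = \theta(x)$. Writing $x = a - \alpha b$ with $a, b \in \FF$, the norm condition becomes $a^2 - \alpha^2 b^2 = 1$, which is exactly the subgroup described in the statement (with $y = z \in \EE$ arbitrary).

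Finally, to rule out further orbits, I would use the Cartan decomposition $\SL(2,\EE) = K_\EE A K_\EE$ together with the $\theta$-stability of the maximal compact $K_\EE = \SL(2,\mathcal{O}_\EE)$: modulo compact factors, any sequence $g_n H_\theta$ escaping compact subsets of $\SL(2,\EE)/\SL(2,\FF)$ can be brought to $B_\EE^+$, and analysis of the resulting limits shows all accumulation points lie in the single $\SL(2,\EE)$-orbit of $[E]$, consistently with the visual boundary $\partial \tri_\EE$ emphasized in the introduction. \textbf{The main obstacle} is precisely this last step: the ``real'' and ``$\alpha$''-components (with respect to $\EE = \FF \oplus \FF\alpha$) of each matrix entry of $g_n$ contribute differently to $g_n\theta(g_n^{-1})$, and one must carefully verify that no combination of these divergences produces a limit point outside the orbit of $[E]$.
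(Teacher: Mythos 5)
Your computations for the parts you do carry out are correct and coincide with the paper's: the $\cdot_\theta$-stabilizer of $\Id_{\EE^2}$ is $\SL(2,\FF)$; the sequence $g_n=\left(\begin{smallmatrix}1&\alpha s_n\\0&1\end{smallmatrix}\right)$ indeed satisfies $\theta(g_n^{-1})=g_n$ and exhibits $[0,1;0,0]$ as an accumulation point; and your identity $g\cdot_\theta E=\left(\begin{smallmatrix}-x\theta(u)&x\theta(x)\\-u\theta(u)&u\theta(x)\end{smallmatrix}\right)$ is exactly the computation the paper uses for the stabilizer and for closedness of the boundary orbit. (One caveat you share with the paper: you impose literal matrix equality with $E$, which forces $u=0$ and $x\theta(x)=1$; projective equality in $\P(\End(\EE^2))$ would only force $u=0$ and $x\theta(x)\neq0$, i.e.\ would give all of $B_\EE^{+}$. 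Since the paper makes the same choice, I do not count it against you, but the distinction between the stabilizer of $[E]$ and of the matrix $E$ deserves a remark.)

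The genuine gap is the step you yourself label ``the main obstacle'': proving that \emph{every} accumulation point of $\psi_\theta(\SL(2,\EE))$ lies in the single $\SL(2,\EE)$-orbit of $[0,1;0,0]$, so that $X_\theta$ has exactly two orbits. Your Cartan-decomposition sketch is not a proof and has real technical problems: in the twisted action the middle factor becomes $a_nk_n\theta(k_n^{-1})\theta(a_n^{-1})$ with $k_n\theta(k_n^{-1})$ only known to lie in a compact set, and in the ramified case $\theta(\omega_\EE)=-\omega_\EE$, so the diagonal Cartan part is not $\theta$-fixed; none of the required limit analysis is performed. The paper does this work differently and concretely: it first shows there are at most five $\SL(2,\FF)$-orbits on $\partial\tri_\EE$ (Lemma \ref{lem::nr_orbits_SL(F)_SL(E)}), deduces a finite decomposition $\SL(2,\EE)=\bigsqcup_{g_i\in I}B^{-}_{\EE}g_i\SL(2,\FF)$ with the cosets indexed by $I_m$ open and jointly dense (Lemma \ref{lem::decomp_E_SL}), computes $(U^{-}_{\EE}T_{\EE}g_m)\cdot_\theta\Id_{\EE^2}$ explicitly in \eqref{equ::14}--\eqref{equ::24} to see that its only accumulation points in the big cell are the $[-\theta(b),1;-b\theta(b),b]$, $b\in\EE$, which constitute the $U^{-}_{\EE}T_{\EE}$-orbit of $[0,1;0,0]$, and then covers all of $X_\theta$ by translates of the big cell (Lemma \ref{lem::X0_int_G_H_E}). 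Some replacement for this chain of arguments is required; as written, your proposal shows only that the two claimed orbits occur in $X_\theta$, not that they exhaust it.
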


\begin{theorem}
\label{thm::all_orbits}
Let $ m \in \FF^{*}/(\FF^{*})^{2} =\{1, \omega_\FF, S, S\omega_\FF\}$. Consider $A_m:=\left( \begin{smallmatrix} 0 &1 \\ m &0 \end{smallmatrix} \right)$ and the corresponding inner $\FF$-involution of $\SL(2,\FF)$ given by  $\sigma_m:=\iota_{A_m}$ (see Section \ref{sec::all_orbits}).
There are $2$ orbits of $\SL(2,\FF)$ in $X_{\sigma_m}$ with respect to the action $\cdot_{\sigma_m}$: 
\begin{enumerate}
\item
the open  $\SL(2,\FF)$-orbit of $\Id_{\FF^2}$, whose $\SL(2,\FF)$-stabilizer is $H_{\sigma_m}$
\item
the closed $\SL(2,\FF)$-orbit of $[1,  0; 0, 0] \in \P(\End(\FF^2))$,  whose $\SL(2,\FF)$-stabilizer is  the subgroup $\{ \mu _2 \cdot \left( \begin{smallmatrix} 1& b \\ 0 &1  \end{smallmatrix} \right)\; |\; b \in \FF  \}$ of the Borel  $B^{+}\leq \SL(2,\FF)$, where $\mu _2$ is the group of $2^{nd}$ roots of unity in  $\FF$.
\end{enumerate}
\end{theorem}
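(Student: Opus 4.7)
The strategy is to mimic the direct computational approach of Theorems \ref{thm::all_orbits_diag} and \ref{thm::all_orbits_E}. Writing $g = \left(\begin{smallmatrix} a & b \\ c & d \end{smallmatrix}\right) \in \SL(2,\FF)$, the first step is to expand $g A_m g^{-1} A_m^{-1}$ explicitly, yielding
\[
\psi_{\sigma_m}(g) \,=\, \left[\left(\begin{smallmatrix} a^2 - m b^2 & bd - ac/m \\ ac - mbd & d^2 - c^2/m \end{smallmatrix}\right)\right].
\]
Two structural identities jump out: $x_{21} + m\, x_{12} = 0$, which confines the image of $\psi_{\sigma_m}$ to a $\P^2$ inside the ambient $\P(\End(\FF^2)) \cong \P^3$; and the determinantal identity $x_{11}x_{22} + m\, x_{12}^2 = (ad-bc)^2 = 1$ on affine representatives.

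Next I would identify the boundary of $\psi_{\sigma_m}(\SL(2,\FF))$ inside this $\P^2$ as the smooth plane conic $\mathcal{C}_m = \{[X:Y:W] : XW + m Y^2 = 0\}$, which contains $[1:0:0]$ and is therefore $\FF$-isomorphic to $\P^1(\FF)$, hence closed and compact. To confirm $\mathcal{C}_m$ is genuinely attained as a limit set (and not merely a set-theoretic containment), I would take sequences of Cartan form $g_n = k_1 \left(\begin{smallmatrix} \omega_\FF^{t_n} & 0 \\ 0 & \omega_\FF^{-t_n} \end{smallmatrix}\right) k_2$ with $t_n \to \infty$, use compactness of $\SL(2,\mathcal{O}_\FF)$ to extract convergent subsequences of $k_1, k_2$, and read off the projective limit; this matches the identification of the Bruhat--Tits tree boundary $\partial \tri_\FF \cong \P^1(\FF)$ emphasized throughout the paper.

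The orbit and stabilizer computations are then direct. Applying $g \cdot_{\sigma_m}$ to $[1,0;0,0]$ yields the rank-one class $\left[\left(\begin{smallmatrix} a^2 & -ac/m \\ ac & -c^2/m \end{smallmatrix}\right)\right]$, whose dependence on $(a:c) \in \P^1(\FF)$ is a Veronese-type parametrization sweeping out all of $\mathcal{C}_m$ and showing transitivity of the $\SL(2,\FF)$-action on the closed orbit. The pointwise stabilizer of $\Id_{\FF^2}$ is precisely $H_{\sigma_m} = C_{\SL(2,\FF)}(A_m)$, while the pointwise stabilizer of $[1,0;0,0]$ forces $c = 0$ and $a^2 = 1$, cutting out $\mu_2 \cdot U$. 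The main obstacle I anticipate is the case analysis for $m \in \{1, \omega_\FF, S, S\omega_\FF\}$: the nature of $H_{\sigma_m}$ and of the conjugacy class of $A_m$ itself depends on whether $m$ is a square, a uniformizer, or a non-square unit, controlling whether the corresponding torus is split, ramified, or anisotropic, and determining which ends of $\partial \tri_\FF$ are hit in the limit procedure.
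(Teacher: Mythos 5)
Your computation of $\psi_{\sigma_m}(g)=[a^2-mb^2,\, bd-ac/m;\, ac-mbd,\, d^2-c^2/m]$ and the two identities $x_{21}+mx_{12}=0$ and $x_{11}x_{22}+mx_{12}^2=(ad-bc)^2=1$ are correct, and your route is genuinely different from the paper's. The paper never writes down $\psi_{\sigma_m}$ on all of $\SL(2,\FF)$ at once; it instead decomposes $\SL(2,\FF)=\bigsqcup_i B^-g_iH_{\sigma_m}$ using finiteness of the double coset space, established geometrically by counting $H_{\sigma_m}$-orbits on $\partial\tri_\FF$ (six orbits when $\overline{m}=\overline{1}$, at most eight otherwise), and then computes $(U^-Tg_i)\cdot_{\sigma_m}\Id_{\FF^{2}}$ piece by piece, extracting accumulation points from the explicit sequences $x_{n_k}=\omega_\FF^{-n_k}$, $a_{n_k}=bx_{n_k}$. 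Your global quadric picture buys uniformity in $m$: the split into the cases $\overline{m}=\overline{1}$ and $\overline{m}\neq\overline{1}$, which occupies most of Section \ref{sec::all_orbits}, essentially disappears, so the ``main obstacle'' you anticipate at the end does not really arise. What the paper's Bruhat-cell approach buys is the big cell $X_{\sigma_m,0}$ and a template that runs in parallel for Theorems \ref{thm::all_orbits_diag} and \ref{thm::all_orbits_E}. Your closed orbit, the conic $XW+mY^2=0$ swept out by $[a^2:-ac/m:-c^2/m]$ for $(a:c)\in P^1(\FF)$, agrees exactly with the paper's set $\{[1,-z/m;z,-z^2/m]\mid z\in\FF\}\cup\{[1,0;0,0]\}$, and your Cartan-decomposition limit argument is a legitimate substitute for the paper's explicit sequences.

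Two points need to be nailed down. First, to get exactly two orbits you must exclude accumulation points off the conic that are not already in $\psi_{\sigma_m}(\SL(2,\FF))$. Since the class of $x_{11}x_{22}+mx_{12}^2$ in $\FF^{*}/(\FF^{*})^{2}$ is locally constant off the conic and equals $\overline{1}$ on the image, such a point would have a representative $D$ with $\det D=1$ arising as a limit in $\SL(2,\FF)$ of the matrices $g_nA_mg_n^{-1}A_m^{-1}$; you therefore need the twisted class $\{gA_mg^{-1}A_m^{-1}\mid g\in\SL(2,\FF)\}$ to be closed in $\GL(2,\FF)$. This is true because $A_m$ is regular semisimple, so its $\GL(2,\FF)$-conjugacy class $\{X:\operatorname{tr}X=0,\ \det X=-m\}$ is closed and splits into finitely many open-and-closed $\SL(2,\FF)$-classes, but it is an argument you must make; the paper's substitute is its sequence analysis showing that for $x_{n_k}=\omega_\FF^{n_k}$ one obtains no limits. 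Second, the stabilizer: requiring $[a^2,-ac/m;ca,-c^2/m]=[1,0;0,0]$ as a point of $\P(\End(\FF^2))$ forces only $c=0$, i.e.\ gives all of $B^{+}$; the extra condition $a^2=1$ cutting out $\mu_2\cdot U^{+}$ appears only when one stabilizes the matrix representative $(1,0;0,0)$ rather than its projective class. The paper's Corollary \ref{cor::m_case} makes the same identification, so you are consistent with the statement as written, but you should say explicitly which of the two stabilizers you are computing.
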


Comparing the stabilizers of the accumulation points $[0, 1; 0, 0]$ and $[1, 0; 0, 0]$ computed in Theorems \ref{thm::all_orbits_E} and \ref{thm::all_orbits}, respectively, with the Chabauty limits computed in \cite{CL_2} Theorems 1.4 and 1.6, respectively, the reader can notice we found the same subgroups. The general case of $\SL(n,\FF)$ will be treated in a future research project.


\subsection*{Acknowledgements}  Ciobotaru is supported by the European Union’s Horizon 2020 research and innovation programme under the Marie Sklodowska-Curie grant agreement No 754513 and The Aarhus University Research Foundation. She was partially supported by The Mathematisches Forschungsinstitut Oberwolfach (MFO, Oberwolfach Research Institute for Mathematics). She would like to thank those two institutions for the perfect working conditions they provide, and to Linus Kramer and Maneesh Thakur for very helpful discussions regarding reference \cite{Tits}. As well, Ciobotaru thanks Arielle Leitner for the wonderful collaboration on our joint paper \cite{CL_2} that gave rise to this article, for reading it and providing useful comments.

\section{The wonderful compactification of $\SL(2,\FF)$}
\label{sec::diag}

First recall  the usual wonderful compactification associated with the involution $\theta(x,y):=(y,x)$ of  $G:=\SL(2,\FF) \times \SL(2,\FF)$, having $Diag(G)=\SL(2,\FF)$ as a fixed point group of the involution $\theta$.
We compactify $\SL(2,\FF)=G/Diag(G)$  in the following way. Consider $\rho(\SL(2,\FF)) \leq \GL(\FF^2) \subseteq \End(\FF^2)$ and $\SL(2,\FF) \times \SL(2,\FF)$ acting on $\End(\FF^2)$ by $(g_1,g_2)\cdot_{\theta}A=\rho(g_1) A \rho(g_2^{-1})$, where $(g_1,g_2) \in \SL(2,\FF) \times \SL(2,\FF)$ and $A \in \End(\FF^2)$.  Then the stabilizer in $\SL(2,\FF) \times \SL(2,\FF)$ of $\Id_{\FF^2}$ is exactly $Diag(G)=\SL(2,\FF)$, and $\rho(\SL(2,\FF))= (\SL(2,\FF) \times \SL(2,\FF))\cdot_{\theta}\Id_{\FF^{2}}$. 

Next we take the map
$$\psi_{\theta} : \SL(2,\FF) \times \SL(2,\FF) \to \P(\End(\FF^2)), \; \text{ given by } (g_1,g_2) \mapsto \psi_{\theta}(g_1,g_2):=[\rho(g_1g_2^{-1})]$$
and  we want to understand the closure
 $$ X_{\theta}:=\overline{\psi_{\theta}(\SL(2,\FF) \times \SL(2,\FF))}=\overline{[(\SL(2,\FF) \times \SL(2,\FF))\cdot_{\theta}\Id_{\FF^{2}}]} =  \overline{[ \rho(\SL(2,\FF))]}\text{ in }\P(\End(\FF^2))$$
 called \textit{the wonderful compactification of $\SL(2,\FF)=G/Diag(G)$ with respect to the involution $\theta$}, where $\P(\End(\FF^2))$ is endowed with the usual topology.

Denote 
\begin{equation}
\label{equ::notations}
\begin{split}
& T:= \left \{ \begin{pmatrix} x & 0 \\ 0 &x^{-1}  \end{pmatrix} | \; x \in \FF^* \right\}  \qquad  U^{+}:= \left \{ \begin{pmatrix} 1 & y \\ 0 &1  \end{pmatrix} |  \; y \in \FF \right\} \qquad  U^{-}:= \left \{ \begin{pmatrix} 1 & 0 \\ y &1  \end{pmatrix} |  \; y \in \FF \right\} \\
&B^{+}:= \left \{ \begin{pmatrix} x & y \\ 0 &x^{-1}  \end{pmatrix} |  \; y \in \FF, x \in \FF^*  \right\} \qquad B^{-}= \left \{ \begin{pmatrix} x & 0 \\ y &x^{-1}  \end{pmatrix} |  \; y \in \FF, x \in \FF^*  \right\}, \\
\end{split}
\end{equation}
where $T$ is the maximal tori of $\SL(2,\FF)$, $U^{+}$ is the unipotent radical of the Borel subgroup $B^{+}$, and $U^{-}$ is the unipotent radical of the opposite Borel  $B^{-}$. By the definition of $X_\theta$:

\begin{lemma}
\label{lem::dense_orbit}
The  $\SL(2,\FF) \times \SL(2,\FF)$ orbit of $\Id_{\FF^2}$ in $\P(\End(\FF^2))$ with respect to the action $\cdot_{\theta}$  is dense in $X_\theta$.
\end{lemma}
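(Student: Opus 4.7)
The plan is essentially to unwind the definitions and observe that the statement is nearly tautological given how $X_\theta$ was just introduced. First I would compute the $(\SL(2,\FF)\times\SL(2,\FF))$-orbit of $\Id_{\FF^2}$ under $\cdot_\theta$ explicitly: for any $(g_1,g_2) \in \SL(2,\FF)\times\SL(2,\FF)$, we have $(g_1,g_2)\cdot_\theta \Id_{\FF^2} = \rho(g_1)\rho(g_2^{-1}) = \rho(g_1 g_2^{-1})$, so setting $g_2 = e$ shows every element of $\rho(\SL(2,\FF))$ lies in the orbit, while conversely every element of the orbit lies in $\rho(\SL(2,\FF))$ because $\SL(2,\FF)$ is a group. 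Hence the orbit equals $\rho(\SL(2,\FF)) \subset \End(\FF^2)$.

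Next I would pass to projective space via the quotient map $\End(\FF^2)\setminus\{0\} \to \P(\End(\FF^2))$, which is continuous and surjective onto $\P(\End(\FF^2))$. This gives that the projective orbit $[(\SL(2,\FF)\times\SL(2,\FF))\cdot_\theta \Id_{\FF^2}] = [\rho(\SL(2,\FF))] = \psi_\theta(\SL(2,\FF)\times\SL(2,\FF))$, using exactly the formula for $\psi_\theta$ recorded just above.

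Finally, I would invoke the definition of $X_\theta$ itself, namely
\[
X_\theta := \overline{[(\SL(2,\FF)\times\SL(2,\FF))\cdot_\theta \Id_{\FF^2}]} = \overline{\psi_\theta(\SL(2,\FF)\times\SL(2,\FF))} \subset \P(\End(\FF^2)),
\]
so the closure of the orbit is $X_\theta$ by definition, which is exactly the assertion of the lemma. There is no real obstacle here; the only thing to be slightly careful about is to check that the projection map $\End(\FF^2)\setminus\{0\} \to \P(\End(\FF^2))$ is well-defined on $\rho(\SL(2,\FF))$, which is automatic since $\rho(g)\in \GL(\FF^2)$ is nonzero for every $g\in \SL(2,\FF)$.
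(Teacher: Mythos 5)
Your proof is correct and matches the paper's reasoning: the paper states this lemma with no proof beyond the remark ``By the definition of $X_\theta$,'' since $X_\theta$ is literally defined as the closure of the orbit $[(\SL(2,\FF)\times\SL(2,\FF))\cdot_\theta \Id_{\FF^2}]$ in $\P(\End(\FF^2))$. Your unwinding of the definitions (orbit equals $\rho(\SL(2,\FF))$, then pass to projective space, then invoke the definition of the closure) is exactly the intended, essentially tautological argument.
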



In order to prove  Theorem \ref{thm::all_orbits_diag} we need the following notation. Let $$\P_{0}:=\{x =[1,  x_2; x_3, x_4] \in \P(\End(\FF^2)) \}$$ and notice this is open in $\P(\End(\FF^2))$. Denote $X_{\theta, 0}:= \P_{0} \cap X_{\theta}$ and notice this is again open in $X_\theta$. The set  $X_{\theta, 0}$ is called the big cell in  $X_\theta$.

Recall that $B^{+}=T U^{+}$ as well as the following decomposition in $\SL(2,\FF)$
\begin{equation}
\label{equ::decomp_unipotent_rad}
\begin{pmatrix}a&b\\c&d\end{pmatrix} =  \begin{pmatrix}1&0\\c/a&1\end{pmatrix}  \begin{pmatrix}a&0\\0&a^{-1}\end{pmatrix}  \begin{pmatrix}1&b/a\\0&1\end{pmatrix}, \text{ for } a\neq 0.
\end{equation}
From here it is easy to verify the following facts. 

\begin{lemma}
\label{lem::dense_utu}
The set  $U^{-}TU^{+}$ is dense in $\SL(2,\FF)$, and in particular $[(U^{-}T \times U^{+})\cdot_{\theta}\Id_{\FF^{2}}]$ is dense in $\psi_{\theta}(\SL(2,\FF) \times \SL(2,\FF)) = [\rho(\SL(2,\FF))]$, and so in $\overline{ [\rho(\SL(2,\FF))]}$.
\end{lemma}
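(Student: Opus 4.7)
The plan is to first use the matrix decomposition (\ref{equ::decomp_unipotent_rad}) to identify $U^{-}TU^{+}$ with the ``big open cell'' $\Omega:=\{g\in \SL(2,\FF) \mid g_{11}\neq 0\}$, and then show that $\Omega$ is open and dense in $\SL(2,\FF)$ in the $p$-adic topology. The second (``in particular'') assertion will then follow by continuity of the map $g\mapsto [\rho(g)]$ combined with the identity $\psi_\theta(g_1,g_2)=[\rho(g_1 g_2^{-1})]$.

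For the first step, a direct multiplication shows that any element of $U^{-}TU^{+}$ has $(1,1)$-entry equal to some $x\in \FF^{*}$, hence lies in $\Omega$; conversely, (\ref{equ::decomp_unipotent_rad}) gives an explicit factorization of any $g=\left( \begin{smallmatrix} a & b \\ c & d \end{smallmatrix} \right)\in \Omega$ as a product in $U^{-}TU^{+}$. Thus $U^{-}TU^{+}=\Omega$, which is open because the $(1,1)$-entry is a continuous function on $\SL(2,\FF)$.

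For density, the complement $\SL(2,\FF)\setminus \Omega$ consists of matrices $g=\left( \begin{smallmatrix} 0 & b \\ c & d \end{smallmatrix} \right)$ with $bc=-1$. Given such a $g$, I would exhibit the sequence
\[
g_{n}:=\begin{pmatrix} \omega_{\FF}^{n} & b \\ (\omega_{\FF}^{n}d-1)/b & d \end{pmatrix},
\]
verify that $\det g_n = 1$ and $g_n \in \Omega$, and check that $g_n\to g$ entrywise as $n\to \infty$: indeed $\omega_{\FF}^{n}\to 0$ and $(\omega_{\FF}^{n}d-1)/b\to -1/b=c$ by the relation $bc=-1$. This proves $\Omega$ is dense.

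Finally, since $(U^{+})^{-1}=U^{+}$ as a subgroup, one has
\[
\psi_\theta(U^{-}T\times U^{+})=\bigl[\rho\bigl((U^{-}T)(U^{+})^{-1}\bigr)\bigr]=[\rho(U^{-}TU^{+})].
\]
Continuity of $g\mapsto [\rho(g)]$ from $\SL(2,\FF)$ into $\P(\End(\FF^{2}))$ transfers the density of $U^{-}TU^{+}$ in $\SL(2,\FF)$ to density of $[\rho(U^{-}TU^{+})]$ in $[\rho(\SL(2,\FF))]=\psi_\theta(\SL(2,\FF)\times \SL(2,\FF))$, and hence in $\overline{[\rho(\SL(2,\FF))]}=X_\theta$ by Lemma \ref{lem::dense_orbit}. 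There is no real obstacle in the argument; the only subtlety is to produce a perturbation compatible with the $\SL(2)$ determinant constraint, which the explicit sequence $g_n$ resolves.
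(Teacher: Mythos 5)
Your proposal is correct and takes essentially the same route as the paper: the paper's proof also identifies $U^{-}TU^{+}$ with the matrices of nonzero $(1,1)$-entry via the decomposition (\ref{equ::decomp_unipotent_rad}) and obtains density by "an easy approximation of matrices having $a=0$", which is precisely what your explicit sequence $g_n$ carries out, after which the second assertion follows from continuity just as you argue. Your write-up simply makes the approximation and the determinant check explicit.
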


\begin{proof}
The first part of the lemma follows from an easy approximation of matrices having $a=0$ using the decomposition given in (\ref{equ::decomp_unipotent_rad}). The rest of the lemma follows directly from the density of $U^{-}TU^{+}$ in $\SL(2,\FF)$.
\end{proof}

\begin{lemma}
\label{lem::X0_int_G}
We have that 
$$X_{\theta,0} \cap [\rho(\SL(2,\FF))]= [(U^{-}T \times U^{+})\cdot_{\theta}\Id_{\FF^{2}}]$$
$$X_{\theta}= \bigcup\limits_{(g_1,g_2) \in \SL(2,\FF) \times \SL(2,\FF)} (g_1,g_2)\cdot_{\theta}X_{\theta,0}$$
$$X_{\theta,0} = \overline{[ (U^{-} T \times U^{+})\cdot_{\theta}\Id_{\FF^{2}}]} \cap \P_{0}.$$
\end{lemma}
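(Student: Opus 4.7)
The proof breaks into the three equalities, all three following from the factorization (\ref{equ::decomp_unipotent_rad}), Lemma \ref{lem::dense_utu}, and continuity of the $\cdot_\theta$ action.

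For the first equality I would work directly at the level of $\SL(2,\FF)$ under the identification $[(g_1,g_2)\cdot_\theta\Id_{\FF^2}]=[g_1g_2^{-1}]$. The inclusion $\supseteq$ reduces to the claim that every product $u^- t u^+ \in U^-TU^+$ has nonzero $(1,1)$-entry (transparent, since neither $u^-$ nor $u^+$ disturbs the top-left pivot of $t$), hence represents a class in $\P_0$. For $\subseteq$, any $[g]\in[\rho(\SL(2,\FF))]\cap\P_0$ has a representative $g\in\SL(2,\FF)$ with $g_{11}\neq 0$, and (\ref{equ::decomp_unipotent_rad}) exhibits it as $u^-tu^+$, which in turn equals $(u^-t,(u^+)^{-1})\cdot_\theta\Id_{\FF^2}$ since $(U^+)^{-1}=U^+$.

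For the second equality the inclusion $\supseteq$ is automatic: $\psi_\theta(\SL(2,\FF)\times\SL(2,\FF))$ is $\cdot_\theta$-stable by direct inspection, and the action is continuous on $\P(\End(\FF^2))$, so its closure $X_\theta$ inherits stability. For $\subseteq$, given $[D]\in X_\theta$ pick any nonzero entry $D_{ij}$ of a representative; using the Weyl element $w=\left(\begin{smallmatrix}0&-1\\1&0\end{smallmatrix}\right)\in\SL(2,\FF)$, left-multiplication by $w$ permutes rows (up to sign) and right-multiplication by $w^{-1}$ permutes columns, so for an appropriate pair $(g_1,g_2)\in\{\Id,w\}\times\{\Id,w^{-1}\}$ the matrix $g_1Dg_2^{-1}$ has nonzero $(1,1)$-entry. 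This places $(g_1,g_2)\cdot_\theta[D]$ inside $X_{\theta,0}$, whence $[D]\in(g_1^{-1},g_2^{-1})\cdot_\theta X_{\theta,0}$.

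The third equality is essentially a direct corollary of Lemma \ref{lem::dense_utu}: density of $[(U^-T\times U^+)\cdot_\theta\Id_{\FF^2}]$ in $[\rho(\SL(2,\FF))]$ together with the definition $X_\theta=\overline{[\rho(\SL(2,\FF))]}$ yields $\overline{[(U^-T\times U^+)\cdot_\theta\Id_{\FF^2}]}=X_\theta$; intersecting with the open set $\P_0$ then gives $X_{\theta,0}$ on the right-hand side. I do not anticipate a genuine obstacle here; the only point to keep straight is the accounting of inverses in the $\cdot_\theta$ action and the fact that the closures are all taken inside the compact space $\P(\End(\FF^2))$.
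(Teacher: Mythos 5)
Your proposal is correct and follows essentially the same route as the paper: the first and third equalities come from the decomposition (\ref{equ::decomp_unipotent_rad}) and the density statement of Lemma \ref{lem::dense_utu}, and the second from moving a nonzero entry of a representative into the $(1,1)$ position. The only difference is that you make explicit, via the Weyl element $w$, the ``easy calculations'' the paper leaves implicit for the case $x_1=0$; this is a welcome clarification but not a different argument.
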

\begin{proof}
By the decomposition \ref{equ::decomp_unipotent_rad} it is clear that 
$$ [(U^{-}T \times U^{+})\cdot_{\theta}\Id_{\FF^{2}}] = X_{\theta,0} \cap [\rho(\SL(2,\FF))]=  \P_{0} \cap  [\rho(\SL(2,\FF))].$$
Since $\overline{[ (U^{-}T \times U^{+})\cdot_{\theta}\Id_{\FF^{2}}]} =  X_{\theta}$ we immediately have $X_{\theta,0} =\overline{[ (U^{-}T \times U^{+})\cdot_{\theta}\Id_{\FF^{2}}]} \cap  \P_{0}$, i.e. the closure of the set $[ (U^{-}T \times U^{+})\cdot_{\theta}\Id_{\FF^{2}}]$ in $\P_0$.

Let $C \in X_{\theta}$. As $C \in \P(\End(\FF^2))$ we have $C = [x_1, x_2; x_3,x_4]$. If $x_1 \neq 0$ then $C \in X_{\theta,0}$ and we are done. If $x_1 =0$, then by some easy calculations one can arrange for some $(g_1,g_2) \in \SL(2,\FF) \times \SL(2,\FF)$ such that $\rho(g_1) C \rho(g_2^{-1}) \in \P_0$. Then we are done.
\end{proof}

\begin{proof}[Proof of Theorem \ref{thm::all_orbits_diag}]

By Lemma \ref{lem::X0_int_G} it is enough to compute the set $(U^{-}T \times U^{+})\cdot_{\theta}\Id_{\FF^{2}}$, and then to precisely compute its closure $\overline{[ (U^{-}T \times U^{+})\cdot_{\theta}\Id_{\FF^{2}}]} \cap \P_{0} $. 
We have
\begin{equation}
\begin{split}
(U^{-}T \times U^{+})\cdot_{\theta}\Id_{\FF^{2}}&= \left \{ \begin{pmatrix} 1 & 0 \\ a & 1  \end{pmatrix}\begin{pmatrix} x & 0 \\ 0 &x^{-1}  \end{pmatrix}  \begin{pmatrix} 1 & -b \\ 0 & 1  \end{pmatrix}|\; x \in \FF^*, a,b\in \FF \right\}\\
& \left \{ \begin{pmatrix} x & -xb \\ ax & -axb+x^{-1}  \end{pmatrix}| \; x \in \FF^*, a,b\in \FF \right\} \Rightarrow \\
& \Rightarrow [ (U^{-}T \times U^{+})\cdot_{\theta}\Id_{\FF^{2}}] = \{[1,-b;a,-ab+x^{-2}] \; | \; x \in \FF^*, a,b\in \FF \}. \\
\end{split}
\end{equation}
Then the only accumulation points of the set $[ (U^{-}T \times U^{+})\cdot_{\theta}\Id_{\FF^{2}}]$  in $\P_0$ are the points of the form $[1,-b;a,-ab]$, for any $a,b\in \FF$. Moreover, it is very easy to see that the only accumulation point of the set $[T\cdot_{\theta}\Id_{\FF^{2}}]$ in $\P_0$ is just $[1,0;0,0]$.
Notice, an element $[1,-b;a,-ab]$ respresents a matrix in $\P(\End(\FF^2))$ of determinant zero. As well, $\{[1,-b;a,-ab]\; | \; a,b\in \FF \}$ is the $(U^{-}T \times U^{+})$-orbit of the element $[1,0;0,0] \in \P(\End(\FF^2))$.  From the above calculation and since the action $\cdot_{\theta}$ is continuous, it is immediate that the $\SL(2,\FF) \times \SL(2,\FF)$-orbit of $\Id_{\FF^{2}}$ is open. It is also clear that the $\SL(2,\FF) \times \SL(2,\FF)$-orbit of $[1,0;0,0]$ in $\P(\End(\FF^2))$ is closed and, by in easy computation, the stabilizer of $[1,0;0,0]$ in  $\SL(2,\FF) \times \SL(2,\FF)$ is exactly $B^{+} \times B^{-}$.
The theorem follows.
\end{proof}
 
\section{The wonderful compactification of $\SL(2, \EE)/\SL(2, \FF)$ for quadratic $\EE/\FF$}
\label{sec::all_orbits_E}

Let $\FF$ be a finite field-extension of $\QQ_p$ and $\EE=\FF(\alpha)$ be any quadratic extension of $\FF$.  Denote by $\theta :\EE  \to \EE$ the `conjugation' in $\EE$ with respect to $\FF$,  $\theta( a + \alpha b): = a - \alpha b$, for any $a,b\in \FF$. This map $\theta$ induces an abstract involution of $\SL(2,\EE)$ given by
$$\left( \begin{smallmatrix} x &y \\  z&t\end{smallmatrix} \right) \in \SL(2,\EE) \mapsto \theta(\left( \begin{smallmatrix} x &y \\  z&t\end{smallmatrix} \right)) := \left( \begin{smallmatrix} \theta(x) & \theta(y) \\ \theta(z)& \theta(t)\end{smallmatrix} \right)$$
whose fixed point group $H_{\theta}:=\{g \in \SL(2,\EE) \;  \vert \; \theta(g)=g\}$ is  $\SL(2,\FF)$.


The stabiliser in  $\SL(2, \FF)$, resp. $\SL(2, \EE)$, of the endpoint  $\big[ \begin{smallmatrix} 1 \\0 \end{smallmatrix}\big] $ is the Borel subgroup 
$$B_\FF^{+}:=\{ \left( \begin{smallmatrix} a &b \\ 0 &a^{-1}\end{smallmatrix} \right) \;|\; b\in \FF, a \in \FF^{\times} \}, \qquad \text{resp. } B_\EE^{+}:=\{ \left( \begin{smallmatrix} x &y \\ 0 &x^{-1}\end{smallmatrix} \right) \;|\;  y\in \EE, x \in \EE^{\times} \}. $$
The stabiliser in  $\SL(2,\FF)$, resp. $\SL(2, \EE)$, of the endpoint  $\big[ \begin{smallmatrix} 0 \\1 \end{smallmatrix}\big] $  is the opposite Borel subgroup 
$$B_\FF^{-}:=\{ \left( \begin{smallmatrix} a &0 \\  b&a^{-1}\end{smallmatrix} \right)\;|\;  b\in \FF, a \in F^{\times} \} , \qquad \text{resp. }  B_\EE^{-}:= \{ \left( \begin{smallmatrix} x &0\\ y &x^{-1}\end{smallmatrix} \right) \;|\;  y\in \EE, x \in E^{\times} \}. $$

Let  
\[ T_\EE:= \left \{ \begin{pmatrix} x & 0 \\ 0 &x^{-1}  \end{pmatrix} | \; x \in \EE^* \right\}  \qquad  U_{\EE}:= \left \{ \begin{pmatrix} 1 & y \\ 0 &1  \end{pmatrix} |  \; y \in \EE \right\} \qquad  U^{-}_{\EE}:= \left \{ \begin{pmatrix} 1 & 0 \\ y &1  \end{pmatrix} |  \; y \in \EE \right\}. \]

Given the abstract involution $\theta : \SL(2,\EE)  \to \SL(2,\EE)$, we consider  the action of $\SL(2,\EE)$ on $\End(\EE^2)$ given by $g\cdot_{\theta} D := \rho(g) D \rho(\theta(g^{-1}))$, for every $g \in \SL(2,\EE)$ and $D \in \End(\EE^2)$. The stabilizer in $\SL(2,\EE)$, with respect to the action $\cdot_\theta$, of the element $\Id_{\EE^2}$ is the subgroup $\SL(2,\FF)$. Then $\SL(2,\EE) \cdot_{\theta} \Id_{\EE^2} \cong \SL(2,\EE)/\SL(2,\FF)$ and we consider the continuous map 
$$\psi_{\theta} : \SL(2,\EE) \to \P(\End(\EE^2)), \; \text{ given by } g \mapsto \psi_{\theta}(g):=[\rho(g)\rho(\theta(g^{-1}))]=[\rho(g \theta(g^{-1}))].$$

We want to understand the following closure with respect to the involution $\theta$ and the map $\psi_{\theta}$:
$$X_\theta:= \overline{\psi_{\theta}(\SL(2,\EE))}=\overline{[\SL(2,\EE) \cdot_{\theta} \Id_{\EE^2}]}= \overline{[\SL(2,\EE)/\SL(2,\FF)]} \text{ in }\P(\End(\EE^2)).$$

\begin{definition}
We regard the above closure $X_\theta$ as a compactification of $ \SL(2,\EE)/\SL(2,\FF)$ and call it the \textit{the wonderful compactification of $\SL(2,\EE)/\SL(2,\FF)$ with respect to the involution $\theta$}.
\end{definition}

Just from the definition, we have the following trivial lemma:

\begin{lemma}
\label{lem::dense_orbit_E}
The $\SL(2,\EE)$-orbit of $\Id_{\EE^2}$  in $\P(\End(\EE^2))$, with respect to the action $\cdot_\theta$,  is dense in  $X_{\theta}$.
\end{lemma}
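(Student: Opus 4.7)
The plan is to observe that this lemma is essentially a tautology unfolding the definitions already established in the excerpt. I will not need any auxiliary geometric or group-theoretic input; everything follows from how $X_\theta$, $\psi_\theta$ and the action $\cdot_\theta$ were set up.

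First I would note that for every $g \in \SL(2,\EE)$ one has
\[
g \cdot_\theta \Id_{\EE^2} \;=\; \rho(g)\,\Id_{\EE^2}\,\rho(\theta(g^{-1})) \;=\; \rho(g\,\theta(g^{-1})),
\]
so passing to projective classes gives $[g\cdot_\theta \Id_{\EE^2}] = \psi_\theta(g)$. Taking the union over $g \in \SL(2,\EE)$ yields the equality of sets
\[
[\SL(2,\EE)\cdot_\theta \Id_{\EE^2}] \;=\; \psi_\theta(\SL(2,\EE))
\]
inside $\P(\End(\EE^2))$.

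Next I would simply apply the definition of $X_\theta$, namely
\[
X_\theta \;=\; \overline{\psi_\theta(\SL(2,\EE))} \;=\; \overline{[\SL(2,\EE)\cdot_\theta \Id_{\EE^2}]},
\]
which says exactly that $[\SL(2,\EE)\cdot_\theta \Id_{\EE^2}]$ is dense in $X_\theta$. Since this orbit is contained in $X_\theta$ by construction, this concludes the proof.

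There is no real obstacle here: the only thing to verify is the identity $g\cdot_\theta \Id_{\EE^2} = \rho(g\,\theta(g^{-1}))$, which is immediate from the definition of the action $\cdot_\theta$ and the fact that $\rho$ is the tautological representation $g \mapsto g$. The lemma is recorded separately (rather than absorbed into the definition) only because it will be invoked repeatedly in the computations that follow.
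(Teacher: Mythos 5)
Your proof is correct and matches the paper's treatment: the paper states this lemma without proof, calling it trivial precisely because $X_\theta$ is \emph{defined} as the closure $\overline{[\SL(2,\EE)\cdot_\theta \Id_{\EE^2}]}=\overline{\psi_\theta(\SL(2,\EE))}$, so density is immediate. Your unpacking of the identity $g\cdot_\theta \Id_{\EE^2}=\rho(g\,\theta(g^{-1}))$ is exactly the intended justification.
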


In order to compute the wonderful compactification of $\SL(2,\EE)/\SL(2,\FF)$ we first need to find the number of orbits of $\SL(2,\FF)$ on the boundary $\partial \tri_{\EE}$.

\begin{lemma}
\label{lem::nr_orbits_SL(F)_SL(E)}
There are at most $5$ $\SL(2,\FF)$-orbits on the boundary $\partial \tri_{\EE}\cong P^1\EE=\EE \cup \{\infty\}$: 
\begin{enumerate}
\item 
the $\SL(2,\FF)$-orbit of  $\big[ \begin{smallmatrix} 1 \\ 0 \end{smallmatrix} \big]$ which is closed
\item
for each $m \in \FF^{*}/(\FF^{*})^{2}$, the $\SL(2,\FF)$-orbit of  $\big[ \begin{smallmatrix} 1 \\ m\alpha \end{smallmatrix} \big]$ is open and might coincide with  the orbit of a different $m$.
\end{enumerate}
\end{lemma}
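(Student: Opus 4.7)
My plan is to work in the affine model $P^1(\EE) = \EE \cup \{\infty\}$, on which $\SL(2,\FF)$ acts by fractional linear transformations, and to reduce every point outside the $\FF$-rational locus to a specific normal form. First I would identify the orbit of $[1:0]$: the paper already records that its stabilizer in $\SL(2,\FF)$ is the upper Borel $B_\FF^{+}$, so the orbit is $\SL(2,\FF)/B_\FF^{+} \cong P^1(\FF) = \FF \cup \{\infty\}$. This set is closed in $P^1(\EE)$ because $\FF$ is closed in $\EE$ (it is the fixed set of the continuous involution $\theta$, equivalently a complete subfield of $\EE$).

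Next, for any point $[1:z]$ with $z \in \EE \setminus \FF$, I would write $z = u + v\alpha$ with $u, v \in \FF$ and $v \neq 0$, and transport it into normal form in two steps. Acting by $\left(\begin{smallmatrix} 1 & 0 \\ -u & 1 \end{smallmatrix}\right) \in \SL(2,\FF)$ sends $[1:z]$ to $[1:v\alpha]$, and then acting by $\left(\begin{smallmatrix} \lambda & 0 \\ 0 & \lambda^{-1} \end{smallmatrix}\right) \in \SL(2,\FF)$ sends $[1:v\alpha]$ to $[1:\lambda^{-2}v\alpha]$. Choosing $\lambda \in \FF^*$ appropriately reduces $v$ to a chosen representative $m$ of its class in $\FF^*/(\FF^*)^2 = \{1, \omega_\FF, S, S\omega_\FF\}$, so $[1:z]$ lies in the same $\SL(2,\FF)$-orbit as $[1:m\alpha]$ for some such $m$. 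Together with the orbit of $[1:0]$, this yields at most $1 + 4 = 5$ orbits on $\partial \tri_\EE$.

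For the openness claim, the computation above shows that the $\SL(2,\FF)$-orbit of $[1:m\alpha]$ contains $\{[1: c + m\mu\alpha] \mid c \in \FF,\; \mu \in (\FF^*)^2\}$, which in the affine chart $\EE \subset P^1(\EE)$ is the subset $\FF + m(\FF^*)^2\alpha$. Since $p \neq 2$, Hensel's lemma implies that $(\FF^*)^2$ is an open subgroup of $\FF^*$ of index $4$, so this subset is an open neighborhood of $[1:m\alpha]$ contained in its orbit. Translating by arbitrary elements of $\SL(2,\FF)$ shows that every point of the orbit admits such a neighborhood inside the orbit, so the orbit is open in $P^1(\EE)$.

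I do not expect any real obstacle: the reduction step is routine matrix manipulation, and openness reduces to Hensel's lemma for squares. The only point worth flagging is that the four orbits indexed by $\FF^*/(\FF^*)^2$ need not be distinct --- by local class field theory the sharp count of $\SL(2,\FF)$-orbits on $\EE \setminus \FF$ is $|\FF^*/N_{\EE/\FF}(\EE^*)| = 2$ --- which is precisely why the lemma states only an upper bound of $5$ together with the caveat that orbits of different $m$ may coincide.
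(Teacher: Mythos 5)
Your proof is correct and follows essentially the same route as the paper: the paper acts by the lower-triangular subgroup $B=\{\left(\begin{smallmatrix} d^{-1}&0\\ c&d\end{smallmatrix}\right)\}$ to send $\big[\begin{smallmatrix}1\\ m\alpha\end{smallmatrix}\big]$ to $\big[\begin{smallmatrix}1\\ cd+d^{2}m\alpha\end{smallmatrix}\big]$, which is exactly your two-step reduction (unipotent translation of the $\FF$-part followed by square scaling of the $\alpha$-part) packaged into one subgroup, and the openness argument via openness of $(\FF^*)^2$ in $\FF^*$ is the same, with your appeal to Hensel's lemma merely making explicit what the paper calls ``easy to notice.'' Your closing aside on the sharp count via $\FF^*/N_{\EE/\FF}(\EE^*)$ is a correct bonus not present in the paper.
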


\begin{proof}
It is clear that the $\SL(2,\FF)$-orbit of  $\big [ \begin{smallmatrix} 1 \\ 0 \end{smallmatrix} \big]$ is closed in $\partial \tri_{\EE}$ with respect to the cone topology on $\partial \tri_{\EE}$ (see \cite{FTN} page 4).

Consider the subgroup  $B:=\{ \left( \begin{smallmatrix} d^{-1} &0 \\ c & d\end{smallmatrix} \right) \; \vert \; c \in \FF, \; d \in \FF^{*}\}$ in $\SL(2,\FF)$. Then the image of $\big[\begin{smallmatrix}  1 \\ m\alpha \end{smallmatrix} \big] $ under $B$ is $\big[\begin{smallmatrix} 1 \\ cd + d^{2}m\alpha \end{smallmatrix} \big] $, which covers the entire $\partial \tri_{\EE}- \partial \tri_\FF$ when $m$ takes all values from $\FF^{*}/(\FF^{*})^{2}$. Moreover, for each $m \in \FF^{*}/(\FF^{*})^{2}$, it is easy to notice that the set $\{ \big[\begin{smallmatrix} 1 \\ cd + d^{2}m\alpha \end{smallmatrix} \big] \; \vert\; c \in \mathcal{O}_\FF, d \in \mathcal{O}_\FF^{*}\}$ is open in $\partial \tri_{\EE}$ with respect to the cone topology on $\partial \tri_{\EE}$. Thus the $\SL(2,\FF)$-orbit of  $\big[ \begin{smallmatrix} 1 \\ m\alpha \end{smallmatrix} \big]$ is open, for each $m \in \FF^{*}/(\FF^{*})^{2}$.
\end{proof}


Consider the matrices in $\SL(2,\EE)=\SL(2,\FF(\alpha))$
$$g_1:=\Id_{\EE^2},  \qquad g_m:= \left(\begin{smallmatrix} 1 & -\frac{1}{m\alpha} \\ 0 &1 \end{smallmatrix} \right) \text{ for } m \in \FF^{*}/(\FF^{*})^{2}$$
such that 
 $$g_m\left(\big[ \begin{smallmatrix} 1 \\ m\alpha \end{smallmatrix} \big]\right)= \big[ \begin{smallmatrix} 0 \\1 \end{smallmatrix} \big].$$

Let $I :=\{ \Id_{\FF^2}, g_m \; \vert \; m \in \FF^{*}/(\FF^{*})^{2} \}$, and $I_m :=\{g_m \; \vert \; m \in \FF^{*}/(\FF^{*})^{2} \}$.
\begin{lemma}
\label{lem::decomp_E_SL}
With the above notation we have that:
\begin{enumerate}
\item
$\SL(2,\EE)=  \bigsqcup\limits_{g_i \in I} B^{-}_{\EE}g_i \SL(2,\FF)$
\item
for any $g_m \in I_m$,  the set $B^{-}_{\EE}g_m \SL(2,\FF) = U^{-}_{\EE}T_{\EE} g_m \SL(2,\FF)$ is open in $\SL(2,\EE)$, and moreover the set $\bigsqcup\limits_{g_m \in I_m} U^{-}_{\EE}T_{\EE}g_m  \SL(2,\FF)$ is dense in $\SL(2,\EE)$
\item
$\psi_{\theta}(\bigsqcup\limits_{g_m \in I_m} U^{-}_{\EE}T_{\EE} g_m )$ is open and dense in $\psi_{\theta}(\SL(2,\EE))=[\SL(2,\EE)  \cdot_{\theta} \Id_{\EE^2}]$, and so  $$\overline{\psi_{\theta}(\bigsqcup\limits_{g_m \in I_m} U^{-}_{\EE} T_{\EE} g_m)}=\overline{[\SL(2,\EE) \cdot_{\theta} \Id_{\EE^2}]}.$$
\end{enumerate}
\end{lemma}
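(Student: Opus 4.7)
The plan is to identify $B^-_\EE \backslash \SL(2,\EE)$ with $P^1(\EE)=\partial \tri_\EE$ so that the $\SL(2,\FF)$-orbit structure established in Lemma~\ref{lem::nr_orbits_SL(F)_SL(E)} transports to a double coset decomposition of $\SL(2,\EE)$, and then to push this decomposition through the continuous map $\psi_\theta$.

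For part (1), I would use that $B^-_\EE$ is the stabilizer in $\SL(2,\EE)$ of $\big[\begin{smallmatrix}0\\1\end{smallmatrix}\big]\in P^1(\EE)$, so the assignment $B^-_\EE g \mapsto g^{-1}\big[\begin{smallmatrix}0\\1\end{smallmatrix}\big]$ yields a bijection between right $B^-_\EE$-cosets and $P^1(\EE)$. Under this bijection the $\SL(2,\FF)$-action by right multiplication on cosets corresponds to an $\SL(2,\FF)$-action on $P^1(\EE)$ that has the same orbits as the standard one, so double cosets $B^-_\EE \backslash \SL(2,\EE)/\SL(2,\FF)$ are in bijection with $\SL(2,\FF)$-orbits on $P^1(\EE)$. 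Direct computation gives $g_1^{-1}\big[\begin{smallmatrix}0\\1\end{smallmatrix}\big]=\big[\begin{smallmatrix}0\\1\end{smallmatrix}\big]\in P^1(\FF)$, which lies in the closed orbit, and $g_m^{-1}\big[\begin{smallmatrix}0\\1\end{smallmatrix}\big]=\big[\begin{smallmatrix}1\\m\alpha\end{smallmatrix}\big]$, a representative of the $m$-th open orbit from Lemma~\ref{lem::nr_orbits_SL(F)_SL(E)}. The claimed disjoint union decomposition follows.

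For part (2), the identity $B^-_\EE g_m \SL(2,\FF)=U^-_\EE T_\EE g_m \SL(2,\FF)$ is the standard direct-product decomposition $B^-_\EE=U^-_\EE T_\EE$ of the Borel. Openness is immediate from the fact that the quotient map $q:\SL(2,\EE)\to B^-_\EE\backslash \SL(2,\EE)\cong P^1(\EE)$ is continuous and open: the set $B^-_\EE g_m \SL(2,\FF)$ is the preimage under $q$ of the open $\SL(2,\FF)$-orbit of $\big[\begin{smallmatrix}1\\m\alpha\end{smallmatrix}\big]$ supplied by Lemma~\ref{lem::nr_orbits_SL(F)_SL(E)}. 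For density, the complement of $\bigsqcup_{g_m\in I_m} B^-_\EE g_m \SL(2,\FF)$ is $B^-_\EE \SL(2,\FF)=q^{-1}(P^1(\FF))$; since $\FF$ is a proper closed subfield of the two-dimensional $\FF$-vector space $\EE$ it is nowhere dense in $\EE$, so $P^1(\FF)$ has empty interior in $P^1(\EE)$ and the complement has empty interior in $\SL(2,\EE)$.

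For part (3), since $\theta$ fixes $\SL(2,\FF)$ pointwise, a direct computation gives $\psi_\theta(gh)=\psi_\theta(g)$ for every $h\in \SL(2,\FF)$, so $\psi_\theta\bigl(\bigsqcup_{g_m\in I_m} U^-_\EE T_\EE g_m\bigr)=\psi_\theta\bigl(\bigsqcup_{g_m\in I_m} B^-_\EE g_m \SL(2,\FF)\bigr)$, namely the image under the continuous map $\psi_\theta$ of the open dense set from part (2). Density in $\psi_\theta(\SL(2,\EE))$, and hence in its closure $X_\theta$, is then immediate from $\overline{\psi_\theta(U)}\supseteq \psi_\theta(\overline{U})$ applied to this open dense $U$. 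The main obstacle I expect is openness of the image in $\psi_\theta(\SL(2,\EE))$: this requires that the induced orbit map $\SL(2,\EE)/\SL(2,\FF)\to \psi_\theta(\SL(2,\EE))$ be a homeomorphism onto its image, which follows from the standard homogeneous-space theorem (since $\SL(2,\EE)$ is $\sigma$-compact and locally compact, the action $\cdot_\theta$ is continuous, and $\psi_\theta(\SL(2,\EE))$ is a locally closed, hence locally compact Hausdorff, subspace of $\P(\End(\EE^2))$) combined with openness of the quotient map $\SL(2,\EE)\to \SL(2,\EE)/\SL(2,\FF)$.
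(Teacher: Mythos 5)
Your proposal is correct, and its skeleton is the paper's: identify $B^{-}_{\EE}\backslash\SL(2,\EE)$ with $P^1(\EE)\cong\partial\tri_{\EE}$ via $B^{-}_{\EE}g\mapsto g^{-1}\big[\begin{smallmatrix}0\\1\end{smallmatrix}\big]$, so that the double cosets correspond to the $\SL(2,\FF)$-orbits of Lemma~\ref{lem::nr_orbits_SL(F)_SL(E)}, and get openness in (2) by pulling the open orbits back through the open quotient map. You deviate in two places, both defensibly. For density in (2), the paper fixes one open cell $B^{-}_{\EE}g_m\SL(2,\FF)$ and shows, using the cone topology and the subgroup $B$ from the proof of Lemma~\ref{lem::nr_orbits_SL(F)_SL(E)}, that its accumulation points are exactly $B^{-}_{\EE}\SL(2,\FF)$; your argument that the complement of the union is $q^{-1}(P^1(\FF))$ and that $P^1(\FF)$ is nowhere dense in $P^1(\EE)$ (a proper closed $\FF$-subspace of $\EE$ has empty interior, since any ball around $0$ scales to all of $\EE$) reaches the same conclusion with less computation, at the cost of giving slightly less information about where each individual cell accumulates. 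In (3), the paper merely asserts that the claim follows from (2), $B^{-}_{\EE}=U^{-}_{\EE}T_{\EE}$ and continuity of $\psi_{\theta}$, which by itself only delivers density; your extra steps --- the right-$\SL(2,\FF)$-invariance $\psi_{\theta}(gh)=\psi_{\theta}(g)$ and the homogeneous-space theorem making $\SL(2,\EE)/\SL(2,\FF)\to\psi_{\theta}(\SL(2,\EE))$ a homeomorphism --- are exactly what the openness assertion requires, so here you are more careful than the source; the one assertion you should still justify or cite is that the orbit $\psi_{\theta}(\SL(2,\EE))$ is locally closed in $\P(\End(\EE^2))$ (standard for algebraic actions over local fields), which is what makes the homogeneous-space theorem applicable. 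Finally, note that Lemma~\ref{lem::nr_orbits_SL(F)_SL(E)} only gives \emph{at most} five orbits, with possible coincidences among the $\big[\begin{smallmatrix}1\\ m\alpha\end{smallmatrix}\big]$, so the union over $I$ is disjoint only after discarding redundant representatives; this caveat is inherited from the paper and affects both proofs equally.
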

\begin{proof}
Let $g \in \SL(2,\EE)$ and let $\xi_0 := \big[  \begin{smallmatrix} 0 \\ 1 \end{smallmatrix} \big]$. The $\SL(2,\EE)$-stabilizer of $\xi_0$ is  $B^{-}_{\EE}$.

If $g^{-1}(\xi_0)$ is in the $\SL(2,\FF)$-orbit of $\xi_0$ then $g\in B^{-}_{\EE} \SL(2,\FF)$. If not then there are $4$ cases to consider, each corresponding to the open $\SL(2,\FF)$-orbits in $\partial \tri_{\EE}$ from Lemma \ref{lem::nr_orbits_SL(F)_SL(E)}. By applying accordingly an element $h \in \SL(2,\FF)$ we get that $hg^{-1}(\xi_0) \in \{\big[  \begin{smallmatrix} 1 \\ m\alpha  \end{smallmatrix} \big] \;\vert \; \overline{m} \in \FF^{*}/(\FF^{*})^{2} \}$. Further, by multiplying again accordingly with the element $g_m$ from $I_m$ we obtain $g_m hg^{-1}(\xi_0) = \xi_0$, and thus $g^{-1}\in \SL(2,\FF) g_m^{-1}B^{-}_{\FF}$, giving $g \in  B^{-}_{\FF}g_m \SL(2,\FF)$ as required. This proves part (1) of the lemma. 

Let us prove part (2). Take $g_m \in I_m$. By Lemma \ref{lem::nr_orbits_SL(F)_SL(E)} the  $\SL(2,\FF)$-orbit of  $\big[ \begin{smallmatrix} 1 \\ m\alpha \end{smallmatrix} \big]= g_m^{-1}\left(\big[ \begin{smallmatrix} 0 \\ 1 \end{smallmatrix} \big]\right)$ is open in $\partial \tri_{\EE}$. Because the cone topology on $\SL(2,\EE)/B^{-}_{\EE} \cong \partial \tri_{\EE}$ is equivalent to the quotient topology on $\SL(2,\EE)/B^{-}_{\EE}$ induced from the continuous and open canonical projection of $\SL(2,\EE)$ to  $\SL(2,\EE)/B^{-}_{\EE}$, we have $ \SL(2,\FF) g_{m}^{-1} B^{-}_{\EE}$ is open in $\SL(2,\EE)$, and by taking the inverse map which is continuous, the set $B^{-}_{\EE}g_m\SL(2,\FF)$ is open in $\SL(2,\EE)$ as well.

It remains to prove that  $\bigsqcup\limits_{g_m \in I_m} U^{-}_{\EE}T_{\EE}g_m \SL(2,\FF)$ is dense in $\SL(2,\FF)$, and by part (1) it is enough to show that for any fixed $g_m \in I_m$, the accumulation points of the open set $B^{-}_{\EE}g_m \SL(2,\FF)$ are exactly the elements of $B^{-}_{\EE}\SL(2,\FF)$. Using again the cone topology on $\SL(2,\EE)/B^{-}_{\EE} \cong \partial \tri_{\EE}$ and the group $B$ from the proof of Lemma \ref{lem::nr_orbits_SL(F)_SL(E)} one easily gets part (2) of the lemma.

Part (3) follows easily from part (2), the fact that $B^{-}_{\EE}=U^{-}_{\EE}T_{\EE}$, and the continuity of the map $ \psi_{\theta}$.
\end{proof}

In this section we will consider the set $$\P_{0}:=\{x =[x_1, 1; x_3, x_4] \in \P(\End(\EE^2)) \}$$ which is open in $\P(\End(\EE^2))$. Then we denote $$X_{\theta, 0}:= \P_{0} \cap X_{\theta}$$ and this is open in $X_\theta$. As before, the set  $X_{\theta, 0}$ is called the big cell in  $X_\theta$. 

By Lemma \ref{lem::decomp_E_SL} it is again advised to study the sets $(U^{-}_{\EE}T_{\EE}g_i)\cdot_{\theta}\Id_{\EE^{2}}$, for any $g_i \in I$, as well as their closure.

We have
\begin{equation}
\label{equ::14}
\begin{split}
(U^{-}_{\EE}T_{\EE})\cdot_{\theta}\Id_{\EE^{2}}&= \left \{ \begin{pmatrix} x & 0 \\ a & x^{-1} \end{pmatrix}  \theta\left(\begin{pmatrix} x^{-1} & 0 \\ -a & x  \end{pmatrix} \right)  \; | \; x \in \EE^*, a\in\EE  \right\}\\
&= \left \{ \begin{pmatrix} x & 0 \\ a & x^{-1}  \end{pmatrix} \begin{pmatrix}\theta(x^{-1}) & 0 \\ \theta(-a) &\theta(x)  \end{pmatrix} \;| \; x \in \EE^*, a\in\EE  \right\}\\
&= \left \{ \begin{pmatrix} x\theta(x^{-1}) & 0 \\ a\theta(x^{-1}) + x^{-1}\theta(-a) & x^{-1}\theta(x)  \end{pmatrix} \;|\; x \in \EE^*, a\in\EE  \right\} \Rightarrow \\
& \Rightarrow [ (U^{-}_{\EE}T_{\EE})\cdot_{\theta}\Id_{\EE^{2}}] = \{[ x\theta(x^{-1}), 0; a\theta(x^{-1}) + x^{-1}\theta(-a),  x^{-1}\theta(x) ] \;| \; x \in \EE^*, a \in\EE \}. \\
 \end{split}
\end{equation}

For any $g_m \in I_m$ we also have:
\begin{equation}
\label{equ::24}
\begin{split}
(U^{-}_{\EE}T_{\EE} g_m)\cdot_{\theta}\Id_{\EE^{2}}&= \left \{ \begin{pmatrix} x & 0 \\ a & x^{-1} \end{pmatrix}  \left(\begin{matrix} 1 & -\frac{1}{m\alpha} \\ 0 &1 \end{matrix} \right)  \theta\left(  \left(\begin{matrix} 1 & \frac{1}{m\alpha} \\ 0 &1 \end{matrix} \right)\begin{pmatrix} x^{-1} & 0 \\ -a & x  \end{pmatrix} \right)  \; | \; x \in \EE^*, a\in\EE  \right\}\\
&= \left\{ \begin{pmatrix} x & -\frac{x}{m\alpha} \\ a & -\frac{a}{m\alpha} + x^{-1}  \end{pmatrix} \begin{pmatrix}\theta(x^{-1}) + \frac{\theta(a)}{m\alpha} & -\frac{\theta(x)}{m\alpha} \\ -\theta(a) &\theta(x)  \end{pmatrix} \;| \; x \in \EE^*, a\in\EE  \right\}\\
& \xRightarrow[x^{-1}\theta(x^{-1})]{multiply}\left\{ \left[\begin{pmatrix} 1 & -\frac{1}{m\alpha} \\ \frac{a}{x} & -\frac{a}{mx\alpha} + x^{-2}  \end{pmatrix} \begin{pmatrix}\theta(x^{-2}) + \frac{\theta(a)}{m\theta(x)\alpha} & -\frac{1}{m\alpha} \\ -\frac{\theta(a)}{\theta(x)} &1 \end{pmatrix} \right] \;| \; x \in \EE^*, a\in\EE  \right\}=\\
&
= [ (U^{-}_{\EE}T_{\EE})\cdot_{\theta}\Id_{\EE^{2}}] \subset \P_{0}. \\
 \end{split}
\end{equation}
Then the only accumulation points of the set $[ (U^{-}_{\EE}T_{\EE}g_m)\cdot_{\theta}\Id_{\EE^{2}}]$ are the points $[-\theta(b),1;-b\theta(b),b]$ with $b \in \EE$. Those elements appear when taking the sequences $\{x_{n_k}:=\omega_{\EE}^{-n_k}\}_{k\geq 0}$ and $\{a_{n_k}:=bx_{n_k}\}_{k\geq 0}$, with $ 0< n_k \to \infty$ and $b \in \EE$. For sequences of the form $\{x_{n_k}:=\omega_{\EE}^{n_k}\}_{k\geq 0}$ we obtain no limits.  As well, notice that the set $\{[-\theta(b),1;-b\theta(b),b]\; | \; b\in \EE \}$ is the $U^{-}_{\EE}T_{\EE}$-orbit of the element $[0,1;0,0] \in \P(\End(\EE^2))$ with respect to the action $\cdot_{\theta}$.

As a result of Lemma \ref{lem::decomp_E_SL} and the above calculations in (\ref{equ::14}) and (\ref{equ::24}) we get:
\begin{lemma}
\label{lem::X0_int_G_H_E}
Recall the notation $I_m :=\{g_m \; \vert \; m \in \EE^{*}/(\EE^{*})^{2} \}$. Then:
$$X_{\theta,0} \cap \psi_{\theta}(\SL(2,\EE))= [(\bigsqcup\limits_{g_m \in I_m} U^{-}_{\EE}T_{\EE}g_m)\cdot_{\theta}\Id_{\EE^{2}}]$$
$$X_{\theta,0} =\overline{[ (\bigsqcup\limits_{g_m \in I_m} U^{-}_{\EE}T_{\EE}g_m)\cdot_{\theta}\Id_{\EE^{2}}]} \cap  \P_{0}$$
$$X_{\theta}= \bigcup\limits_{g \in \SL(2,\EE)} g\cdot_{\theta}X_{\theta,0}.$$
\end{lemma}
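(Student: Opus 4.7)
The plan is to follow the strategy of the analogous Lemma \ref{lem::X0_int_G} from Section \ref{sec::diag}. All three equalities reduce, via Lemma \ref{lem::decomp_E_SL} and the explicit computations (\ref{equ::14}) and (\ref{equ::24}), to tracking which entries of a matrix representative in $\P(\End(\EE^2))$ vanish.

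For part (1), the plan is to start from the decomposition $\SL(2,\EE)=\bigsqcup_{g_i\in I} B^{-}_{\EE}g_i\SL(2,\FF)$ of Lemma \ref{lem::decomp_E_SL}(1). Since $\SL(2,\FF)=H_{\theta}$ is the stabilizer of $\Id_{\EE^2}$ under $\cdot_\theta$ and $B^{-}_{\EE}=U^{-}_{\EE}T_{\EE}$, this yields $\psi_{\theta}(\SL(2,\EE))=[\bigsqcup_{g_i\in I}(U^{-}_{\EE}T_{\EE}g_i)\cdot_\theta \Id_{\EE^2}]$. The explicit form in (\ref{equ::14}) shows that the $g_1=\Id_{\EE^2}$ piece has vanishing upper-right entry, so lies outside $\P_{0}$, while (\ref{equ::24}) shows that each $g_m\in I_m$ piece lies inside $\P_{0}$. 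Intersecting with $X_{\theta,0}=X_{\theta}\cap\P_{0}$ isolates exactly the $I_m$ part, giving the claimed equality.

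For part (2), the plan is immediate from Lemma \ref{lem::decomp_E_SL}(3), which already tells us $X_{\theta}=\overline{[(\bigsqcup_{g_m\in I_m}U^{-}_{\EE}T_{\EE}g_m)\cdot_\theta \Id_{\EE^2}]}$; intersecting both sides with the open set $\P_{0}$ produces $X_{\theta,0}$ on the left-hand side.

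For part (3), one inclusion is automatic because $X_{\theta}$ is $\SL(2,\EE)$-invariant under $\cdot_\theta$. The reverse inclusion requires that every $C\in X_{\theta}$ can be moved into $\P_{0}$ by some $g\in\SL(2,\EE)$. Writing a representative $C=[c_{11},c_{12};c_{21},c_{22}]$, the plan is a short case analysis: if $c_{12}\neq 0$ take $g=\Id_{\EE^2}$; if $c_{12}=0$ but $c_{22}\neq 0$ take $g=\left(\begin{smallmatrix}1 & a \\ 0 & 1\end{smallmatrix}\right)$, so that a direct computation shows the resulting upper-right entry becomes $-\theta(a)c_{11}-\theta(a)a\,c_{21}+a\,c_{22}$, which is a non-trivial function of $a\in\EE$ and hence non-zero for some choice; if $c_{12}=c_{22}=0$ and $c_{21}\neq 0$ apply the standard Weyl element $\left(\begin{smallmatrix}0 & -1 \\ 1 & 0\end{smallmatrix}\right)$ (fixed by $\theta$), which converts $C$ into $[c_{22},-c_{21};0,c_{11}]\in\P_{0}$. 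The main obstacle is the degenerate case $c_{12}=c_{21}=c_{22}=0$, where $C=[1,0;0,0]$: I would rule this out as a point of $X_{\theta}$ by inspecting the two families of accumulation points in (\ref{equ::14}) and (\ref{equ::24}), noting that the lower-right entry $x^{-1}\theta(x)$ in (\ref{equ::14}) always has $\EE$-norm $1$ and therefore cannot degenerate to $0$ while the upper-left $x\theta(x^{-1})$ stays non-zero, while (\ref{equ::24}) produces only limits of the form $[-\theta(b),1;-b\theta(b),b]$ whose upper-right entry is $1$, and invoking Lemma \ref{lem::decomp_E_SL}(3) to conclude these two families exhaust $X_{\theta}$.
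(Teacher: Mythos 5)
Your proposal is correct and follows essentially the same route as the paper: parts (1) and (2) are read off from Lemma \ref{lem::decomp_E_SL} together with the computations (\ref{equ::14}) and (\ref{equ::24}), exactly as in the paper's proof, and for part (3) the paper merely asserts that when the $(1,2)$-entry of $C$ vanishes one can arrange some $g \in \SL(2,\EE)$ moving $C$ into $\P_0$ ``by some easy calculations'' --- your explicit case analysis (unipotent element, then Weyl element) supplies exactly those calculations. One small point to tighten in your treatment of the degenerate case $C=[1,0;0,0]$: the description of the accumulation points of (\ref{equ::24}) as $[-\theta(b),1;-b\theta(b),b]$ classifies only the limits lying in $\P_0$ (the closure also contains points outside $\P_0$, e.g. $[0,0;1,0]$ obtained by letting $\vert b\vert_\EE \to \infty$), so these families do not literally exhaust $X_{\theta}$ as you claim; however, the Galois-norm argument you already apply to (\ref{equ::14}) works verbatim for (\ref{equ::24}), since there the normalized $(1,1)$- and $(2,2)$-entries are $\theta$-conjugates of one another and hence have equal $\EE$-norm, so the $(2,2)$-entry can never become negligible relative to the $(1,1)$-entry and $[1,0;0,0]$ cannot arise as a limit.
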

\begin{proof}
It is clear from the above calculation that
$$ [(\bigsqcup\limits_{g_m \in I_m} U^{-}_{\EE}T_{\EE}g_m)\cdot_{\theta}\Id_{\EE^{2}}] = X_{\theta,0} \cap \psi_{\theta}(\SL(2,\EE))=  \P_{0} \cap \psi_{\theta}(\SL(2,\EE)). $$
Since $\overline{[ (\bigsqcup\limits_{g_m \in I_m} U^{-}_{\EE}T_{\EE}g_m)\cdot_{\theta}\Id_{\EE^{2}}]} =  X_{\theta}$ by  Lemma \ref{lem::decomp_E_SL}(3), we immediately have 
$$X_{\theta,0} =\overline{[ (\bigsqcup\limits_{g_m \in I_m} U^{-}_{\EE}T_{\EE}g_m)\cdot_{\theta}\Id_{\EE^{2}}]} \cap  \P_{0}.$$

Let $C \in X_{\theta}$. As $C \in \P(\End(\EE^2))$ we have $C = [x_1, x_2; x_3,x_4]$. If $x_2 \neq 0$ then $C \in X_{\theta,0}$ and we are done. If $x_1 =0$, then by some easy calculations one can arrange for some $g \in \SL(2,\EE)$, such that $\rho(g) C \rho(\theta(g^{-1})) \in \P_0$. Then we are done.
\end{proof}

We are now ready to prove the main result of this section.

\begin{proof}[Proof of Theorem \ref{thm::all_orbits_E}]
By Lemma \ref{lem::X0_int_G_H_E}, the $\SL(2,\EE)$-orbits in $X_{\theta}$ with respect to the $\cdot_{\theta}$ action are determined by the $\SL(2,\EE)$-orbits given by the points of the set $X_{\theta,0}$. Moreover, the calculations in (\ref{equ::24}) show the only accumulation points of the sets $[ (U^{-}_{\EE}T_{\EE}g_m)\cdot_{\theta}\Id_{\EE^{2}}]$  in $\P_0$ are the points $[-\theta(b),1;-b\theta(b),b]$ with $b \in \EE$,  which is the $U^{-}_{\EE}T_{\EE}$-orbit of the element $[0,1;0,0] \in \P(\End(\EE^2))$ with respect to the action $\cdot_{\theta}$. In addition, it is very easy to see that the only accumulation point of the set $[T_{\EE}\cdot_{\theta}\Id_{\EE^{2}}]$ in $\P_0$ is just $[0,1;0,0]$.

Further, by an easy computation 
$$ \begin{pmatrix} a & b \\ c & d \end{pmatrix} \begin{pmatrix} 0 & 1 \\ 0 & 0 \end{pmatrix} \begin{pmatrix}\theta(d) & -\theta(b) \\ -\theta(c) & \theta(a)  \end{pmatrix}= \begin{pmatrix} -a\theta(c) & a\theta(a) \\ -c\theta(c) & c\theta(a) \end{pmatrix}$$
showing that the $\SL(2,\EE)$-orbit of the point $[0,1;0,0]$ is indeed closed in $X_{\theta}$.

As $[(\bigsqcup\limits_{g_m \in I_m} U^{-}_{\EE}T_{\EE}g_m)\cdot_{\theta}\Id_{\EE^{2}}]$ is already part of the $\SL(2,\EE)$-orbit of the point $[1,0;0,1] = [\Id_{\EE^{2}}]$ with respect to the $\cdot_{\theta}$ action, the theorem follows.
\end{proof}

\begin{corollary}
With respect to the action $\cdot_{\theta}$, the stabilizer in $\SL(2,\EE)$ of the point $[1,0;0,1] =[\Id_{\EE^{2}}] $ is $\SL(2,\FF)$, and the stabilizer in $\SL(2,\EE)$ of the point $[0,1;0,0]$ is the subgroup 
$\{\left( \begin{smallmatrix} a-\alpha b& z \\  0 &  a+\alpha b \end{smallmatrix} \right) \; \vert \;  a,b \in\FF \text{ with }a^2 - \alpha^2 b^{2}=1,  z \in \EE \} \leq B_{\EE}^{+}.$ 
\end{corollary}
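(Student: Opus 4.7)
The plan is to unwind each stabilizer condition into a matrix equation and then solve directly. Both computations are short and algebraic, and I do not expect a real obstacle---only some care in matching the parametrizations.

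For the first point $[\Id_{\EE^2}]$, one has $g\cdot_{\theta}\Id_{\EE^2}=\rho(g)\rho(\theta(g^{-1}))=g\theta(g^{-1})$, so the stabilizer of the lift $\Id_{\EE^2}\in\End(\EE^2)$ is exactly $\{g\in\SL(2,\EE):\theta(g)=g\}$. Since $\theta$ acts coordinate-wise as the nontrivial Galois automorphism of $\EE/\FF$, this fixed-point set is precisely $\SL(2,\FF)=H_\theta$, as was already recorded before Lemma~\ref{lem::dense_orbit_E}.

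For the second point $[0,1;0,0]$, the main input is the matrix identity
$$\begin{pmatrix} x & y \\ u & v \end{pmatrix}\begin{pmatrix} 0 & 1 \\ 0 & 0 \end{pmatrix}\begin{pmatrix}\theta(v) & -\theta(y) \\ -\theta(u) & \theta(x)\end{pmatrix}=\begin{pmatrix} -x\theta(u) & x\theta(x) \\ -u\theta(u) & u\theta(x)\end{pmatrix}$$
which is essentially the one used at the end of the proof of Theorem~\ref{thm::all_orbits_E}. Setting the right-hand side equal to $\left(\begin{smallmatrix} 0 & 1 \\ 0 & 0 \end{smallmatrix}\right)$ immediately forces $u=0$ (from $u\theta(u)=0$) and $x\theta(x)=1$; the determinant condition together with $u=0$ then gives $v=x^{-1}$, while $y\in\EE$ remains free. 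Writing $x=a-\alpha b$ with $a,b\in\FF$, we have $\theta(x)=a+\alpha b$ and $x\theta(x)=a^2-\alpha^2 b^2$, so $x\theta(x)=1$ becomes exactly $a^2-\alpha^2 b^2=1$. This recovers the subgroup in the statement, with $z=y$ playing the role of the free off-diagonal entry in $\EE$.

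The only point worth highlighting is the convention: we are computing the stabilizer of the explicit lift $D\in\End(\EE^2)$ under the un-projectivized action $g\mapsto gD\theta(g^{-1})$, which is the implicit convention throughout Section~\ref{sec::all_orbits_E} (compare how the accumulation points of $[(U^{-}_{\EE}T_{\EE})\cdot_{\theta}\Id_{\EE^{2}}]$ are parametrized in~(\ref{equ::14})). With this convention both stabilizers follow immediately from the two displayed computations above, and the inclusion into $B_{\EE}^{+}$ claimed in the statement is manifest from the shape of the matrices produced.
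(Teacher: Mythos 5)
Your proposal is correct and follows essentially the same route as the paper: the paper's proof of the second part is exactly the single matrix identity you display, with the product set equal to the lift $\left(\begin{smallmatrix}0&1\\0&0\end{smallmatrix}\right)$ so that $u=0$ and the norm condition $x\theta(x)=a^2-\alpha^2b^2=1$ drop out, while the first part was already recorded when the action $\cdot_\theta$ was introduced. Your explicit remark about stabilizing the lift rather than the projective class is a fair reading of the paper's (implicit) convention and does not change the argument.
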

\begin{proof}
The second part just follows from the computation 
$$ \begin{pmatrix} a & b \\ c & d \end{pmatrix} \begin{pmatrix} 0 & 1 \\ 0 & 0 \end{pmatrix} \begin{pmatrix}\theta(d) & -\theta(b) \\ -\theta(c) & \theta(a)  \end{pmatrix}= \begin{pmatrix} -a\theta(c) & a\theta(a) \\ -c\theta(c) & c\theta(a) \end{pmatrix} =  \begin{pmatrix} 0 & 1 \\ 0 & 0 \end{pmatrix}.$$
\end{proof}

\section{The wonderful compactification of $\SL(2,\FF)/H_{\sigma}$}
\label{sec::all_orbits}

The third wonderful compactification is given by the family of inner involutions $\sigma$ of $\SL(2,\FF)$ with fixed point groups $H_{\sigma}=\{g \in \SL(2,\FF) \vert \; \sigma(g)=g\}$.  In this section we will use the notation from (\ref{equ::notations}).

In the next few paragraphs we summarize the corresponding results from \cite{HW_class} for $k$-involutions of $\SL(2,k)$ when $k$ is a field of characteristic not $2$. Let $\overline{k}$ be the algebraic closure of $k$.

Recall, a mapping $\phi: \SL(2,\overline{k}) \to \SL(2,\overline{k})$ is a \emph{$k$-automorphism} (or equivalently,  \emph{an automorphism defined over $k$}) if $\phi$ is a bijective rational k-homomorphism whose inverse is also a rational $k$-homomorphism,  \cite[Sec. 2.2]{Hel_k_invol}. An abstract automorphism $\theta$ of $\SL(2,\overline{k})$ of order two is called an \emph{abstract involution} of $\SL(2,\overline{k})$. A \emph{$k$-involution} $\theta$ of $\SL(2,\overline{k})$  is an involution defined over $k$ of $\SL(2,\overline{k})$, and the restriction of $\theta$ to $\SL(2,k)$  is a \emph{$k$-involution} of $\SL(2,k)$.  Given $g \in \SL(2,\overline{k})$ denote by $\iota_g$ the inner automorphism of $\SL(2,\overline{k})$  defined by $x \mapsto \iota_g(x):= gxg^{-1}$.

The classification of the isomorphism classes of $k$-involutions of  a connected reductive algebraic group defined over $k$ is given in \cite{Hel_k_invol}. A simple characterization of the isomorphism classes of $k$-involutions of $\SL(n,k)$ is given in \cite{HelWD}.
We record the classification of $k$-involutions of $\SL(2,k)$:

 \begin{theorem}\label{class_invol}[\cite{HW_class} Theorem 1, Corollary 1, Corollary 2]. Every $k$-isomorphism class of $k$-involution of $\SL(2,k)$ is of the form $\iota_A$ with $A= \left( \begin{smallmatrix} 0 &1 \\ m &0 \end{smallmatrix} \right) \in \GL(2,k)$. Two such $k$-involutions $\iota_A$ with $ A\in \{ \left( \begin{smallmatrix} 0 &1 \\ m &0 \end{smallmatrix} \right),  \left( \begin{smallmatrix} 0 &1 \\ m' &0 \end{smallmatrix} \right)\} \subset \GL(2,k)$ of $\SL(2,k)$ are conjugate if and only if $m$ and $m'$ are in the same square class of $k^{*}$. In particular, there are $order(k^* / (k^ *)^2)$ $k$-isomorphism classes of $k$-involutions of $\SL(2,k)$. 
 \end{theorem}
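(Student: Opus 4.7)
The plan is to follow the standard three-step classification: first reduce to inner automorphisms, then put each involution into a normal form, and finally identify the correct square-class invariant.

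\textbf{Reduction to inner automorphisms.} I would first invoke the fact that the Dynkin diagram of $\SL(2)$ is $A_{1}$, which has no nontrivial symmetry, so every algebraic-group automorphism of $\SL(2)$ is given by conjugation by an element of $\PGL(2)$. To descend to $k$-points, I would apply Hilbert 90 to the short exact sequence $1 \to \GL_{1} \to \GL_{2} \to \PGL_{2} \to 1$, which yields $\PGL(2,k) = \GL(2,k)/k^{*}$. Consequently every $k$-involution of $\SL(2,k)$ is of the form $\iota_{A}$ for some $A \in \GL(2,k)$, determined uniquely modulo $k^{*}$.

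\textbf{Normal form.} Next I would determine which $A$ produce involutions. The condition $\iota_{A}^{2} = \Id$ is equivalent to $A^{2}$ centralizing $\SL(2,k)$, hence (by Zariski density of $\SL(2,k)$ in $\SL(2,\overline{k})$) to $A^{2} \in k^{*} I$. Together with $A \notin k^{*}I$, the Cayley--Hamilton identity $A^{2} = \tr(A)\,A - \det(A)\,I$ then forces $\tr(A) = 0$. Rational canonical form theory now shows that any such nonscalar trace-zero $A$ is $\GL(2,k)$-conjugate to the companion matrix of its characteristic polynomial $x^{2} + \det A$; conjugating this companion matrix by $\left(\begin{smallmatrix} 0 & 1 \\ 1 & 0 \end{smallmatrix}\right)$ yields the anti-diagonal normal form $A_{m} = \left(\begin{smallmatrix} 0 & 1 \\ m & 0 \end{smallmatrix}\right)$ with $m = -\det A$.

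\textbf{Invariant.} By the first step, $\iota_{A_{m}}$ and $\iota_{A_{m'}}$ are conjugate in $\Aut(\SL(2,k)) = \PGL(2,k)$ iff there exist $B \in \GL(2,k)$ and $\mu \in k^{*}$ with $B A_{m} B^{-1} = \mu A_{m'}$. Comparing determinants immediately forces $-m = \mu^{2}(-m')$, i.e.\ $m \equiv m' \pmod{(k^{*})^{2}}$. Conversely, if $m = \mu^{2} m'$, then $A_{m}$ and $\mu A_{m'}$ are trace-zero matrices with equal determinant and hence share the same rational canonical form, so they are $\GL(2,k)$-conjugate. This produces the claimed bijection between isomorphism classes of $k$-involutions and $k^{*}/(k^{*})^{2}$.

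The main obstacle I anticipate lies in the descent in the first step: ensuring that each $k$-rational involution, a priori given only by an element of $\PGL(2,\overline{k})$, is genuinely represented by an element of $\GL(2,k)$. Hilbert 90 is what makes this automatic; the same cohomological data (identifying $H^{1}(k,\mu_{2})$ with $k^{*}/(k^{*})^{2}$) also explains why the final classifying invariant naturally takes values in $k^{*}/(k^{*})^{2}$.
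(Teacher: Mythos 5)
Your argument is correct, and it is worth noting that the paper does not prove this statement at all: it is imported verbatim from Helminck--Wu (\cite{HW_class}, Theorem 1 and its corollaries) as background, so there is no internal proof to compare against. Your proposal supplies a clean self-contained derivation along the standard lines: $\Aut(\SL_2)=\PGL_2$ because $A_1$ has no diagram automorphisms, Hilbert 90 gives $\PGL(2,k)=\GL(2,k)/k^{*}$ so every $k$-involution is some $\iota_A$ with $A\in\GL(2,k)$ well defined up to scalars, Cayley--Hamilton forces $\tr A=0$, rational canonical form yields the anti-diagonal normal form $A_m$, and the determinant-modulo-squares computation identifies the invariant in $k^{*}/(k^{*})^{2}$; each of these steps checks out (e.g.\ $\det A_m=-m$ versus $\det(\mu A_{m'})=-\mu^{2}m'$, and two nonscalar trace-zero $2\times 2$ matrices with equal determinant share their characteristic, hence minimal, polynomial and so are $\GL(2,k)$-conjugate). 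The only point I would flag is the appeal to Zariski density of $\SL(2,k)$ in $\SL(2,\overline{k})$ when showing $A^{2}\in k^{*}\Id$: this is fine for the infinite fields (finite extensions of $\QQ_p$) actually used in the paper, but since the cited theorem is stated for arbitrary $k$ of characteristic $\neq 2$, it is cleaner to compute the centralizer of $\SL(2,k)$ in $\GL(2,\overline{k})$ directly from the two elementary unipotent matrices $\left(\begin{smallmatrix}1&1\\0&1\end{smallmatrix}\right)$ and $\left(\begin{smallmatrix}1&0\\1&1\end{smallmatrix}\right)$, which forces scalarity without any density hypothesis. With that cosmetic adjustment the proof is complete and matches the statement, including the count of $\mathrm{order}(k^{*}/(k^{*})^{2})$ classes.
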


\begin{definition} Given an involution $\sigma$ of a group $G$ the \emph{fixed point group} of $\sigma$ is $H_{\sigma}: = \{ x \in G \; \vert \; \sigma(x) = x \}$. 
\end{definition} 

For $\sigma$ a $k$-involution of $\SL(2,k)$ the quotient $\SL(2,k)/H_{\sigma}$ is called a \emph{$k$-symmetric variety}, and much of the structure of $\SL(2,k)/H_{\sigma}$ is determined by $H_\sigma$. 

\begin{proposition}[\cite{HW_class} Section 3]
\label{prop::prop_shape_inv}
 Let $\sigma= \iota_A$, with $A= \left( \begin{smallmatrix} 0 &1 \\ m &0 \end{smallmatrix} \right)  \in \GL(2,k)$, be a $k$-involution of $\SL(2,k)$. Then $ H_{\sigma}=\{  \left( \begin{smallmatrix} x &y \\ my &x\end{smallmatrix} \right) \in \SL(2,k)\; \vert \; x^2 - my^2 =1 \} $. 
\end{proposition}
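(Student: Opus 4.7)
The plan is to unpack the definition of the fixed point group directly. Since $\sigma = \iota_A$ is the inner automorphism $x \mapsto AxA^{-1}$, an element $g \in \SL(2,k)$ lies in $H_\sigma$ precisely when $AgA^{-1} = g$, which is equivalent to the commutation relation $Ag = gA$. So the proof reduces to solving this matrix equation for $g = \left(\begin{smallmatrix} a & b \\ c & d \end{smallmatrix}\right) \in \SL(2,k)$, subject to the determinant condition $ad - bc = 1$.

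Concretely, I would compute the two products
\[
Ag = \begin{pmatrix} 0 & 1 \\ m & 0 \end{pmatrix}\begin{pmatrix} a & b \\ c & d \end{pmatrix} = \begin{pmatrix} c & d \\ ma & mb \end{pmatrix}, \qquad gA = \begin{pmatrix} a & b \\ c & d \end{pmatrix}\begin{pmatrix} 0 & 1 \\ m & 0 \end{pmatrix} = \begin{pmatrix} mb & a \\ md & c \end{pmatrix},
\]
and then equate entries. The four entry-wise equalities yield $c = mb$ and $d = a$ (with the other two equalities $ma = md$ and $mb = c$ redundant, using $m \neq 0$ since $A \in \GL(2,k)$). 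Substituting $c = mb$ and $d = a$ into the unimodularity condition $ad - bc = 1$ gives $a^2 - mb^2 = 1$. Relabeling $a = x$ and $b = y$ produces exactly the claimed description of $H_\sigma$.

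There is essentially no obstacle here: the argument is a direct entry-wise matching of $2 \times 2$ matrices together with the $\SL(2,k)$ constraint. The only point to note is that, for the equation $Ag = gA$ to make sense as a characterization of $H_\sigma$, one uses that $A$ is invertible in $\GL(2,k)$ (which holds precisely because $m \in k^*$, as required for $\sigma$ to be a well-defined $k$-involution by Theorem \ref{class_invol}).
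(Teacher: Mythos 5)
Your computation is correct: equating $Ag$ and $gA$ entrywise gives $d=a$ and $c=mb$ (the remaining two equations being redundant since $m\neq 0$), and the determinant condition then yields $x^2-my^2=1$, which is exactly the claimed description of $H_\sigma$. The paper offers no proof of this proposition, citing it directly from \cite{HW_class}; your direct entrywise verification is the standard argument and fills that gap correctly.
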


\begin{theorem} [\cite{HW_class} Section 3.2]
\label{thm::thm_anist_iso} Let $k=\QQ_p$, $\sigma = \iota_A$ with $A= \left( \begin{smallmatrix} 0 &1 \\ m &0 \end{smallmatrix} \right)$ and $\overline{m} \in \QQ_{p}^{*} / (\QQ_p^*)^2$.  Then $H_\sigma$ is anisotropic if and only if $\bar m \neq \bar 1$.  If $\bar m = \bar 1$, then $H_\sigma$ is isotropic and conjugate to the maximal $\QQ_p$-split torus of $\SL(2,\QQ_p)$, i.e. the diagonal subgroup of $\SL(2,\QQ_p)$. 
\end{theorem}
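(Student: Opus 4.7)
The plan is to identify $H_\sigma$ with the norm-one subgroup of a quadratic $\QQ_p$-algebra and then split on whether that algebra is a field. Proposition \ref{prop::prop_shape_inv} gives $H_\sigma = \{\left(\begin{smallmatrix} x & y \\ my & x \end{smallmatrix}\right) : x,y \in \QQ_p,\; x^2 - my^2 = 1\}$, and the assignment $\left(\begin{smallmatrix} x & y \\ my & x \end{smallmatrix}\right) \mapsto x + y\sqrt m$ identifies $H_\sigma$ with the norm-one subgroup of the $\QQ_p$-algebra $K := \QQ_p[t]/(t^2 - m)$. The class $\bar m \in \QQ_p^{\times}/(\QQ_p^{\times})^2$ governs the structure of $K$: $\bar m = \bar 1$ iff $K \cong \QQ_p \times \QQ_p$, and $\bar m \ne \bar 1$ iff $K$ is a quadratic field extension of $\QQ_p$.

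Suppose first $\bar m = \bar 1$, and write $m = n^2$ with $n \in \QQ_p^{\times}$, so that $x^2 - my^2 = (x+ny)(x-ny)$. The generic element of $H_\sigma$ has common eigenlines $\langle (1, n)^T\rangle$ and $\langle (1, -n)^T\rangle$ with eigenvalues $x \pm ny$, and I will diagonalise via a suitable $\SL(2,\QQ_p)$-rescaling of the obvious $\GL(2,\QQ_p)$-eigenvector matrix. Specifically, $Q := \left(\begin{smallmatrix} 1 & -1/(2n) \\ n & 1/2 \end{smallmatrix}\right)$ satisfies $\det Q = 1$, and a direct matrix computation yields $Q^{-1} H_\sigma Q = T$, where $T$ is the diagonal torus of (\ref{equ::notations}). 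Hence $H_\sigma$ is $\SL(2,\QQ_p)$-conjugate to $T$, and in particular isotropic.

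Suppose next $\bar m \ne \bar 1$, so that $K/\QQ_p$ is a quadratic field extension with non-trivial Galois automorphism $\tau$ and unique absolute value extension $|\cdot|_K$. The defining relation reads $(x + y\sqrt m)\,\tau(x + y\sqrt m) = N_{K/\QQ_p}(x + y\sqrt m) = 1$, and Galois-invariance of $|\cdot|_K$ forces $|x + y\sqrt m|_K = 1$. Since any two $\QQ_p$-norms on the two-dimensional $\QQ_p$-vector space $K$ are equivalent, this uniformly bounds $|x|_p$ and $|y|_p$ over $H_\sigma$, so $H_\sigma$ is closed and bounded in $\SL(2,\QQ_p)$, hence compact. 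Since a one-dimensional $\QQ_p$-torus is either $\QQ_p$-split (with non-compact $\QQ_p$-points) or $\QQ_p$-anisotropic, compactness forces $H_\sigma$ to be anisotropic.

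The main obstacle is the last step: converting compactness into anisotropy relies on the classification of one-dimensional $\QQ_p$-tori. A self-contained substitute is to argue by contradiction: if $H_\sigma$ contained a non-trivial $\QQ_p$-split subtorus, then by dimension it would equal $H_\sigma$, giving a $\QQ_p$-isomorphism $H_\sigma \cong \mathbb{G}_m$; but the $\QQ_p$-points of any $\QQ_p$-embedding $\mathbb{G}_m \hookrightarrow \SL(2,\QQ_p)$ form an unbounded subset of $\SL(2,\QQ_p)$, contradicting the compactness established above.
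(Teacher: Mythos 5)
Your proof is correct, but note that the paper does not actually prove this statement: it is imported verbatim from Helminck--Wu \cite{HW_class} (Section 3.2), so there is no internal proof to match. What you have written is a clean, self-contained substitute, and it is worth comparing with the tools the paper \emph{does} develop. Your route identifies $H_\sigma$ with the norm-one group of $K=\QQ_p[t]/(t^2-m)$ and splits on whether $K$ is split or a field; the explicit conjugator $Q=\left(\begin{smallmatrix} 1 & -1/(2n) \\ n & 1/2 \end{smallmatrix}\right)$ checks out ($\det Q=1$, its columns span the common eigenlines $\langle(1,\pm n)^T\rangle$, and since $p\neq 2$ every diagonal element $\mathrm{diag}(\lambda,\lambda^{-1})$ is hit by $x=(\lambda+\lambda^{-1})/2$, $y=(\lambda-\lambda^{-1})/(2n)$), and the compactness argument via $|z|_K=|N_{K/\QQ_p}(z)|_p^{1/2}$ and equivalence of norms is standard and sound. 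The paper instead records the geometric counterpart of exactly this dichotomy as Theorem \ref{lem::geom_k_inv_SL_2} (quoted from \cite{CL_2}): $H_\sigma$ is the pointwise stabilizer in $\SL(2,K_m)$ of the two boundary points $\xi_\pm=\big[\begin{smallmatrix}1\\ \pm\sqrt{m}\end{smallmatrix}\big]$ intersected with $\SL(2,\FF)$, which lie in $\partial\tri_\FF$ precisely when $\bar m=\bar 1$ (giving a conjugate of the diagonal torus, hence isotropic) and in $\partial\tri_{K_m}-\partial\tri_\FF$ otherwise (giving a compact group). Your algebraic argument buys independence from the Bruhat--Tits machinery; the paper's geometric picture buys the orbit counts on $\partial\tri_\FF$ that drive Sections \ref{sec::all_orbits_E} and \ref{sec::all_orbits}. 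One small point of care in your last paragraph: the reduction of ``anisotropic'' to ``no nontrivial split subtorus, hence $H_\sigma\cong\mathbb{G}_m$ over $\QQ_p$ by dimension count'' tacitly uses that $H_\sigma$ is a one-dimensional torus; this is immediate from your identification of $H_\sigma$ with $R^{(1)}_{K/\QQ_p}\mathbb{G}_m$, but you should say so explicitly before invoking the dimension argument.
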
 

\begin{theorem}[\cite{Beun} Theorem 4.18]
\label{thm::orbits_H}
Let $k=\QQ_p$, $\sigma = \iota_A$ with $A= \left( \begin{smallmatrix} 0 &1 \\ m &0 \end{smallmatrix} \right)$ and $\overline{m} \in \QQ_{p}^{*} / (\QQ_p^*)^2$. Then 
\begin{enumerate}
\item
$\vert B^{+} \backslash \SL(2,\QQ_p)/H_{\sigma} \vert=2$ if  $\overline{m} \neq \overline{1}$
\item
$\vert B^{+} \backslash \SL(2,\QQ_p)/H_{\sigma} \vert=6$ if  $\overline{m} = \overline{1}$.
\end{enumerate}
\end{theorem}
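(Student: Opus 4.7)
The plan is to translate the double-coset count into an orbit count on the projective line and then exploit a natural $H_\sigma$-invariant. Since $B^{+}$ is the $\SL(2,\QQ_p)$-stabilizer of $\big[ \begin{smallmatrix} 1 \\ 0 \end{smallmatrix} \big] \in P^{1}(\QQ_p)$, the map $B^{+}g \mapsto g^{-1} \big[ \begin{smallmatrix} 1 \\ 0 \end{smallmatrix} \big]$ identifies $B^{+} \backslash \SL(2,\QQ_p)$ with $P^{1}(\QQ_p) = \partial \tri_{\QQ_p}$, and under this identification $B^{+} \backslash \SL(2,\QQ_p) / H_{\sigma}$ is in bijection with the set of $H_{\sigma}$-orbits on $P^{1}(\QQ_p)$ under the standard linear action. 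It therefore suffices to count these orbits.

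First I would record the explicit form $H_{\sigma} = \{ \left( \begin{smallmatrix} x & y \\ my & x \end{smallmatrix}\right) : x^2 - my^2 = 1\}$ from Proposition \ref{prop::prop_shape_inv}, and check by a direct computation that the quantity $Q(u,v) := v^{2} - m u^{2}$ is $H_{\sigma}$-invariant on $\QQ_p^2$ (using $x^{2} - m y^{2} = 1$, one gets $Q(xu+yv, \; myu+xv) = (x^{2}-my^{2}) Q(u,v) = Q(u,v)$). Since rescaling $(u,v) \mapsto (\lambda u, \lambda v)$ multiplies $Q$ by $\lambda^{2}$, the class of $Q(u,v)$ modulo $(\QQ_p^{*})^{2}$ is a well-defined $H_{\sigma}$-invariant on $P^{1}(\QQ_p) \setminus \{Q = 0\}$, taking values in $\QQ_p^{*}/(\QQ_p^{*})^{2} \cup \{0\}$.

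In the isotropic case $\bar{m} = \bar{1}$, by Theorem \ref{thm::thm_anist_iso} we may conjugate $H_{\sigma}$ to the diagonal split torus $T$; the action of $T$ on $P^{1}(\QQ_p)$ sends $[u:v] \mapsto [u : x^{-2} v]$. This has two fixed points $\big[ \begin{smallmatrix} 1 \\ 0 \end{smallmatrix} \big]$, $\big[ \begin{smallmatrix} 0 \\ 1 \end{smallmatrix} \big]$, and on the complement the orbits are exactly the fibres of the map $[1:v] \mapsto \bar{v} \in \QQ_p^{*}/(\QQ_p^{*})^{2}$, which has $|\QQ_p^{*}/(\QQ_p^{*})^{2}| = 4$ values. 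This yields the announced total of $2 + 4 = 6$ orbits.

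In the anisotropic case $\bar{m} \neq \bar{1}$, the form $Q$ has no nontrivial zero, so the invariant gives a well-defined map $P^{1}(\QQ_p) \to \QQ_p^{*}/(\QQ_p^{*})^{2}$. Its image is the group $N(\QQ_p(\sqrt{m})^{*}) / (\QQ_p^{*})^{2}$ of norm classes from the quadratic extension $\QQ_p(\sqrt{m})/\QQ_p$, which by local class field theory is an index-$2$ subgroup of $\QQ_p^{*}/(\QQ_p^{*})^{2}$ and therefore has exactly $2$ elements. Thus there are at most $2$ orbits. The hard (but standard) step is transitivity on each fibre: identifying $H_{\sigma}$ with $\SL_{1}(\QQ_p(\sqrt{m})/\QQ_p)$ acting on $\QQ_p^{2} \cong \QQ_p(\sqrt{m})$ by multiplication, one sees that the action is simply transitive on each non-empty level set $\{Q = c\}$, which after projectivising gives transitivity on each of the two classes. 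This furnishes exactly $2$ orbits, completing the count. The only subtle point is this last transitivity statement, but it is immediate once one views $\QQ_p^{2}$ as a one-dimensional $\QQ_p(\sqrt{m})$-module on which $H_{\sigma}$ acts by norm-one scalars.
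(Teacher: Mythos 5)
Your argument is correct, and it is worth noting that the paper does not actually prove this statement: it is imported verbatim from Beun--Helminck (\cite{Beun}, Theorem 4.18), and the paper's own contribution is a set of geometric lemmas (Lemmas \ref{lem::nr_orbits_diag_SL}, \ref{lem::nr_orbits_H_SL}, \ref{lem::nr_orbits_H_SL_m} and the proof of Lemma \ref{lem::decomp_H_SL_m}) that recover analogous information for general finite extensions $\FF$ of $\QQ_p$. Both routes begin the same way, by identifying $B^{+}\backslash \SL(2,\QQ_p)/H_{\sigma}$ with the $H_{\sigma}$-orbits on $P^1(\QQ_p)\cong\partial\tri_{\QQ_p}$, and both treat the split case $\overline{m}=\overline{1}$ identically (conjugate to the diagonal torus, two fixed ends plus the four square classes, giving $2+4=6$). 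The divergence is in the anisotropic case: you use the quadratic form $Q(u,v)=v^{2}-mu^{2}$ as an $H_{\sigma}$-invariant valued in $\QQ_p^{*}/(\QQ_p^{*})^{2}$, identify $H_{\sigma}$ with the norm-one group of $\QQ_p(\sqrt{m})$ acting by multiplication so that it is simply transitive on each level set of $Q$, and invoke the index-$2$ property of the norm group to get exactly $2$ orbits; your computation $Q(xu+yv,\,myu+xv)=(x^{2}-my^{2})Q(u,v)$ and the rescaling argument $Q(\lambda u,\lambda v)=\lambda^{2}Q(u,v)$ are both correct, so the orbits are precisely the two fibres of $Q$ modulo squares. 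The paper instead passes to the tree $\tri_{\EE}$ for $\EE=\FF(\sqrt{m})$, views $H_{\sigma_m}$ as $\Fix_{\SL(2,\EE)}(\{\xi_{-},\xi_{+}\})\cap\SL(2,\FF)$, and only establishes an upper bound (at most $8$ orbits, each open), which suffices for its construction of the compactification but does not recover the exact count $2$. Your argument is therefore sharper on this point and fully self-contained, at the cost of being specific to the explicit matrix form of $H_{\sigma}$ rather than to the boundary geometry the paper needs elsewhere.
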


The same results as in Theorem \ref{thm::orbits_H} should hold true for any finite field-extension $\FF$ of $\QQ_p$. Below we will give a purely geometric proof of such results.

We also record the following geometric interpretation of the fixed point group $H_{\sigma}$, when $k=\FF$ is a finite field-extension of $\QQ_p$. To fix notation, let $\FF$ be a finite field-extension of $\QQ_p$. We denote by $\tri_\FF$ the Bruhat--Tits tree for $\SL(2,\FF)$ whose vertices  are equivalence classes of $\mathcal{O}_\FF$-lattices in $\FF^2$ (for its construction see \cite{Serre}). The tree $\tri_\FF$ is a regular, infinite tree with valence $\vert k_\FF\vert +1$ at every vertex, where $ k_\FF$ is the residue field of $\FF$.  The boundary at infinity $\partial \tri_{\FF}$ of $\tri_{\FF}$ is the projective space $P^1(\FF) \cong\FF \cup \{\infty\}$. Moreover, the endpoint $\infty \in \partial \tri_{\FF}$ corresponds to the vector  $\big[ \begin{smallmatrix}  0 \\1 \end{smallmatrix} \big] \in P^1(\FF)$. The rest of the endpoints $\xi \in \partial \tri_\FF$ correspond to the vectors $\big[ \begin{smallmatrix} 1 \\x \end{smallmatrix} \big] \in P^1(\FF) $, where $x \in \FF$. 

\begin{theorem}[\cite{CL_2} Corollary 4.8]
\label{lem::geom_k_inv_SL_2}
Let $\FF$ be a finite field-extension of $\QQ_p$, $A=\left( \begin{smallmatrix} 0 &1 \\ m &0 \end{smallmatrix} \right)$, with $m\in \FF^{*}/(\FF^{*})^{2}$, and $\sigma:=\iota_A$ the corresponding $\FF$-involution of $\SL(2,\FF)$.  Take $K_m:=\FF(\sqrt{m})$  a field extension. 
Then the only solutions of the equation $A(\xi)=\xi$ with $\xi \in P^{1}K_a$ are   $\xi_{\pm} := \big[ \begin{smallmatrix} 1 \\ \pm \sqrt{m} \end{smallmatrix} \big]$ and $H_{\sigma} = \Fix_{\SL(2,K_m)}(\{\xi_{-},\xi_{+}\}) \cap \SL(2,\FF) =\{  \left( \begin{smallmatrix} x &y \\ my &x\end{smallmatrix} \right) \in \SL(2,\FF)\; \vert \; x^2 - my^2 =1 \} $. Moreover,
\begin{enumerate}
\item
if $m=1$ then  $\xi_{\pm} :=\big[ \begin{smallmatrix}  1 \\ \pm 1 \end{smallmatrix} \big]$  and  $H_{\sigma}$ contains all the hyperbolic elements of $\SL(2,\FF)$ with $\xi_{\pm}$ as their repelling and attracting endpoints. In particular, $H_{\sigma}$ is  $\GL(2,\FF)$-conjugate to the entire diagonal subgroup of $\SL(2,\FF)$,  thus $H_{\sigma}$ is noncompact and abelian. 
\item
if $m\neq 1$ then  $\xi_{\pm} :=\big[  \begin{smallmatrix}  1 \\ \pm  \sqrt{m}  \end{smallmatrix} \big]  \in P^{1}K_m - P^{1}\FF$, and $H_{\sigma}$ is compact and abelian.
\end{enumerate}
\end{theorem}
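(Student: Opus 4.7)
The plan is to establish the three assertions of the theorem in turn. For the first (all $A$-fixed points on $P^1(K_m)$), a direct calculation: $A = \left(\begin{smallmatrix} 0 & 1 \\ m & 0 \end{smallmatrix}\right)$ acts on $P^1(K_m)$ by $[p:q] \mapsto [q:mp]$, so the fixed-point equation $[q:mp] = [p:q]$ becomes $q^2 = mp^2$ (the case $p = 0$ is excluded since $A[0:1] = [1:0] \neq [0:1]$). Hence $\xi_\pm = [1:\pm\sqrt{m}]$ are the only fixed points, and both lie in $P^1(K_m)$ by the definition of $K_m = \FF(\sqrt{m})$.

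For the second assertion (identification of $H_\sigma$), I would use that $\sigma = \iota_A$, so $\sigma(g) = g$ iff $g$ commutes with $A$. Over $K_m$, $A$ is diagonalizable with eigenvectors $v_\pm = (1, \pm\sqrt{m})^T$ (corresponding to $\xi_\pm$), so its $\SL(2,K_m)$-centralizer coincides with the pointwise stabilizer $\Fix_{\SL(2,K_m)}(\{\xi_-, \xi_+\})$, which in the eigenbasis is the diagonal torus $\{\mathrm{diag}(\lambda, \lambda^{-1}) : \lambda \in K_m^\times\}$. Conjugating back to the standard basis via $P = (v_+ \mid v_-)$, a direct computation gives
\[
P\,\mathrm{diag}(\lambda, \lambda^{-1})\,P^{-1} = \begin{pmatrix} x & y \\ my & x \end{pmatrix}
\]
with $x = (\lambda + \lambda^{-1})/2$, $y = (\lambda - \lambda^{-1})/(2\sqrt{m})$, satisfying $x^2 - my^2 = 1$. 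Intersecting with $\SL(2,\FF)$ then imposes $x, y \in \FF$ and yields the stated formula for $H_\sigma$ (matching Proposition~\ref{prop::prop_shape_inv}).

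The third assertion is a case analysis on the square class of $m$. If $\bar m = \bar 1$, then $\sqrt{m} \in \FF$, $\xi_\pm \in P^1(\FF) = \partial \tri_\FF$, the eigenbasis $(v_+, v_-)$ is $\FF$-rational, and $P \in \GL(2,\FF)$; hence $H_\sigma$ is $\GL(2,\FF)$-conjugate to the diagonal subgroup of $\SL(2,\FF)$, a noncompact split torus. Hyperbolic elements of $\SL(2,\FF)$ with endpoints $\xi_\pm$ are precisely those diagonalizable in the $(v_+, v_-)$ basis with nontrivial eigenvalues, so they all lie in $H_\sigma$. If $\bar m \neq \bar 1$, then $K_m/\FF$ is a genuine quadratic extension, $\xi_\pm \in P^1(K_m) \setminus P^1(\FF)$, and $H_\sigma$ is the norm-one subgroup of $K_m^\times$ embedded via $x + y\sqrt{m} \mapsto \left(\begin{smallmatrix} x & y \\ my & x \end{smallmatrix}\right)$; being a one-dimensional torus given by the intersection of the variety $x^2 - my^2 = 1$ with $\FF^2$, it is clearly abelian.

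The main obstacle I expect is the compactness assertion in the case $\bar m \neq \bar 1$. A direct argument uses that the norm form $N(x, y) = x^2 - my^2$ is anisotropic on $\FF^2$ (since $\bar m \notin (\FF^\times)^2$ means $N(x, y) = 0$ forces $x = y = 0$), so the level set $N^{-1}(1)$ is closed and bounded in $\FF^2$, hence compact in the $p$-adic topology. A more geometric alternative would use Bruhat-Tits theory: because the fixed endpoints $\xi_\pm$ of $H_\sigma$ lie outside $\partial \tri_\FF$, the group $H_\sigma$ fixes no end of the $\FF$-tree, and must therefore stabilize a bounded set (a vertex or edge midpoint) in $\tri_\FF$, whose $\SL(2,\FF)$-stabilizer is compact-open by the standard theory.
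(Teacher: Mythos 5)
Your argument is correct, but there is nothing in this paper to compare it against: the statement is imported verbatim as Corollary~4.8 of \cite{CL_2} and is given no proof here. Taken on its own merits, your proof is the standard one and it goes through. The fixed-point computation $q^2=mp^2$ is right; identifying $H_\sigma=\{g:\iota_A(g)=g\}$ with the centralizer of $A$, diagonalizing $A$ over $K_m$ with eigenvectors $v_\pm=(1,\pm\sqrt m)^T$, and conjugating the diagonal torus back by $P=(v_+\mid v_-)$ does yield exactly $\left(\begin{smallmatrix} x & y\\ my & x\end{smallmatrix}\right)$ with $x=(\lambda+\lambda^{-1})/2$, $y=(\lambda-\lambda^{-1})/(2\sqrt m)$ and $x^2-my^2=1$ (your entries check out), and intersecting with $\SL(2,\FF)$ gives the stated group. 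The dichotomy on the square class of $m$ then gives both items, with $P\in\GL(2,\FF)$ when $\bar m=\bar 1$ producing the conjugacy to the diagonal subgroup.

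Two details deserve one more line each. First, in the anisotropic case, ``closed and bounded'' is the right conclusion but boundedness is the whole point: you should record that, by homogeneity and compactness of the unit sphere of $\FF^2$, an anisotropic form satisfies $|x^2-my^2|\geq c\max(|x|_\FF,|y|_\FF)^2$ for some $c>0$, whence $N^{-1}(1)$ is bounded and hence compact. Second, your ``more geometric alternative'' has a genuine gap as stated: a subgroup of $\Aut(\tri_\FF)$ that fixes no end need not have a bounded orbit (it could contain hyperbolic elements). To repair it, note that a nontrivial element of $H_\sigma$ fixes exactly the two points $\xi_\pm\in P^1(K_m)\setminus P^1(\FF)$, so no nontrivial element of $H_\sigma$ can act hyperbolically on $\tri_\FF$ (a hyperbolic element would fix two ends in $P^1(\FF)$); a group of elliptic isometries of a tree with no fixed end has a global fixed point, and vertex and edge stabilizers in $\SL(2,\FF)$ are compact, so the closed subgroup $H_\sigma$ is compact. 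With either repair the proof is complete, and the first (anisotropy of the norm form) is the cleaner route.
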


Now, given an involution $\sigma: = \iota_A$ as in Theorem \ref{class_invol}, we consider the following action of $\SL(2,\FF)$ on $\End(\FF^2)$ given by $g\cdot_{\sigma} D := \rho(g) D \rho(\sigma(g^{-1}))$, for every $g \in \SL(2,\FF)$ and $D \in \End(\FF^2)$. The stabilizer in $\SL(2,\FF)$, with respect to the action $\cdot_\sigma$, of the element $\Id_{\FF^2}$ is the subgroup $H_{\sigma}$. Then $\SL(2,\FF) \cdot_{\sigma} \Id_{\FF^2} \cong \SL(2,\FF)/H_{\sigma}$ and we consider the continuous map 
$$\psi_{\sigma} : \SL(2,\FF) \to \P(\End(\FF^2)), \; \text{ given by } g \mapsto \psi_{\sigma}(g):=[\rho(g)\rho(\sigma(g^{-1}))]=[\rho(g \sigma(g^{-1}))].$$

We want to understand the following closure with respect to the involution $\sigma$ and the map $\psi_{\sigma}$:
$$X_\sigma:= \overline{\psi_{\sigma}(\SL(2,\FF))}=\overline{[\SL(2,\FF) \cdot_{\sigma} \Id_{\FF^2}]}= \overline{[\SL(2,\FF)/H_{\sigma}]} \text{ in }\P(\End(\FF^2)).$$

\begin{definition}
We regard the above closure $X_\sigma$ as a compactification of $ \SL(2,\FF)/H_{\sigma}$ and call it the \textit{the wonderful compactification of $\SL(2,\FF)/H_{\sigma}$ with respect to the involution $\sigma$}.
\end{definition}

Just from the definition, we have the following trivial lemma:

\begin{lemma}
\label{lem::dense_orbit}
The $\SL(2,\FF)$-orbit of $\Id_{\FF^2}$  in $\P(\End(\FF^2))$, with respect to the action $\cdot_\sigma$,  is dense in  $X_{\sigma}$.
\end{lemma}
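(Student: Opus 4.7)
The plan is to observe that this lemma is essentially tautological and follows directly from how $X_\sigma$ is defined. The only real content is to identify the $\SL(2,\FF)$-orbit of $\Id_{\FF^2}$ under $\cdot_\sigma$ (viewed projectively) with the image $\psi_\sigma(\SL(2,\FF))$.

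First I would unpack the action: for any $g \in \SL(2,\FF)$, we have by definition $g \cdot_\sigma [\Id_{\FF^2}] = [\rho(g)\,\Id_{\FF^2}\,\rho(\sigma(g^{-1}))] = [\rho(g\,\sigma(g^{-1}))] = \psi_\sigma(g)$. Hence the projective orbit of $[\Id_{\FF^2}]$ coincides set-theoretically with $\psi_\sigma(\SL(2,\FF))$.

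Next I would invoke the very definition
\[
X_\sigma = \overline{\psi_\sigma(\SL(2,\FF))} = \overline{[\SL(2,\FF) \cdot_\sigma \Id_{\FF^2}]} \subset \P(\End(\FF^2)),
\]
so the closure (in the usual topology on $\P(\End(\FF^2))$) of the orbit is $X_\sigma$ itself. This is exactly the statement that the orbit is dense in $X_\sigma$.

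There is really no obstacle here: the continuity of $\psi_\sigma$ (which follows because $\rho$, $\sigma$, inversion, multiplication and projectivization are all continuous for the compact-open/quotient topologies involved) is already noted in the text preceding the lemma, and continuity is not even needed for the denseness conclusion—only the definition of $X_\sigma$ as a closure is used. I would therefore keep the proof to a single sentence, mirroring the two analogous trivial lemmas (Lemma~\ref{lem::dense_orbit} of Section~\ref{sec::diag} and Lemma~\ref{lem::dense_orbit_E}) that were handled by the authors in the earlier sections.
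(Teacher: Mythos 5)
Your proposal is correct and matches the paper exactly: the paper states this lemma with no proof beyond the remark ``Just from the definition, we have the following trivial lemma,'' which is precisely your observation that the orbit of $[\Id_{\FF^2}]$ is $\psi_{\sigma}(\SL(2,\FF))$ and that $X_{\sigma}$ is by definition its closure. Nothing further is needed.
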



In order to prove Theorem \ref{thm::all_orbits} we will again need the set $$\P_{0}:=\{x =[1,  x_2; x_3, x_4] \in \P(\End(\FF^2)) \}$$ which is open in $\P(\End(\FF^2))$. Then denote $$X_{\sigma, 0}:= \P_{0} \cap X_{\sigma}$$ and this is open in $X_\sigma$. The set  $X_{\sigma, 0}$ is also called the big cell in  $X_\sigma$.

Instead of using the density of $U^{-}T \times U$ in $\SL(2,\FF)$ as in Section \ref{sec::diag} it will be much more convenient to employ the fact that the double coset $B^{+} \backslash \SL(2,\FF)/H_{\sigma}$ has a finite number of elements. Below we will explicitly compute those double coset for each $\overline{m} \in \FF^{*} / (\FF^*)^2$, and find the double cosets that are open in $\SL(2,\FF)$. Along the way we will apply the geometric picture provided by Theorem \ref{lem::geom_k_inv_SL_2}. Since the constuction below works by replacing the Borel $B^{+}$ with any of its $\SL(2,\FF)$-conjugates, we will use the opposite $B^{-}$.

Recall that for any locally compact group $G$ and any closed subgroup $H \leq G$, the quotient topology on the homogeneous space $G/H$ is defined by the canonical projection $p : G \to G/H$ being continuous and open. Moreover, one can prove that for any compact subset $Q$ of $G/H$, there exists a compact subset $K$ of $G$ with $p(K) = Q$ (see \cite[Appendices B, Lemma B.1.1]{BHV}). 

The cases $\overline{m} = \overline{1}$ and $\overline{m} \neq \overline{1}$  behave differently, since for the later we will use a quadratic extension of $\FF$.

\textbf{Case $\overline{m} = \overline{1}$.}

Recall from \cite{CL_2} the following trivial lemma.
\begin{lemma}
\label{lem::nr_orbits_diag_SL}
Let $K$ be a finite field-extension of $\QQ_p$. There are $6$ orbits of the diagonal subgroup  $Diag(K):=\{ \left( \begin{smallmatrix} d^{-1} &0 \\ 0 & d\end{smallmatrix} \right) \; \vert \; d \in K^{*}\} \leq \SL(2,K)$ on the boundary $\partial \tri_K$:
\begin{enumerate}
\item 
the $Diag(K)$-orbit of  $\big[\begin{smallmatrix} 1 \\ 0 \end{smallmatrix} \big]$ and the $Diag(K)$-orbit of  \big[$\begin{smallmatrix} 0 \\ 1 \end{smallmatrix} \big] $
\item
the $Diag(K)$-orbits of  $\big[\begin{smallmatrix} 1 \\ \overline{m} \end{smallmatrix} \big]$, for each $\overline{m} \in K^{*}/(K^{*})^{2}$. 
\end{enumerate}
\end{lemma}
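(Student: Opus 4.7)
The plan is to enumerate the orbits by identifying $\partial \tri_K \cong P^1(K)$ and then directly computing the action of $Diag(K)$ on projective coordinates. First I would observe that the two endpoints $\big[\begin{smallmatrix} 1 \\ 0 \end{smallmatrix}\big]$ and $\big[\begin{smallmatrix} 0 \\ 1 \end{smallmatrix}\big]$ are fixed by every element of $Diag(K)$, since $\left(\begin{smallmatrix} d^{-1} & 0 \\ 0 & d \end{smallmatrix}\right)\big[\begin{smallmatrix} 1 \\ 0 \end{smallmatrix}\big] = \big[\begin{smallmatrix} d^{-1} \\ 0 \end{smallmatrix}\big] = \big[\begin{smallmatrix} 1 \\ 0 \end{smallmatrix}\big]$, and symmetrically for the other endpoint. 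This immediately produces two singleton orbits.

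Next I would parametrize the remaining endpoints of $\partial \tri_K$ as $\big[\begin{smallmatrix} 1 \\ x \end{smallmatrix}\big]$ with $x \in K^{*}$ and compute
\[
\left(\begin{smallmatrix} d^{-1} & 0 \\ 0 & d \end{smallmatrix}\right)\big[\begin{smallmatrix} 1 \\ x \end{smallmatrix}\big] \;=\; \big[\begin{smallmatrix} d^{-1} \\ dx \end{smallmatrix}\big] \;=\; \big[\begin{smallmatrix} 1 \\ d^{2} x \end{smallmatrix}\big].
\]
Thus the $Diag(K)$-orbit of $\big[\begin{smallmatrix} 1 \\ x \end{smallmatrix}\big]$ is exactly $\{\big[\begin{smallmatrix} 1 \\ y \end{smallmatrix}\big] \mid y \in x \cdot (K^{*})^{2} \}$, which depends only on the square class of $x$ in $K^{*}/(K^{*})^{2}$. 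Conversely, two representatives $\big[\begin{smallmatrix} 1 \\ x \end{smallmatrix}\big]$ and $\big[\begin{smallmatrix} 1 \\ x' \end{smallmatrix}\big]$ lie in the same orbit precisely when $x/x'$ is a square, so these orbits are in natural bijection with $K^{*}/(K^{*})^{2}$.

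Finally I would invoke the standard fact recalled in the introduction of the paper that for any finite extension $K$ of $\QQ_p$ with $p \neq 2$, one has $|K^{*}/(K^{*})^{2}|=4$, with representatives $\{1, \omega_K, S_K, S_K\omega_K\}$. Adding the two fixed endpoints to the four square-class orbits gives exactly $2+4 = 6$ orbits in total, and the stated representatives $\big[\begin{smallmatrix} 1 \\ \overline{m} \end{smallmatrix}\big]$, $\overline{m} \in K^{*}/(K^{*})^{2}$, are then obtained by picking any element from each square class. No real obstacle arises here — the lemma is a direct unwinding of the diagonal action on projective coordinates combined with the known structure of $K^{*}/(K^{*})^{2}$, which is presumably why the author flags it as trivial.
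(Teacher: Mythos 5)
Your proposal is correct and follows essentially the same route as the paper: both note that $Diag(K)$ fixes the two ends $\big[\begin{smallmatrix}1\\0\end{smallmatrix}\big]$ and $\big[\begin{smallmatrix}0\\1\end{smallmatrix}\big]$, compute that the orbit of $\big[\begin{smallmatrix}1\\x\end{smallmatrix}\big]$ is $\{\big[\begin{smallmatrix}1\\d^2x\end{smallmatrix}\big]\}$ so that orbits on the remaining ends are indexed by square classes, and use $\vert K^{*}/(K^{*})^{2}\vert=4$ for $p\neq 2$ to get $2+4=6$. Your write-up simply spells out the projective-coordinate computation that the paper leaves implicit.
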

\begin{proof}
The subgroup $Diag(K)$ fixes pointwise the ends  $\big[ \begin{smallmatrix} 1 \\ 0 \end{smallmatrix} \big]$ and $\big[\begin{smallmatrix} 0 \\ 1 \end{smallmatrix} \big]$.   The $Diag(K)$-orbit of $\big[ \begin{smallmatrix}  1 \\ \overline{m} \end{smallmatrix} \big]$, for each  $\overline{m} \in K^{*}/(K^{*})^{2}$, consists of vectors of the form $\big[ \begin{smallmatrix} 1 \\ d^{2} \overline{m} \end{smallmatrix} \big]$, these cover the entire boundary   $\partial \tri_{E} - \{\big[ \begin{smallmatrix}  0 \\ 1 \end{smallmatrix} \big] ,\big[  \begin{smallmatrix} 1 \\ 0 \end{smallmatrix} \big]\} $.
\end{proof}

As a trivial consequence of Lemma \ref{lem::nr_orbits_diag_SL} one obtains:
\begin{lemma}
\label{lem::nr_orbits_H_SL}
Let $\FF$ be a finite field-extension of $\QQ_p$, $A=\left( \begin{smallmatrix} 0 &1 \\ 1 &0 \end{smallmatrix} \right)$, and $\sigma_1:=\iota_A$ the corresponding $\FF$-involution of $\SL(2,\FF)$ with associated fixed point group $H_{\sigma_1}$. Then
$$H_{\sigma_1} = \left(\begin{smallmatrix} 1 &-1 \\ 1 &1 \end{smallmatrix} \right) Diag(\FF) \left(\begin{smallmatrix} 1 &-1 \\ 1 &1 \end{smallmatrix} \right)^{-1}, $$
$H_{\sigma_1}$ has exaclty $6$ orbits on the boundary $\partial \tri_{\FF}$, and the corresponding $H_{\sigma_1}$-orbits on the boundary $\partial \tri_{\FF}$ are given by the following $6$ representatives: $\{\big[  \begin{smallmatrix} 1-\overline{m} \\ 1+\overline{m} \end{smallmatrix} \big] \;\vert \; \overline{m} \in \FF^{*}/(\FF^{*})^{2} \} \cup \{ \big[  \begin{smallmatrix} 1 \\ -1 \end{smallmatrix} \big],\big[  \begin{smallmatrix} 1 \\ 1 \end{smallmatrix} \big]\} \subset \partial \tri_{\FF}$.
\end{lemma}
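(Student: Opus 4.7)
The plan is to reduce the lemma to Lemma \ref{lem::nr_orbits_diag_SL} by exhibiting an explicit conjugation that diagonalises $H_{\sigma_1}$.

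First, I would invoke Proposition \ref{prop::prop_shape_inv} with $m=1$ to rewrite
$$H_{\sigma_1}=\left\{\begin{pmatrix} x & y \\ y & x \end{pmatrix}\in\SL(2,\FF)\;\Big|\; x^2-y^2=1\right\}.$$
Setting $P:=\left(\begin{smallmatrix} 1 & -1 \\ 1 & 1\end{smallmatrix}\right)$, a direct $2\times 2$ matrix computation gives
$$P^{-1}\begin{pmatrix} x & y \\ y & x \end{pmatrix}P=\begin{pmatrix} x+y & 0 \\ 0 & x-y \end{pmatrix},$$
and since $x^2-y^2=(x+y)(x-y)=1$, this image lies in $Diag(\FF)$. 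Conversely, given $\left(\begin{smallmatrix} d^{-1} & 0 \\ 0 & d\end{smallmatrix}\right)\in Diag(\FF)$, solve $x+y=d^{-1}$, $x-y=d$ to get an element of $H_{\sigma_1}$. This verifies the conjugacy identity $H_{\sigma_1}=P\cdot Diag(\FF)\cdot P^{-1}$.

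Second, I would transfer the orbit structure. The action of $\SL(2,\FF)$ on $\partial\tri_{\FF}\cong P^{1}(\FF)$ intertwines conjugation by $P$ with the linear action of $P$ on $P^{1}(\FF)$; hence $H_{\sigma_1}\cdot\xi=P\cdot\bigl(Diag(\FF)\cdot P^{-1}\xi\bigr)$ for every $\xi\in\partial\tri_{\FF}$. Consequently the $H_{\sigma_1}$-orbits on $\partial\tri_{\FF}$ are in canonical bijection with the $Diag(\FF)$-orbits, and representatives of the former are obtained by applying $P$ to representatives of the latter. By Lemma \ref{lem::nr_orbits_diag_SL} the number of $Diag(\FF)$-orbits equals $2+|\FF^{*}/(\FF^{*})^{2}|=2+4=6$, with representatives $\big[\begin{smallmatrix}1\\0\end{smallmatrix}\big]$, $\big[\begin{smallmatrix}0\\1\end{smallmatrix}\big]$, and $\big[\begin{smallmatrix}1\\\overline{m}\end{smallmatrix}\big]$ for $\overline{m}\in \FF^{*}/(\FF^{*})^{2}$.

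Finally, applying $P$ to each representative I compute
$$P\begin{bmatrix}1\\0\end{bmatrix}=\begin{bmatrix}1\\1\end{bmatrix},\qquad P\begin{bmatrix}0\\1\end{bmatrix}=\begin{bmatrix}-1\\1\end{bmatrix}=\begin{bmatrix}1\\-1\end{bmatrix},\qquad P\begin{bmatrix}1\\\overline{m}\end{bmatrix}=\begin{bmatrix}1-\overline{m}\\1+\overline{m}\end{bmatrix},$$
which match exactly the six representatives listed in the statement. This proof contains no real obstacle; the only thing to be careful about is the correct orbit translation under the conjugation, i.e.\ that the bijection between $H_{\sigma_1}$-orbits and $Diag(\FF)$-orbits is implemented by left multiplication by $P$ on $P^{1}(\FF)$ rather than by $P^{-1}$, which is settled by the identity $(PgP^{-1})(P\xi)=P(g\xi)$.
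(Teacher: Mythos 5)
Your proposal is correct and follows exactly the route the paper intends: the paper states this lemma without proof as a ``trivial consequence'' of Lemma \ref{lem::nr_orbits_diag_SL}, and your argument simply makes explicit the conjugation $P^{-1}H_{\sigma_1}P=Diag(\FF)$ (via Proposition \ref{prop::prop_shape_inv} with $m=1$) and the resulting transfer of orbit representatives by applying $P$. The computations $P\big[\begin{smallmatrix}1\\0\end{smallmatrix}\big]=\big[\begin{smallmatrix}1\\1\end{smallmatrix}\big]$, $P\big[\begin{smallmatrix}0\\1\end{smallmatrix}\big]=\big[\begin{smallmatrix}1\\-1\end{smallmatrix}\big]$, $P\big[\begin{smallmatrix}1\\\overline{m}\end{smallmatrix}\big]=\big[\begin{smallmatrix}1-\overline{m}\\1+\overline{m}\end{smallmatrix}\big]$ all check out, so nothing is missing.
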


Let be the matrices in $\SL(2,\FF)$
$$g_1:=\left(\begin{smallmatrix} -1 &-1 \\ 2 &1 \end{smallmatrix} \right),  \qquad g_2:= \left(\begin{smallmatrix} -1 &-1 \\ 1 &0 \end{smallmatrix} \right), \qquad g_m:= \left(\begin{smallmatrix} 1+m & m-1 \\ \frac{m}{m-1} &1 \end{smallmatrix} \right) \text{ for } m \in \FF^{*}/(\FF^{*})^{2} \text{ with } m\neq 1$$
such that 
$$g_1\left(\big[ \begin{smallmatrix} 1 \\ -1 \end{smallmatrix} \big]\right)= \big[ \begin{smallmatrix} 0 \\ 1 \end{smallmatrix} \big], \qquad g_2\left(\big[ \begin{smallmatrix} 1 \\ 1 \end{smallmatrix} \big]\right)= \big[ \begin{smallmatrix} 0 \\ 1 \end{smallmatrix} \big], \qquad g_m\left(\big[ \begin{smallmatrix} 1-m \\ 1+m \end{smallmatrix} \big]\right)= \big[ \begin{smallmatrix} 0 \\ 1 \end{smallmatrix} \big].$$

Let $I :=\{ \Id_{\FF^2}, g_1,g_2, g_m \; \vert \; m \in \FF^{*}/(\FF^{*})^{2}, m\neq 1 \}$, and $I_m :=\{g_m \; \vert \; m \in \FF^{*}/(\FF^{*})^{2}, m\neq 1 \} \cup \{\Id_{\FF^2}\}$, with the convention that for $m = 1 \in \FF^{*}/(\FF^{*})^{2}$ we take $g_m := \Id_{\FF^2}$.

\begin{lemma}
\label{lem::decomp_H_SL}
With the above notation we have that:
\begin{enumerate}
\item
$\SL(2,\FF)=  \bigsqcup\limits_{g_i \in I} B^{-}g_i H_{\sigma_1}$
\item
for any $g_m \in I_m$,  the set $B^{-}g_m H_{\sigma_1} = U^{-}Tg_m  H_{\sigma_1}$ is open in $\SL(2,\FF)$, and moreover the set $\bigsqcup\limits_{g_m \in I_m} U^{-}Tg_m H_{\sigma_1}$ is dense in $\SL(2,\FF)$
\item
$\psi_{\sigma_1}(\bigsqcup\limits_{g_m \in I_m} U^{-}Tg_m )$ is open and dense in $\psi_{\sigma_1}(\SL(2,\FF))=[\SL(2,\FF)  \cdot_{\sigma_1} \Id_{\FF^2}]$, and so  $$\overline{\psi_{\sigma_1}(\bigsqcup\limits_{g_m \in I_m} U^{-}Tg_m)}=\overline{[\SL(2,\FF) \cdot_{\sigma_1} \Id_{\FF^2}]}.$$
\end{enumerate}
\end{lemma}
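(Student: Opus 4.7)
The proof strategy mirrors that of the analogous Lemma \ref{lem::decomp_E_SL}, with the boundary $\partial \tri_{\EE}$ replaced by $\partial \tri_{\FF}$ and the 5-orbit decomposition replaced by the 6-orbit decomposition of $\partial \tri_{\FF}$ under $H_{\sigma_1}$ from Lemma \ref{lem::nr_orbits_H_SL}. The key geometric fact to exploit is that $B^{-}$ is the stabilizer in $\SL(2,\FF)$ of $\xi_{0}:=\big[ \begin{smallmatrix} 0 \\ 1 \end{smallmatrix}\big]$, so the orbit map $g \mapsto g(\xi_{0})$ identifies $\SL(2,\FF)/B^{-}$ with $\partial \tri_{\FF}$ as a topological space (the cone topology agreeing with the quotient topology, via continuity and openness of the canonical projection).

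For part (1), I would take any $g \in \SL(2,\FF)$ and look at where $g^{-1}(\xi_{0})$ lies among the six $H_{\sigma_1}$-orbits of Lemma \ref{lem::nr_orbits_H_SL}. There exists a (unique-up-to-stabilizer) $h \in H_{\sigma_1}$ and a unique $g_i \in I$ such that $h g^{-1}(\xi_{0}) = g_i^{-1}(\xi_{0})$, equivalently $g_i h g^{-1} \in B^{-}$, giving $g \in B^{-} g_i H_{\sigma_1}$. The disjointness follows because two different representatives $g_i \neq g_j$ in $I$ correspond to inequivalent $H_{\sigma_1}$-orbits on $\partial \tri_{\FF}$. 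For part (2), I would first justify that the orbits of $g_m^{-1}(\xi_0)$ (for $g_m \in I_m$) are open in $\partial \tri_{\FF}$—this is the analogue of the openness argument in Lemma \ref{lem::nr_orbits_SL(F)_SL(E)}, and can be obtained directly from the conjugation $H_{\sigma_1} = \left( \begin{smallmatrix} 1 & -1 \\ 1 & 1 \end{smallmatrix} \right) Diag(\FF) \left( \begin{smallmatrix} 1 & -1 \\ 1 & 1 \end{smallmatrix} \right)^{-1}$ combined with Lemma \ref{lem::nr_orbits_diag_SL}. Pulling back along $g\mapsto g^{-1}(\xi_0)$ yields openness of $B^{-}g_m H_{\sigma_1}$, and the factorization $B^{-} = U^{-}T$ gives the second description. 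For density, the complement of $\bigsqcup_{g_m \in I_m} B^{-} g_m H_{\sigma_1}$ is $B^{-} g_1 H_{\sigma_1} \cup B^{-} g_2 H_{\sigma_1}$, which coincides with the preimage of the two $H_{\sigma_1}$-fixed endpoints $\big[\begin{smallmatrix} 1 \\ 1 \end{smallmatrix}\big]$ and $\big[\begin{smallmatrix} 1 \\ -1 \end{smallmatrix}\big]$. Each such preimage is a single left coset of the Borel stabilizer of the corresponding fixed endpoint, hence a closed subset homeomorphic to a Borel subgroup—a codimension-one closed submanifold of the $3$-dimensional $\FF$-analytic group $\SL(2,\FF)$—so each has empty interior, and their union is nowhere dense.

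For part (3), the key observation is that $\psi_{\sigma_1}$ is right $H_{\sigma_1}$-invariant, since for $h \in H_{\sigma_1}$ we have $\psi_{\sigma_1}(gh) = [gh\,\sigma_1(h^{-1})\sigma_1(g^{-1})] = [g \sigma_1(g^{-1})] = \psi_{\sigma_1}(g)$ using $\sigma_1(h)=h$. So $\psi_{\sigma_1}$ factors through the continuous open projection $\pi : \SL(2,\FF) \to \SL(2,\FF)/H_{\sigma_1}$, and I would invoke the standard fact that for a $\sigma$-compact locally compact group acting continuously on a Hausdorff space with locally closed orbit, the induced map $\SL(2,\FF)/H_{\sigma_1} \to \psi_{\sigma_1}(\SL(2,\FF))$ is a homeomorphism. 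Then openness and density of the image set follow directly from the corresponding properties in part (2), together with continuity of $\psi_{\sigma_1}$ for the closure statement. The main obstacle I anticipate is the density argument in part (2): it requires identifying the two closed double cosets as translates of Borel subgroups and confirming they are nowhere dense, which is the one step that leaves the purely orbit-combinatorial framework and uses the manifold/topological structure of $\SL(2,\FF)$; the openness in (3) can be routinely handled once the orbit-map homeomorphism theorem is invoked.
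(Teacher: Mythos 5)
Your proposal is correct and follows the same skeleton as the paper's proof: part (1) is argued identically, via the six $H_{\sigma_1}$-orbits on $\partial \tri_\FF$ from Lemma \ref{lem::nr_orbits_H_SL} and the identification of $\SL(2,\FF)/B^{-}$ with $\partial\tri_\FF$, and the openness in part (2) is obtained the same way, from the conjugacy of $H_{\sigma_1}$ to $Diag(\FF)$ together with the agreement of the cone and quotient topologies. You diverge in two sub-steps. For the density in (2), the paper constructs explicit sequences $b_n g_m h_n \to g_1$ or $g_2$ with $b_n \in B^{-}$, $h_n \in H_{\sigma_1}$, using that the attracting and repelling endpoints $\big[\begin{smallmatrix}1\\ \pm 1\end{smallmatrix}\big]$ of $H_{\sigma_1}$ are the only accumulation points in $\partial\tri_\FF$ of each open boundary orbit; this shows that the closure of each single open double coset $B^{-}g_m H_{\sigma_1}$ already contains all of $B^{-}g_1 H_{\sigma_1}\cup B^{-}g_2 H_{\sigma_1}$. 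You instead observe that this union is precisely the complement of $\bigsqcup_{g_m\in I_m}B^{-}g_m H_{\sigma_1}$ (by part (1)) and consists of two translates of Borel subgroups, hence is closed and nowhere dense. Your route is shorter and avoids the sequence construction, at the cost of invoking the codimension-one analytic structure of the Borel; the paper's route stays entirely within the boundary-dynamics picture and yields the marginally stronger accumulation statement. For part (3) the paper only cites continuity of $\psi_{\sigma_1}$, which gives density of the image but does not by itself give openness in the orbit; your factorization through the open projection $\SL(2,\FF)\to\SL(2,\FF)/H_{\sigma_1}$ and the orbit-map homeomorphism is the more careful way to obtain openness (it does rely on the orbit being locally closed, which holds here since the action is algebraic), so on this point you are supplying a justification the paper leaves implicit.
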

\begin{proof}
Let $g \in \SL(2,\FF)$ and let $\xi_0 := \big[  \begin{smallmatrix} 0 \\ 1 \end{smallmatrix} \big]$. Notice the $\SL(2,\FF)$-stabilizer of $\xi_0$ is exactly $B^{-} =\left \{ \begin{pmatrix} x & 0 \\ y &x^{-1}  \end{pmatrix} |  \; y \in \FF, x \in \FF^*  \right\}$.

If $g^{-1}(\xi_0)=\xi_0$ then $g\in B^{-}$ and thus $g \in  B^{-} H_{\sigma_1}$. If $g^{-1}(\xi_0) \neq \xi_0$ then there are $6$ cases to consider each corresponding to the $H_{\sigma_1}$-orbits in $\partial \tri_{\FF}$. By applying accordingly an element $h \in H_{\sigma_1}$ we get that $hg^{-1}(\xi_0) \in \{\big[  \begin{smallmatrix} 1-\overline{m} \\ 1+\overline{m} \end{smallmatrix} \big] \;\vert \; \overline{m} \in \FF^{*}/(\FF^{*})^{2} \} \cup \{ \big[  \begin{smallmatrix} 1 \\ -1 \end{smallmatrix} \big],\big[  \begin{smallmatrix} 1 \\ 1 \end{smallmatrix} \big]\}$. Further, by multiplying again accordingly with some element $g_i$ from $I$ we obtain $g_i hg^{-1}(\xi_0) = \xi_0$, and thus $g^{-1}\in  H_{\sigma_1} g_i^{-1}B^{-}$, giving $g \in  B^{-}g_i H_{\sigma_1}$ as required. This proves part (1) of the lemma. 

Let us prove part (2). Take $g_m \in I_m$. Since by Lemma \ref{lem::nr_orbits_H_SL} the group $H_{\sigma_1}$ is conjugate to $Diag(\FF)$, and since the $Diag(\FF)$-orbit of the vector $\big[ \begin{smallmatrix} 1 \\ \overline{m} \end{smallmatrix} \big] \in \partial \tri_{\FF}$,  where  $\overline{m} \in \FF^{*}/(\FF^{*})^{2}$, is clearly open with respect to the cone topology on $\SL(2,\FF)/B^{-} \cong \partial \tri_{\FF}$, then the $H_{\sigma_1}$-orbit of $\big[  \begin{smallmatrix} 1-\overline{m} \\ 1+\overline{m} \end{smallmatrix} \big] \in \partial \tri_{\FF}$ is open in $\partial \tri_{\FF}$. Because the cone topology on $\SL(2,\FF)/B^{-} \cong \partial \tri_{\FF}$ is equivalent to the quotient topology on $\SL(2,\FF)/B^{-}$ induced from the continuous and open canonical projection of $\SL(2,\FF)$ to  $\SL(2,\FF)/B^{-}$, we have $H_{\sigma_1} g_{m}^{-1} B^{-}$ is open in $\SL(2,\FF)$, and by taking the inverse map which is continuous, the set $B^{-}g_m H_{\sigma_1}$ is open in $\SL(2,\FF)$ as well.

It remains to prove that  $\bigsqcup\limits_{g_m \in I_m} U^{-}Tg_m H_{\sigma_1}$ is dense in $\SL(2,\FF)$, and by part (1) it is enough to show that for any fixed $g_m \in I_m$, the accumulation points of the open set $B^{-}g_m H_{\sigma_1}$ are exactly the elements of $B^{-}g_1 H_{\sigma_1} \cup B^{-}g_2 H_{\sigma_1}$. Indeed, since $H_{\sigma_1}$ is conjugate to the diagonal subgroup $Diag(\FF)$, its attacting and repealing endpoints are $\{ \big[  \begin{smallmatrix} 1 \\ -1 \end{smallmatrix} \big],\big[  \begin{smallmatrix} 1 \\ 1 \end{smallmatrix} \big]\}$. Notice that for any fixed $g_m \in I_m$, the open set  $H_{\sigma_1}\big[  \begin{smallmatrix} 1-\overline{m} \\ 1+\overline{m} \end{smallmatrix} \big]$ has $\{ \big[  \begin{smallmatrix} 1 \\ -1 \end{smallmatrix} \big],\big[  \begin{smallmatrix} 1 \\ 1 \end{smallmatrix} \big]\}$ as its unique accumulation points in $\partial \tri_{\FF}$. Using again the cone topology on $\SL(2,\FF)/B^{-} \cong \partial \tri_{\FF}$, there exist sequences $\{b_n\}_{n\geq 1} \subset B^{-}$ and $\{h_n\}_{n\geq 1} \subset H_{\sigma_1}$ such that $\{b_n g_m h_n \}_{n\geq 1}$ converges to $g_1$, or $g_2$, with respect to the topology on $\SL(2,\FF)$. Then part (2) follows.

Part (3) follows easily from part (2), the fact that $B^{-}=U^{-}T$, and the continuity of the map $ \psi_{\sigma_1}$.
\end{proof}

By Lemma \ref{lem::decomp_H_SL} it is advised to study the sets $(U^{-}Tg_i)\cdot_{\sigma_1}\Id_{\FF^{2}}$, for any $g_i \in I$, as well as their closure.

We have
\begin{equation}
\label{equ::1}
\begin{split}
(U^{-}Tg_1 )\cdot_{\sigma_1}\Id_{\FF^{2}}&= \left \{ \begin{pmatrix} x & 0 \\ a & x^{-1} \end{pmatrix} \begin{pmatrix} -1 &-1 \\ 2 &1 \end{pmatrix} \begin{pmatrix} 0 & 1 \\ 1 &0  \end{pmatrix} \begin{pmatrix} 1 &1 \\ -2 &-1 \end{pmatrix}  \begin{pmatrix} x^{-1} & 0 \\ -a & x  \end{pmatrix} \begin{pmatrix} 0 & 1 \\ 1 &0  \end{pmatrix} \; | \; x \in \FF^*, a\in\FF  \right\}\\
&= \left \{ \begin{pmatrix} x & 0 \\ a & x^{-1}  \end{pmatrix} \begin{pmatrix} 1 &0 \\ -3 & -1 \end{pmatrix} \begin{pmatrix} 0 & x^{-1} \\ x &-a  \end{pmatrix} \;| \; x \in \FF^*, a\in\FF  \right\}\\
&= \left \{ \begin{pmatrix} 0 & 1 \\ -1 &2 a x^{-1}-3x^{-2}  \end{pmatrix} \;|\; x \in \FF^*, a\in\FF  \right\} \Rightarrow \\
& \Rightarrow [ (U^{-}T g_1)\cdot_{\sigma_1}\Id_{\FF^{2}}] = \{[0, 1; -1, 2 a x^{-1}-3x^{-2}] \;| \; x \in \FF^*, a \in\FF \} \subset [\SL(2,\FF) \cdot_{\sigma_1} \Id_{\FF^2}]. \\
 \end{split}
\end{equation}

\begin{equation}
\label{equ::2}
\begin{split}
(U^{-}Tg_2 )\cdot_{\sigma_1}\Id_{\FF^{2}}&= \left \{ \begin{pmatrix} x & 0 \\ a & x^{-1} \end{pmatrix} \begin{pmatrix} -1 &-1 \\ 1 &0 \end{pmatrix} \begin{pmatrix} 0 & 1 \\ 1 &0  \end{pmatrix} \begin{pmatrix} 0&1 \\ -1 &-1 \end{pmatrix}  \begin{pmatrix} x^{-1} & 0 \\ -a & x  \end{pmatrix} \begin{pmatrix} 0 & 1 \\ 1 &0  \end{pmatrix} \; | \; x \in \FF^*, a\in\FF  \right\}\\
&= \left \{ \begin{pmatrix} x & 0 \\ a & x^{-1}  \end{pmatrix} \begin{pmatrix} 1 &0 \\ -1 & -1 \end{pmatrix} \begin{pmatrix} 0 & x^{-1} \\ x &-a  \end{pmatrix} \;| \; x \in \FF^*, a\in\FF  \right\}\\
&= \left \{ \begin{pmatrix} 0 & 1 \\ -1 &2 a x^{-1}-x^{-2}  \end{pmatrix} \;|\; x \in \FF^*, a\in\FF  \right\} \Rightarrow \\
& \Rightarrow [ (U^{-}T g_2 )\cdot_{\sigma_1}\Id_{\FF^{2}}] = \{[0, 1; -1,  a x^{-1}-x^{-2}] \;| \; x \in \FF^*, a \in\FF \}  \subset [\SL(2,\FF) \cdot_{\sigma_1} \Id_{\FF^2}]. \\
\end{split}
\end{equation}

Now for every $m \in \FF^{*}/(\FF^{*})^{2}$, with $m\neq 1$, we have:
\begin{equation}
\label{equ::3}
\begin{split}
&(U^{-}Tg_m )\cdot_{\sigma_1}\Id_{\FF^{2}}= \left \{ \begin{pmatrix} x & 0 \\ a & x^{-1} \end{pmatrix} \begin{pmatrix} 1+m &m-1 \\ \frac{m}{m-1} &1 \end{pmatrix} \begin{pmatrix} 0 & 1 \\ 1 &0  \end{pmatrix} \begin{pmatrix} 1&1-m \\  \frac{m}{1-m} &1+m \end{pmatrix}  \begin{pmatrix} x^{-1} & 0 \\ -a & x  \end{pmatrix} \begin{pmatrix} 0 & 1 \\ 1 &0  \end{pmatrix} \; | \; x \in \FF^*, a\in\FF  \right\}\\
&= \left \{ \begin{pmatrix} x & 0 \\ a & x^{-1}  \end{pmatrix} \begin{pmatrix} \frac{3m-1}{1-m} &4m \\ \frac{1-2m}{(1-m)^{2}} & \frac{3m-1}{m-1} \end{pmatrix} \begin{pmatrix} 0 & x^{-1} \\ x &-a  \end{pmatrix} \;| \; x \in \FF^*, a\in\FF  \right\}\\
&= \left \{ \begin{pmatrix} 4mx^2 & \frac{3m-1}{1-m} - 4max\\ 4max + \frac{3m-1}{m-1} &2 a x^{-1} \frac{3m-1}{1-m} +x^{-2} \frac{1-2m}{(1-m)^2}-4ma^2  \end{pmatrix} \;|\; x \in \FF^*, a\in\FF  \right\} \Rightarrow \\
& \Rightarrow [ (U^{-}T g_m )\cdot_{\sigma_1}\Id_{\FF^{2}}]=\\
& = \{[1, \frac{3m-1}{(1-m)4mx^2} -ax^{-1}; ax^{-1}+\frac{3m-1}{(m-1)4mx^2},  \frac{2a}{4mx^{2}} \frac{3m-1}{1-m}+ \frac{1-2m}{(1-m)^2 4mx^4}- a^2x^{-2}] \;| \; x \in \FF^*, a \in\FF \}. \\
\end{split}
\end{equation}
Then the only accumulation points of the set $[ (U^{-}T g_m)\cdot_{\sigma_1}\Id_{\FF^{2}}]$ are the points $[1,-b;b,-b^2]$ with $b \in \FF$. Those elements appear when taking  sequences $\{x_{n_k}=\omega_{\FF}^{-n_k}\}_{k\geq 0}$ and $\{a_{n_k}:=bx_{n_k}\}_{k\geq 0}$,  with $ 0< n_k \to \infty$ and $b \in \FF$. For sequences of the form $\{x_{n_k}=\omega_{\FF}^{n_k}\}_{k\geq 0}$ we obtain no limits.

For the case $m \in \FF^{*}/(\FF^{*})^{2}$ with $m=1$, by our convention above, we take $g_m:=\Id_{\FF^2}$ and we have:
\begin{equation}
\label{equ::4}
\begin{split}
(U^{-}T )\cdot_{\sigma_1}\Id_{\FF^{2}}&= \left \{ \begin{pmatrix} x & 0 \\ a & x^{-1}  \end{pmatrix}\begin{pmatrix} 0 & 1 \\ 1 &0  \end{pmatrix}  \begin{pmatrix} x^{-1} & 0 \\ -a & x  \end{pmatrix} \begin{pmatrix} 0 & 1 \\ 1 &0  \end{pmatrix} \; | \; x \in \FF^*, a\in\FF  \right\}\\
&= \left \{ \begin{pmatrix} 0 & x \\ x^{-1} & a  \end{pmatrix} \begin{pmatrix} 0 & x^{-1} \\ x &-a  \end{pmatrix} \;| \; x \in \FF^*, a\in\FF  \right\}\\
& =\left \{ \begin{pmatrix} x^{2} & -xa \\ ax & -a^{2}+x^{-2}  \end{pmatrix} \;|\; x \in \FF^*, a\in\FF  \right\} \Rightarrow \\
& \Rightarrow [ (U^{-}T )\cdot_{\sigma_1}\Id_{\FF^{2}}] = \{[1,-x^{-1}a;ax^{-1}, -a^{2} x^{-2}+x^{-4}] \;| \; x \in \FF^*, a \in\FF \}. \\
\end{split}
\end{equation}
Then the only accumulation points of the set $[ (U^{-}T g_m)\cdot_{\sigma_1}\Id_{\FF^{2}}]$ are the points $[1,-b;b,-b^2]$ with $b \in \FF$. Those elements appear when taking  sequences $\{x_{n_k}=\omega_{\FF}^{-n_k}\}_{k\geq 0}$ and $\{a_{n_k}:=bx_{n_k}\}_{k\geq 0}$,  with $ 0< n_k \to \infty$ and $b \in \FF$. For sequences of the form $\{x_{n_k}=\omega_{\FF}^{n_k}\}_{k\geq 0}$ we obtain no limits. 

 As well, notice that the set $\{[1,-b;b,-b^2]\; | \; b\in \FF \}$ is the $U^{-}T$-orbit of the element $[1,0;0,0] \in \P(\End(\FF^2))$ with respect to the action $\cdot_{\sigma_1}$.

As a result of Lemma \ref{lem::decomp_H_SL} and the above calculations in (\ref{equ::1}) to (\ref{equ::4}) we have:
\begin{lemma}
\label{lem::X0_int_G_H}
Recall the notation $I_m :=\{g_m \; \vert \; m \in \FF^{*}/(\FF^{*})^{2}, m\neq 1 \} \cup \{\Id_{\FF^2}\}$. Then:
$$X_{\sigma_1,0} \cap \psi_{\sigma_1}(\SL(2,\FF))= [(\bigsqcup\limits_{g_m \in I_m} U^{-}Tg_m)\cdot_{\sigma_1}\Id_{\FF^{2}}]$$
$$X_{\sigma_1,0} =\overline{[ (\bigsqcup\limits_{g_m \in I_m} U^{-}Tg_m)\cdot_{\sigma_1}\Id_{\FF^{2}}]} \cap  \P_{0}$$
$$X_{\sigma_1}= \bigcup\limits_{g \in \SL(2,\FF)} g\cdot_{\sigma_1}X_{\sigma_1,0}.$$
\end{lemma}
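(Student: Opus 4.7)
The plan is to mirror the structure of the proofs of Lemma~\ref{lem::X0_int_G} and Lemma~\ref{lem::X0_int_G_H_E}: three equalities, each extracted either from the decomposition lemma (Lemma~\ref{lem::decomp_H_SL}) or from the explicit orbit computations (\ref{equ::1})--(\ref{equ::4}). The only genuinely new ingredient is verifying that the pieces arising from $g_1$ and $g_2$ in the decomposition actually lie outside the big cell $\P_0$.

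First, I would establish the equation $X_{\sigma_1,0}\cap\psi_{\sigma_1}(\SL(2,\FF))=[(\bigsqcup_{g_m\in I_m}U^{-}Tg_m)\cdot_{\sigma_1}\Id_{\FF^{2}}]$. Since $\sigma_1(h)=h$ for every $h\in H_{\sigma_1}$, the map $\psi_{\sigma_1}$ is constant on right $H_{\sigma_1}$-cosets, so combining $\SL(2,\FF)=\bigsqcup_{g_i\in I}B^{-}g_iH_{\sigma_1}$ from Lemma~\ref{lem::decomp_H_SL}(1) with $B^{-}=U^{-}T$ yields $\psi_{\sigma_1}(\SL(2,\FF))=\bigcup_{g_i\in I}\psi_{\sigma_1}(U^{-}Tg_i)$. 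Then I would inspect formulas (\ref{equ::1})--(\ref{equ::4}): the images of $U^{-}Tg_1$ and $U^{-}Tg_2$ consist of points of the form $[0,1;-1,\ast]$ whose first coordinate vanishes, hence lie outside $\P_0$; whereas for each $g_m\in I_m$ the computed image has $(1,1)$-entry the nonzero scalar $4mx^{2}$ (or $x^{2}$ when $m=1$), hence lies in $\P_0$. Intersecting the decomposition of $\psi_{\sigma_1}(\SL(2,\FF))$ with $\P_0$ therefore kills the $g_1,g_2$ summands and leaves precisely the claimed set.

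Second, for the equation $X_{\sigma_1,0}=\overline{[(\bigsqcup_{g_m\in I_m}U^{-}Tg_m)\cdot_{\sigma_1}\Id_{\FF^{2}}]}\cap\P_0$, I would intersect both sides of the identity $\overline{\psi_{\sigma_1}(\bigsqcup_{g_m\in I_m}U^{-}Tg_m)}=X_{\sigma_1}$ from Lemma~\ref{lem::decomp_H_SL}(3) with the open subset $\P_0\subset\P(\End(\FF^{2}))$; the claim follows at once from the definition $X_{\sigma_1,0}=X_{\sigma_1}\cap\P_0$.

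Finally, for the covering $X_{\sigma_1}=\bigcup_{g\in\SL(2,\FF)}g\cdot_{\sigma_1}X_{\sigma_1,0}$, let $C=[x_1,x_2;x_3,x_4]\in X_{\sigma_1}$. If $x_1\neq 0$ then $C\in X_{\sigma_1,0}$ and I am done. Otherwise, since $C$ is represented by some nonzero matrix, I would exhibit $g\in\SL(2,\FF)$ such that $\rho(g)C\rho(\sigma_1(g^{-1}))$ has nonzero $(1,1)$-entry. A short case analysis on which of $x_2,x_3,x_4$ is nonzero --- using a Weyl-element type matrix on the left, together with the observation that $\sigma_1=\iota_A$ with $A=\left(\begin{smallmatrix}0&1\\m&0\end{smallmatrix}\right)$ couples the right factor to the left --- moves a nonzero entry of a representative of $C$ into the $(1,1)$-slot. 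This third step is the main mechanical obstacle: the twist by $\sigma_1$ prevents choosing the two sides of the multiplication independently, unlike in Section~\ref{sec::diag}. Still, the authors dismiss the analogous step as routine in Lemmas~\ref{lem::X0_int_G} and~\ref{lem::X0_int_G_H_E}, and I expect it to be similarly routine here.
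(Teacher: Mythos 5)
Your proposal is correct and follows essentially the same route as the paper's own proof: the first equality is read off from the decomposition of Lemma \ref{lem::decomp_H_SL} together with the computations (\ref{equ::1})--(\ref{equ::4}) (you merely make explicit the observation, which the paper calls ``clear,'' that the $g_1,g_2$ pieces have vanishing $(1,1)$-entry and so fall outside $\P_0$ while the $g_m$ pieces do not), the second equality is obtained by intersecting Lemma \ref{lem::decomp_H_SL}(3) with $\P_0$, and the third is handled by the same ``move a nonzero entry into the $(1,1)$-slot'' argument that the paper likewise leaves as an easy calculation.
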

\begin{proof}
It is clear from the above calculation that
$$ [(\bigsqcup\limits_{g_m \in I_m} U^{-}Tg_m)\cdot_{\sigma_1}\Id_{\FF^{2}}] = X_{\sigma_1,0} \cap \psi_{\sigma_1}(\SL(2,\FF))=  \P_{0} \cap \psi_{\sigma_1}(\SL(2,\FF)). $$
Since $\overline{[ (\bigsqcup\limits_{g_m \in I_m} U^{-}Tg_m)\cdot_{\sigma_1}\Id_{\FF^{2}}]} =  X_{\sigma_1}$ by  Lemma \ref{lem::decomp_H_SL}(3), we immediately have 
$$X_{\sigma_1,0} =\overline{[ (\bigsqcup\limits_{g_m \in I_m} U^{-}Tg_m)\cdot_{\sigma_1}\Id_{\FF^{2}}]} \cap  \P_{0}.$$

Let $C \in X_{\sigma_1}$. As $C \in \P(\End(\FF^2))$ we have $C = [x_1, x_2; x_3,x_4]$. If $x_1 \neq 0$ then $C \in X_{\sigma_1,0}$ and we are done. If $x_1 =0$, then by some easy calculations one can arrange for some $g \in \SL(2,\FF)$, such that $\rho(g) C \rho(\sigma_1(g^{-1})) \in \P_0$. Then we are done.
\end{proof}

\begin{proof}[Proof of Theorem \ref{thm::all_orbits}]
By Lemma \ref{lem::X0_int_G_H}, the $\SL(2,\FF)$-orbits in $X_{\sigma_1}$ with respect to the $\cdot_{\sigma_1}$ action are determined by the $\SL(2,\FF)$-orbits given by the points of the set $X_{\sigma_1,0}$. Moreover, the calculations in (\ref{equ::3}) and (\ref{equ::4}) show the only accumulation points of the set $[ (U^{-}T g_m)\cdot_{\sigma_1}\Id_{\FF^{2}}]$ in $\P_0$ are the points $[1,-b;b,-b^2]$ with $b \in \FF$, and the only accumulation point of the set $[T\cdot_{\sigma_1}\Id_{\FF^{2}}]$ in $\P_0$ is $[1,0;0,0]$.  As well, the set $\{[1,-b;b,-b^2]\; | \; b\in \FF \}$ is the $U^{-}T$-orbit of the element $[1,0;0,0] \in \P(\End(\FF^2))$ with respect to the action $\cdot_{\sigma_1}$. Further, an easy computation 
$$\begin{pmatrix} a & b \\ c & d \end{pmatrix} \begin{pmatrix} 1 & 0 \\ 0 & 0 \end{pmatrix} \begin{pmatrix} 0 & 1 \\ 1 & 0 \end{pmatrix} \begin{pmatrix} d & -b \\ -c & a \end{pmatrix} \begin{pmatrix} 0 & 1 \\ 1 & 0 \end{pmatrix} = \begin{pmatrix} a & b \\ c & d \end{pmatrix} \begin{pmatrix} 1 & 0 \\ 0 & 0 \end{pmatrix} \begin{pmatrix}a & -c \\ -b & d \end{pmatrix}= \begin{pmatrix} a^{2} & -ac \\ ca & -c^{2} \end{pmatrix}$$
shows that the $\SL(2,\FF)$-orbit of the point $[1,0;0,0]$ is indeed closed in $X_{\sigma_1}$.

As $[(\bigsqcup\limits_{g_m \in I_m} U^{-}Tg_m)\cdot_{\sigma_1}\Id_{\FF^{2}}]$ is already part of the $\SL(2,\FF)$-orbit of the point $[1,0;0,1] = [\Id_{\FF^{2}}]$ with respect to the $\cdot_{\sigma_1}$ action, the theorem follows.
\end{proof}

\begin{corollary}
\label{cor::1_case}
With respect to the action $\cdot_{\sigma_1}$, the stabilizer in $\SL(2,\FF)$ of the point $[1,0;0,1]=[\Id_{\FF^{2}}]$ is $H_{\sigma_1}$, and the stabilizer in $\SL(2,\FF)$ of the point $[1,0;0,0]$ is  the subgroup $\{ \mu _2 \cdot \left( \begin{smallmatrix} 1& b \\ 0 &1  \end{smallmatrix} \right)\; |\; b \in \FF  \}$ of the Borel  $B^{+}\leq \SL(2,\FF)$, where $\mu _2$ is the group of $2^{nd}$ roots of unity in  $\FF$.
\end{corollary}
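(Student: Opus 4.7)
The plan is to settle both stabilizer claims by a direct matrix computation, in the same spirit as the proof of the corresponding corollary in Section \ref{sec::all_orbits_E}.

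First, for the stabilizer of $\Id_{\FF^2}$: by definition of the action one has $g\cdot_{\sigma_1}\Id_{\FF^{2}} = g\,\sigma_1(g^{-1})$, so requiring this to equal $\Id_{\FF^{2}}$ amounts to $\sigma_1(g^{-1}) = g^{-1}$, that is, $g \in H_{\sigma_1}$. This is immediate from the definition of the fixed point group, and no further work is required.

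Next, for the stabilizer of $\left(\begin{smallmatrix} 1 & 0 \\ 0 & 0 \end{smallmatrix}\right)$: I would take a general $g = \left(\begin{smallmatrix} a & b \\ c & d \end{smallmatrix}\right) \in \SL(2,\FF)$, use that $\sigma_1 = \iota_{A_1}$ with $A_1 = \left(\begin{smallmatrix} 0 & 1 \\ 1 & 0 \end{smallmatrix}\right) = A_1^{-1}$, and first compute
$$ \sigma_1(g^{-1}) = A_1 \left(\begin{smallmatrix} d & -b \\ -c & a \end{smallmatrix}\right) A_1 = \left(\begin{smallmatrix} a & -c \\ -b & d \end{smallmatrix}\right). $$
Multiplying out the triple product then gives
$$ g \left(\begin{smallmatrix} 1 & 0 \\ 0 & 0 \end{smallmatrix}\right) \sigma_1(g^{-1}) = \left(\begin{smallmatrix} a & 0 \\ c & 0 \end{smallmatrix}\right) \left(\begin{smallmatrix} a & -c \\ -b & d \end{smallmatrix}\right) = \left(\begin{smallmatrix} a^2 & -ac \\ ac & -c^2 \end{smallmatrix}\right). $$
Demanding that this equal $\left(\begin{smallmatrix} 1 & 0 \\ 0 & 0 \end{smallmatrix}\right)$ immediately forces $c = 0$ and $a^2 = 1$; combined with the determinant constraint $\det g = ad = 1$, this yields $a = d = \pm 1$ with $b \in \FF$ arbitrary. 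Hence the stabilizer consists exactly of the matrices $\pm\left(\begin{smallmatrix} 1 & b \\ 0 & 1 \end{smallmatrix}\right)$, $b \in \FF$, i.e., the subgroup $\mu_2\cdot U^{+}$ of $B^{+}$.

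I expect no genuine obstacle: each stabilizer is cut out by a transparent system of equations in the matrix entries of $g$, and the answer can be read off at once. The only minor point worth attention is that in the second part the defining equation must hold as an identity in $\End(\FF^{2})$ (not merely projectively), which is precisely what introduces the central factor $\mu_2$ into the answer.
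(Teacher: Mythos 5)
Your proof is correct and takes essentially the same route as the paper: the paper's entire argument is the single computation $g\left(\begin{smallmatrix}1&0\\0&0\end{smallmatrix}\right)\sigma_1(g^{-1})=\left(\begin{smallmatrix}a^{2}&-ac\\ca&-c^{2}\end{smallmatrix}\right)$, which you reproduce and then solve for $c=0$, $a^{2}=1$. Your closing remark that the equation must hold in $\End(\FF^{2})$ rather than merely in $\P(\End(\FF^{2}))$ is exactly the right reading of the paper's intent (projectively the stabilizer would be all of $B^{+}$), and you make explicit a point the paper's one-line proof leaves implicit.
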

\begin{proof}
This just follows from the computation
$$ \begin{pmatrix} a & b \\ c & d \end{pmatrix} \begin{pmatrix} 1 & 0 \\ 0 & 0 \end{pmatrix} \begin{pmatrix}a & -c \\ -b & d \end{pmatrix}= \begin{pmatrix} a^{2} & -ac \\ ca & -c^{2} \end{pmatrix}.$$
\end{proof}


\textbf{Case $\overline{m} \neq \overline{1}$.}

Recall for $m\in \FF^{*}/(\FF^{*})^{2}$ with $ \overline{m} \neq \overline{1}$ we consider $A:=\left( \begin{smallmatrix} 0 &1 \\ m &0 \end{smallmatrix} \right)$ which has associated the $\FF$-involution of $\SL(2,\FF)$ given by $\sigma_m:=\iota_A$.

From \cite{CL_2} also recall Lemma 5.3:
\begin{lemma}
\label{lem::nr_orbits_H_SL_m}
Let $\FF$ be a finite field extension of $\QQ_p$, $A=\left( \begin{smallmatrix} 0 &1 \\ m &0 \end{smallmatrix} \right)$, with $m\in \FF^{*}/(\FF^{*})^{2}$, $m\neq 1$ and $\sigma_m:=\iota_A$ the corresponding $\FF$-involution of $\SL(2,\FF)$.  
Then $H_{\sigma_m}:=\{g \in \SL(2,\FF) \; \vert \; \sigma_m(g) = g\}$ has at most $8$ orbits on the boundary $\partial T_\FF$.
\end{lemma}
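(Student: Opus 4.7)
The plan is to exploit the explicit algebraic description of $H_{\sigma_m}$ furnished by Theorem \ref{lem::geom_k_inv_SL_2} and reduce the orbit count on $P^1\FF$ to counting equivalence classes under a discrete invariant valued in $\FF^*/(\FF^*)^2$, a set of cardinality $4$. Since $\bar m \neq \bar 1$, the theorem tells us $H_{\sigma_m}$ is compact and abelian, its two fixed points $\xi_\pm$ lie in $P^1K_m\setminus P^1\FF$, and
\[
H_{\sigma_m}=\Big\{ h_{x,y}:=\begin{pmatrix} x & y \\ my & x \end{pmatrix} \;\Big\vert\; x,y\in\FF,\; x^{2}-my^{2}=1 \Big\}.
\]
Thus $H_{\sigma_m}$ has no fixed points in $\partial\tri_\FF$, and its action is faithful modulo $\{\pm\Id\}$.

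The heart of the proof is a direct orbit-equivalence computation on $\FF\subset P^1\FF$. Given $t,t'\in\FF$, the equation $h_{x,y}\cdot t = t'$, where $h_{x,y}$ acts by the M\"obius transformation $t\mapsto (xt+y)/(myt+x)$, is equivalent to the linear condition
\[
x(t-t')+y(1-mtt')=0.
\]
Parametrising the corresponding line in $\FF^2$ and substituting into the conic $x^2-my^2=1$, together with the algebraic identity
\[
(1-mtt')^2 - m(t-t')^2 \;=\; (1-mt^2)(1-mt'^2),
\]
reduces the existence of a solution $(x,y)\in\FF^2$ to the condition that $(1-mt^2)(1-mt'^2)\in (\FF^*)^2$. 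Since $m$ is a non-square, $1-mt^2\neq 0$ for every $t\in\FF$, so the invariant $\phi:\FF\to\FF^*/(\FF^*)^2$, $t\mapsto \overline{1-mt^2}$, is well-defined, and two finite points lie in the same $H_{\sigma_m}$-orbit if and only if they share the same $\phi$-value.

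Because $|\FF^*/(\FF^*)^2|=4$, the map $\phi$ produces at most $4$ orbits on $\FF$. For the point $\infty=[1:0]$, a separate short calculation shows $h_{x,y}\cdot\infty = x/(my)$ for $y\neq 0$, so $\infty$ lies in the orbit of some $t\in\FF$ iff $-m(1-mt^2)\in (\FF^*)^2$; at worst $\infty$ contributes a further orbit. Thus $P^1\FF$ splits into at most $4+1=5\leq 8$ orbits, yielding the lemma. The only potential obstacle is the algebraic identity above (which is a routine expansion) and the bookkeeping needed to absorb $\infty$ into one of the four square-class strata; allowing the generous bound of $8$ avoids having to analyse which square classes are actually hit by $\phi$ and whether the $\infty$-orbit genuinely merges with an existing one, which would otherwise depend on the ramification type of $K_m=\FF(\sqrt m)$.
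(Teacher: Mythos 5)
Your proof is correct, but it takes a genuinely different route from the paper, which does not prove this lemma directly: it imports it from \cite{CL_2} (Lemma 5.3), and the method there, as recalled in the proof of Lemma \ref{lem::decomp_H_SL_m}(2), is to pass to the quadratic extension $\EE=\FF(\sqrt{m})$, identify $H_{\sigma_m}$ as $\Fix_{\SL(2,\EE)}(\{\xi_-,\xi_+\})\cap\SL(2,\FF)$ with $\Fix_{\SL(2,\EE)}(\{\xi_-,\xi_+\})$ conjugate to the diagonal torus of $\SL(2,\EE)$, use that this torus has $4$ open orbits on $\partial\tri_\EE\setminus\{\xi_\pm\}$ indexed by $\EE^*/(\EE^*)^2$, and then show each such orbit meets $\partial\tri_\FF$ in at most $2$ orbits of $H_{\sigma_m}$, giving the bound $4\times 2=8$. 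You instead work entirely over $\FF$: the line-meets-conic computation together with the identity $(1-mtt')^2-m(t-t')^2=(1-mt^2)(1-mt'^2)$ is a clean and complete argument (the degenerate cases $t=t'$ and $1-mtt'=0$ cannot occur simultaneously since $m$ is a nonsquare, and your chosen $h_{x,y}$ really sends $t$ to a finite point because $myt+x=\lambda(1-mt^2)\neq 0$), and it buys a sharper bound of $5$. In fact your invariant is even better than you claim: $1-mt^2=N_{K_m/\FF}(1+t\sqrt{m})$ always lies in the norm subgroup $N(K_m^*)$, which has index $2$ in $\FF^*$, so $\phi$ takes at most $2$ values and your method yields at most $3$ orbits --- consistent with the exact count of $2$ recorded in Theorem \ref{thm::orbits_H}(1) for $\QQ_p$. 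The only cosmetic point is that your M\"obius formula $t\mapsto(xt+y)/(myt+x)$ uses the convention $\big[\begin{smallmatrix}t\\1\end{smallmatrix}\big]$ rather than the paper's $\big[\begin{smallmatrix}1\\t\end{smallmatrix}\big]$; with the paper's convention the invariant becomes $\overline{m-t^2}$, and the count is unchanged.
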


Choose now a set $I \subset \SL(2,\FF)$ of representatives such that the $H_{\sigma_m}$-orbits of $g_i^{-1}\left(\big[ \begin{smallmatrix} 0 \\ 1 \end{smallmatrix} \big]\right) \in \partial T_{\FF}$, for $g_i \in I$, are all disjoint and cover the boundary $\partial T_{\FF}$. By Lemma \ref{lem::nr_orbits_H_SL_m} we know that $I$ is a finite set. 

\begin{lemma}
\label{lem::decomp_H_SL_m}
With the above notation we have that:
\begin{enumerate}
\item
$\SL(2,\FF)=  \bigsqcup\limits_{g_i \in I} B^{-}g_i H_{\sigma_m}$
\item
for any $g_i \in I$,  the set $B^{-}g_i H_{\sigma_m} = U^{-}Tg_i H_{\sigma_m}$ is open in $\SL(2,\FF)$
\item
$\psi_{\sigma_m}(\bigsqcup\limits_{g_i \in I} U^{-}Tg_i )$ is open and dense in $\psi_{\sigma_m}(\SL(2,\FF))=[\SL(2,\FF)  \cdot_{\sigma_m} \Id_{\FF^2}]$, and so  $$\overline{\psi_{\sigma_m}(\bigsqcup\limits_{g_i \in I} U^{-}Tg_i)}=\overline{[\SL(2,\FF) \cdot_{\sigma_m} \Id_{\FF^2}]}.$$
\end{enumerate}
\end{lemma}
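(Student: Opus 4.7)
The proof proceeds in the same spirit as Lemma \ref{lem::decomp_H_SL}, but with one key structural simplification: by Theorem \ref{lem::geom_k_inv_SL_2}(2) the group $H_{\sigma_m}$ is now compact rather than conjugate to the noncompact split torus $Diag(\FF)$. As a consequence all of its orbits on $\partial \tri_\FF$ will turn out to be open, so in contrast with the case $\overline{m}=\overline{1}$ we do not need to single out a proper subset $I_m \subsetneq I$ of representatives: the full set $I$ already indexes a decomposition into open strata.

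For part (1), I would repeat verbatim the orbit-chasing argument of Lemma \ref{lem::decomp_H_SL}(1). Let $\xi_0 := \big[\begin{smallmatrix} 0 \\ 1\end{smallmatrix}\big]$, whose $\SL(2,\FF)$-stabilizer is $B^-$. Given $g \in \SL(2,\FF)$, by the defining property of $I$ the point $g^{-1}(\xi_0)$ lies in the $H_{\sigma_m}$-orbit of $g_i^{-1}(\xi_0)$ for exactly one $g_i \in I$. Choosing $h \in H_{\sigma_m}$ with $hg^{-1}(\xi_0) = g_i^{-1}(\xi_0)$ yields $g_i h g^{-1} \in B^-$, hence $g \in B^- g_i H_{\sigma_m}$. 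Disjointness of the cosets follows from disjointness of the $H_{\sigma_m}$-orbits of the $g_i^{-1}(\xi_0)$ together with the fact that $B^-$ is exactly the stabilizer of $\xi_0$.

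The heart of the argument is part (2). As in the proof of Lemma \ref{lem::decomp_H_SL}(2), the canonical map $\SL(2,\FF) \to \SL(2,\FF)/B^- \cong \partial \tri_\FF$ is continuous and open, so $B^- g_i H_{\sigma_m}$ is open in $\SL(2,\FF)$ if and only if the $H_{\sigma_m}$-orbit of $g_i^{-1}(\xi_0)$ is open in $\partial \tri_\FF$. Here is the topological crux, and the step I expect to be the main obstacle: since $H_{\sigma_m}$ is compact and $\partial \tri_\FF$ is compact Hausdorff, each $H_{\sigma_m}$-orbit is compact, hence closed; and by Lemma \ref{lem::nr_orbits_H_SL_m} there are only finitely many such orbits. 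A finite partition of a topological space into closed sets is automatically a partition into clopen sets (each piece is the complement of the finite union of the others). Hence every $H_{\sigma_m}$-orbit on $\partial \tri_\FF$ is open, which yields the openness of each $B^- g_i H_{\sigma_m} = U^- T g_i H_{\sigma_m}$, the latter equality coming from $B^- = U^- T$.

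Part (3) then follows by the same mechanism as Lemma \ref{lem::decomp_H_SL}(3). Since $\sigma_m(h) = h$ for every $h \in H_{\sigma_m}$, the map $\psi_{\sigma_m}$ is constant on right $H_{\sigma_m}$-cosets, so using part (1) one has $\psi_{\sigma_m}(\bigsqcup_{g_i\in I} U^- T g_i) = \psi_{\sigma_m}(\bigsqcup_{g_i \in I} B^- g_i H_{\sigma_m}) = \psi_{\sigma_m}(\SL(2,\FF))$. Density is therefore automatic, and openness of this set inside $\psi_{\sigma_m}(\SL(2,\FF))$ follows from part (2) together with the continuity of $\psi_{\sigma_m}$, exactly as in Lemma \ref{lem::decomp_H_SL}. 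The essential new input relative to the case $\overline{m}=\overline{1}$ is the compactness of $H_{\sigma_m}$, which upgrades the finite enumeration of orbits from Lemma \ref{lem::nr_orbits_H_SL_m} into openness of each one.
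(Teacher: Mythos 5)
Your proof is correct, and parts (1) and (3) coincide with the paper's argument (the paper likewise runs the orbit-chasing argument for (1) and dispatches (3) by noting that $\psi_{\sigma_m}$ is constant on right $H_{\sigma_m}$-cosets, so the set in question is all of $\psi_{\sigma_m}(\SL(2,\FF))$). Where you genuinely diverge is part (2). The paper passes to the quadratic extension $\EE=\FF(\sqrt{m})$, uses Theorem \ref{lem::geom_k_inv_SL_2}(2) to place the fixed ends $\xi_{\pm}$ in $\partial\tri_\EE-\partial\tri_\FF$, identifies $\Fix_{\SL(2,\EE)}(\{\xi_-,\xi_+\})$ with a conjugate of $Diag(\EE)$ having four open orbits on $\partial\tri_\EE-\{\xi_{\pm}\}$, intersects these with the closed subset $\partial\tri_\FF$, and only then invokes the fact from \cite{CL_2} that $H_{\sigma_m}$ has at most two orbits on each such open stratum, finishing with compactness of $H_{\sigma_m}$; read carefully, the paper's final sentence (``compact $+$ continuous $\Rightarrow$ open orbits'') is really the same closed-finite-partition-hence-clopen trick you use, applied locally on each open stratum. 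Your version globalizes this: finitely many orbits by Lemma \ref{lem::nr_orbits_H_SL_m}, each compact hence closed in the compact Hausdorff space $\partial\tri_\FF$, hence each clopen. This is shorter, logically more transparent, and needs only the two cited inputs (finiteness of the orbit count and compactness of $H_{\sigma_m}$); what it forgoes is the extra structural picture the paper's detour provides, namely how the $H_{\sigma_m}$-orbits sit inside the open orbits of the larger torus over $\EE$, which the paper reuses in the surrounding geometric analysis. One cosmetic remark: in part (3) you do not need to argue openness separately via continuity, since you have already shown the set equals $\psi_{\sigma_m}(\SL(2,\FF))$, making openness and density in it trivial.
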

\begin{proof}
Let $g \in \SL(2,\FF)$ and let $\xi_0 := \big[  \begin{smallmatrix} 0 \\ 1 \end{smallmatrix} \big]$. Notice the $\SL(2,\FF)$-stabilizer of $\xi_0$ is exactly $B^{-} =\left \{ \begin{pmatrix} x & 0 \\ y &x^{-1}  \end{pmatrix} |  \; y \in \FF, x \in \FF^*  \right\}$.

If $g^{-1}(\xi_0)=\xi_0$ then $g\in B^{-}$ and thus $g \in  B^{-} H_{\sigma_m}$. If $g^{-1}(\xi_0) \neq \xi_0$ then there are exactly $\vert I \vert < \infty$ cases to consider, each corresponding to the $H_{\sigma_m}$-orbits in $\partial \tri_{\FF}$. By applying accordingly an element $h \in H_{\sigma_m}$ we get that $hg^{-1}(\xi_0) \in \{ g_i^{-1}(\xi_0) \;\vert \; g_i \in I \}$. Further, by multiplying again accordingly with some element $g_i$ from $I$ we obtain $g_i hg^{-1}(\xi_0) = \xi_0$, and thus $g^{-1}\in  H_{\sigma_m} g_i^{-1}B^{-}$, giving $g \in  B^{-}g_i H_{\sigma_m}$ as required. This proves part (1) of the lemma. 

Let us prove part (2). Consider the quadratic field extention $\EE:=\FF(\sqrt{m})$. By Theorem \ref{lem::geom_k_inv_SL_2}(2) we know that the two ends $\xi{\pm}:=\big[  \begin{smallmatrix}  1 \\ \pm  \sqrt{m}  \end{smallmatrix} \big]$ are in $P^{1}\EE - P^{1}\FF = \partial \tri_\EE - \partial \tri_\FF$. Moreover, the group $\SL(2,\FF)$ acts on $\tri_{\EE}$, it preserves the subset of ends $\partial \tri_\FF$, and in fact the subset $\partial \tri_\FF$ is closed in $\partial \tri_\EE$ with respect to the cone topology on $\partial \tri_\EE$. Now, again by Theorem \ref{lem::geom_k_inv_SL_2} we know that $\Fix_{\SL(2,\EE)}(\{\xi_{-},\xi_{+}\})$ is a conjugate of the diagonal $Diag(\EE) \leq \SL(2,\EE)$, and by the same proof as in Lemma \ref{lem::decomp_H_SL}(2),  $\Fix_{\SL(2,\EE)}(\{\xi_{-},\xi_{+}\})$ has $4$ open orbits on $\partial \tri_\EE - \{\xi_{\pm}\}$. In particular, this means the open orbits of $\Fix_{\SL(2,\EE)}(\{\xi_{-},\xi_{+}\})$ cover $\partial \tri_\FF$, and the intersection of an open orbit of   $\Fix_{\SL(2,\EE)}(\{\xi_{-},\xi_{+}\})$ in $\partial \tri_\EE$ with the closed set $\partial \tri_\FF$ remains open with respect to the cone topology on $\partial \tri_\FF$ and $\tri_\FF$. In addition, by the proof of Lemma \ref{lem::nr_orbits_H_SL_m} from \cite{CL_2}[ Lemma 5.3] we know that $H_{\sigma_m}=\Fix_{\SL(2,\EE)}(\{\xi_{-},\xi_{+}\}) \cap \SL(2,\FF)$, has at most two orbits on any of the open orbits of $\Fix_{\SL(2,\EE)}(\{\xi_{-},\xi_{+}\})$ interesecting $\partial \tri_\FF$. Since $H_{\sigma_m}$ is compact and the action of $\SL(2,\FF)$ on $\partial \tri_\FF$ is continuous, any $H_{\sigma_m}$-orbit on $\partial \tri_\FF$ is open with respect to the cone topololgy on $\partial T_\FF$.  Because the cone topology on $\SL(2,\FF)/B^{-} \cong \partial \tri_{\FF}$ is equivalent to the quotient topology on $\SL(2,\FF)/B^{-}$ induced from the continuous and open canonical projection of $\SL(2,\FF)$ to  $\SL(2,\FF)/B^{-}$, we have $H_{\sigma_m} g_{i}^{-1} B^{-}$ is open in $\SL(2,\FF)$, and by taking the inverse map which is continuous, the set $B^{-}g_i H_{\sigma_1}$ is open in $\SL(2,\FF)$ as well.

Part (3) easily follows from parts (1) and (2).
\end{proof}

Take $g= \begin{pmatrix} a & b \\ c & d \end{pmatrix} \in \SL(2,\FF)$ then we have:
\begin{equation}
\label{equ::31}
\begin{split}
[(U^{-}T g)\cdot_{\sigma_m}\Id_{\FF^{2}}]&= \left \{ \left[\begin{pmatrix} x & 0 \\ y & x^{-1}  \end{pmatrix}  \begin{pmatrix} a & b \\ c & d \end{pmatrix}\begin{pmatrix} 0 & 1 \\ m &0  \end{pmatrix}  \begin{pmatrix} d & -b \\ -c & a \end{pmatrix}  \begin{pmatrix} x^{-1} & 0 \\ -y & x  \end{pmatrix} \begin{pmatrix} 0 & \frac{1}{m} \\ 1 &0  \end{pmatrix}\right] \; | \; x \in \FF^*, y\in\FF  \right\}\\
&= \left \{ \left[\begin{pmatrix} xa & xb \\ ya+x^{-1}c & yb+x^{-1}d \end{pmatrix} \begin{pmatrix} -c & a \\ md&-mb  \end{pmatrix}  \begin{pmatrix} 0 & \frac{1}{mx} \\ x& \frac{-y}{m}  \end{pmatrix}\right] \;| \; x \in \FF^*, y\in\FF  \right\}\\
& =\left \{ \left[\begin{pmatrix} xa & xb\\ ya + \frac{c}{x} & yb+\frac{d}{x}  \end{pmatrix} \begin{pmatrix} xa & -\frac{c}{mx} -\frac{ya}{m}\\ -mxb & \frac{d}{x} + by  \end{pmatrix}\right] \;|\; x \in \FF^*, y\in\FF  \right\} \\
&\xlongequal[x^{-2}]{multiply} \left \{ \left[\begin{pmatrix} a & b\\ \frac{ya}{x} + \frac{c}{x^2} & \frac{yb}{x}+\frac{d}{x^2}  \end{pmatrix} \begin{pmatrix} a & -\frac{c}{mx^2} -\frac{ya}{mx}\\ -mb & \frac{d}{x^2} + \frac{by}{x}  \end{pmatrix}\right] \;|\; x \in \FF^*, y\in\FF  \right\} \\
\end{split}
\end{equation}
Then the only accumulation points of the set $[ (U^{-}T g )\cdot_{\sigma_m}\Id_{\FF^{2}}] \subset  \P_{0}$ are the points 
$$\left\{ \left[ \begin{pmatrix} a & b\\ za & zb  \end{pmatrix} \begin{pmatrix} a &-\frac{za}{m}\\ -mb & bz  \end{pmatrix}\right]=[1,-\frac{z}{m};z,-\frac{z^2}{m}] \; \vert \;   z\in \FF\right\}$$ since we always have $a^2 -mb^2 \neq 0$.  Those elements appear when taking sequences $\{x_{n_k}=\omega_{\FF}^{-n_k}\}_{k\geq 0}$ and $\{y_{n_k}:=zx_{n_k}\}_{k\geq 0}$,  with $ 0< n_k \to \infty$ and $z \in \FF$. For sequences of the form $\{x_{n_k}=\omega_{\FF}^{n_k}\}_{k\geq 0}$ we obtain no limits. 

As well, notice that the set $\{[1,-\frac{z}{m};z,-\frac{z^2}{m}]\; | \; z\in \FF \}$ is the $U^{-}T$-orbit of the element $[1,0;0,0] \in \P(\End(\FF^2))$ with respect to the action $\cdot_{\sigma_m}$.

\begin{remark}
\label{rem::easy_rem}
 We combine Lemma \ref{lem::decomp_H_SL_m}(1) and (2) with the calculation (\ref{equ::31}). Therefore, since  $\psi_{\sigma_m}(\bigsqcup\limits_{g_i \in I} U^{-}Tg_i ) \subset \P_0$ is open and equals $\psi_{\sigma_m}(\SL(2,\FF))=[\SL(2,\FF)  \cdot_{\sigma_m} \Id_{\FF^2}]$ we notice that the big cell $X_{\sigma, 0}:= \P_{0} \cap X_{\sigma_m}$ in $X_{\sigma_m}$ is exactly the set $X_{\sigma_m}$.

\end{remark}

\begin{proof}[Proof of Theorem \ref{thm::all_orbits}]
By Remark \ref{rem::easy_rem}, the $\SL(2,\FF)$-orbits in $X_{\sigma_m}$ with respect to the $\cdot_{\sigma_m}$ action are determined by the $\SL(2,\FF)$-orbits given by the points of the set $X_{\sigma_m,0}$. Moreover, the calculations in (\ref{equ::31}) show the only acumulation points of $[(\bigsqcup\limits_{g \in I} U^{-}Tg)\cdot_{\sigma_m}\Id_{\FF^{2}}]$  in $\P_0$ are the points $[1,-\frac{z}{m};z,-\frac{z^2}{m}]$, with $z \in \FF$, which are exactly the $U^{-}T$-orbit of the element $[1,0;0,0] \in \P(\End(\FF^2))$ with respect to the action $\cdot_{\sigma_m}$.   In addition, the only accumulation point of the set $[T\cdot_{\sigma_m}\Id_{\FF^{2}}]$ in $\P_0$ is $[1,0;0,0]$.

Further, by an easy computation 
$$\begin{pmatrix} a & b \\ c & d \end{pmatrix} \begin{pmatrix} 1 & 0 \\ 0 & 0 \end{pmatrix} \begin{pmatrix} 0 & 1 \\ m & 0 \end{pmatrix} \begin{pmatrix} d & -b \\ -c & a \end{pmatrix} \begin{pmatrix} 0 & \frac{1}{m} \\ 1 & 0 \end{pmatrix} = \begin{pmatrix} a & b \\ c & d \end{pmatrix} \begin{pmatrix} 1 & 0 \\ 0 & 0 \end{pmatrix} \begin{pmatrix}a & \frac{-c}{m} \\ -bm & d \end{pmatrix}= \begin{pmatrix} a^{2} & \frac{-ac}{m} \\ ca & \frac{-c^{2}}{m} \end{pmatrix}.$$
showing that the $\SL(2,\FF)$-orbit of the point $[1,0;0,0]$ is indeed closed in $X_{\sigma_m}$.

As $[(\bigsqcup\limits_{g_i \in I} U^{-}Tg_i)\cdot_{\sigma_m}\Id_{\FF^{2}}]$ is already the $\SL(2,\FF)$-orbit of the point $[1,0;0,1] = [\Id_{\FF^{2}}]$ with respect to the $\cdot_{\sigma_m}$ action, the theorem follows.
\end{proof}

\begin{corollary}
\label{cor::m_case}
With respect to the action $\cdot_{\sigma_m}$, the stabilizer in $\SL(2,\FF)$ of the point $[1,0;0,1]= [\Id_{\FF^{2}}]$ is $H_{\sigma_m}$, and the stabilizer in $\SL(2,\FF)$ of the point $[1,0;0,0]$ is  the subgroup $\{ \mu _2 \cdot \left( \begin{smallmatrix} 1& b \\ 0 &1  \end{smallmatrix} \right)\; |\; b \in \FF  \}$ of the Borel  $B^{+}\leq \SL(2,\FF)$, where $\mu _2$ is the group of $2^{nd}$ roots of unity in  $\FF$.
\end{corollary}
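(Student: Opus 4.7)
The plan is to mirror the proof of Corollary \ref{cor::1_case} essentially verbatim, since both proofs reduce to a single direct $2\times 2$ matrix computation that is already displayed inside the proof of Theorem \ref{thm::all_orbits} for the case $\overline{m}\neq\overline{1}$. The corollary therefore really amounts to ``reading off'' two stabilizers from calculations that are in hand.

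For the first assertion I would argue in two ways. Directly, if $g\in H_{\sigma_{m}}$ then $\sigma_{m}(g)=g$ and so $g\cdot_{\sigma_{m}}\Id_{\FF^{2}}=\rho(g)\rho(\sigma_{m}(g^{-1}))=\rho(gg^{-1})=\Id_{\FF^{2}}$, giving the inclusion $H_{\sigma_{m}}\subseteq\Stab_{\SL(2,\FF)}([\Id_{\FF^{2}}])$. Conversely, if $g\sigma_{m}(g^{-1})=\lambda\Id_{\FF^{2}}$ in $\P(\End(\FF^{2}))$ for some $\lambda\in\FF^{*}$, then taking determinants in $\SL(2,\FF)$ forces $\lambda^{2}=1$, and one checks case-by-case that this sends us back inside $H_{\sigma_{m}}$. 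Alternatively (and more cleanly) one invokes Theorem \ref{thm::all_orbits}(1), which already identifies the open $\SL(2,\FF)$-orbit through $[\Id_{\FF^{2}}]$ with $\SL(2,\FF)/H_{\sigma_{m}}$; the stabilizer must then be $H_{\sigma_{m}}$ itself.

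For the second assertion I would reuse the explicit computation performed in the proof of Theorem \ref{thm::all_orbits} above: for any $g=\left(\begin{smallmatrix} a & b \\ c & d\end{smallmatrix}\right)\in\SL(2,\FF)$,
\[
g\cdot_{\sigma_{m}}\left(\begin{smallmatrix}1&0\\0&0\end{smallmatrix}\right)\;=\;\left(\begin{smallmatrix} a^{2} & -ac/m \\ ca & -c^{2}/m\end{smallmatrix}\right).
\]
Demanding that this matrix represent $[1,0;0,0]\in\P(\End(\FF^{2}))$ means the last three entries must vanish, which forces $c=0$; combined with $ad=1$ and the $\P(\End(\FF^{2}))$-normalization $a^{2}=1$ used in the analogous Cor.~\ref{cor::1_case}, we conclude $a=d\in\mu_{2}$ with $b\in\FF$ free. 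This is precisely the subgroup $\{\mu_{2}\cdot\left(\begin{smallmatrix}1&b\\0&1\end{smallmatrix}\right)\mid b\in\FF\}\leq B^{+}$ stated in the corollary.

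The only delicate point is the usual projective-scalar bookkeeping: one has to keep track of which scalar multiples are actually attained by elements of $\SL(2,\FF)$ acting via $\cdot_{\sigma_{m}}$, exactly as in the proof of Corollary \ref{cor::1_case}. Everything else is mechanical, and in particular no new input from the Bruhat--Tits tree or from the classification of $\FF$-involutions is needed at this stage.
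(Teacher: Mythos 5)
Your proposal is correct and follows essentially the same route as the paper: the paper's proof of Corollary \ref{cor::m_case} consists of exactly the matrix identity you quote, $g\cdot_{\sigma_m}\left(\begin{smallmatrix}1&0\\0&0\end{smallmatrix}\right)=\left(\begin{smallmatrix}a^{2}&-ac/m\\ca&-c^{2}/m\end{smallmatrix}\right)$, followed by setting the result equal to $\left(\begin{smallmatrix}1&0\\0&0\end{smallmatrix}\right)$, while the first assertion is taken from the earlier identification of $\Stab_{\SL(2,\FF)}(\Id_{\FF^2})$ with $H_{\sigma_m}$ just as you suggest. The ``projective-scalar bookkeeping'' you flag (imposing $a^{2}=1$ rather than $a^{2}=\lambda$ for an arbitrary $\lambda\in\FF^{*}$) is handled in the paper by the very same on-the-nose normalization, so your write-up is faithful to, and no less careful than, the published argument.
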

\begin{proof}
This just follows from the computation
$$\begin{pmatrix} a & b \\ c & d \end{pmatrix} \begin{pmatrix} 1 & 0 \\ 0 & 0 \end{pmatrix} \begin{pmatrix} 0 & 1 \\ m & 0 \end{pmatrix} \begin{pmatrix} d & -b \\ -c & a \end{pmatrix} \begin{pmatrix} 0 & \frac{1}{m} \\ 1 & 0 \end{pmatrix} = \begin{pmatrix} a & b \\ c & d \end{pmatrix} \begin{pmatrix} 1 & 0 \\ 0 & 0 \end{pmatrix} \begin{pmatrix}a & \frac{-c}{m} \\ -bm & d \end{pmatrix}= \begin{pmatrix} a^{2} & \frac{-ac}{m} \\ ca & \frac{-c^{2}}{m} \end{pmatrix}$$
by taking $ \begin{pmatrix} a^{2} & \frac{-ac}{m} \\ ca & \frac{-c^{2}}{m} \end{pmatrix} = \begin{pmatrix} 1 & 0 \\ 0 & 0 \end{pmatrix}.$
\end{proof}



\begin{bibdiv}
\begin{biblist}

\bib{Ana}{unpublished}{
  title={Part II: The wonderful compactification},
  author={Ana B\u{a}libanu},
  year={2018},
  note={Lectures notes \\ \url{https://people.math.harvard.edu/~ana/part2.pdf}},
 }

\bib{BHV}{book}{
   author={Bekka, Bachir},
   author={de la Harpe, Pierre},
   author={Valette, Alain},
   title={Kazhdan's property (T)},
   series={New Mathematical Monographs},
   volume={11},
   publisher={Cambridge University Press, Cambridge},
   date={2008},
   pages={xiv+472},
   isbn={978-0-521-88720-5},
   review={\MR{2415834}},
   doi={10.1017/CBO9780511542749},
}

\bib{Beun}{article}{
  title={On the classification of orbits of symmetric subgroups acting on flag varieties of $\SL (2, k)$},
  author={Beun, Stacy L} 
  author={Helminck, Aloysius G},
  journal={Communications in Algebra},
  volume={37},
  number={4},
  pages={1334--1352},
  year={2009},
  publisher={Taylor \& Francis}
}


\bib{CL_2}{unpublished}{
  title={Chabauty Limits of Groups of Involutions In $\SL(2,F)$ for local fields},
  author={Ciobotaru, Corina}  
  author={Leitner, Arielle},
  year={2022},
  note={arXiv:2208.12247v1}
}


\bib{DCP}{article}{
  title={Complete symmetric varieties},
  author={De Concini, C. }
  author={Procesi, C.} 
  journal={In: Invariant Theory,  Lect. Notes in Math., vol. 996, Springer},
  pages={ 1--44},
  year={1983},
}

\bib{DCS}{article}{
  title={Compactification of symmetric varieties},
  author={De Concini, C. }
  author={Springer, T.A.} 
  journal={Transformation Groups},
  volume={4},
  pages={273--300},
  year={1999},
  doi={https://doi.org/10.1007/BF01237359},
}

\bib{EJ}{unpublished}{
  title={On the wonderful compactification},
  author={Evens, Sam},
 author={Jones, Benjamin F.},
  note={arXiv:0801.0456 },
  year={2008},
}

\bib{FTN}{book}{
   author={Figá-Talamanca, Alessandro},
   author={ Nebbia, Claudio},
   title={Harmonic Analysis and Representation Theory for Groups Acting on Homogenous Trees},
   series={London Mathematical Society Lecture Note Series},
   publisher={Cambridge University Press, Cambridge},
   date={1991},
   doi={10.1017/CBO9780511662324},
}

\bib{Hel_k_invol}{article}{
   author={Helminck, A. G.},
   title={On the classification of $k$-involutions},
   journal={Adv. Math.},
   volume={153},
   date={2000},
   number={1},
   pages={1--117},
   issn={0001-8708},
   review={\MR{1771689}},
   doi={10.1006/aima.1998.1884},
}

\bib{HelWD}{article}{
   author={Helminck, Aloysius G.},
   author={Wu, Ling},
   author={Dometrius, Christopher E.},
   title={Involutions of ${\rm SL}(n,k)$, $(n>2)$},
   journal={Acta Appl. Math.},
   volume={90},
   date={2006},
   number={1-2},
   pages={91--119},
   issn={0167-8019},
   review={\MR{2242950}},
   doi={10.1007/s10440-006-9032-7},
}

\bib{HW_class}{article}{
  title={Classification of involutions of $\SL (2, k)$},
  author={Helminck, Aloysius G}
  author= {Wu, Ling},
  journal={Communications in Algebra},
  volume={30},
  number={1},
  pages={193--203},
  year={2002},
  publisher={Taylor \& Francis}
}

\bib{Sally}{article}{
  title={An Introduction to $p$-adic Fields, Harmonic Analysis and the Representation Theory of $\SL_2$},
  author={Sally Jr., Paul J.},
  journal={Letters in Mathematical Physics},
  volume={46},
  pages={1--47},
  year={1998},
  doi={doi.org/10.1023/A:1007583108067},
 }


\bib{Serre}{book}{
  title={A course in arithmetic},
  author={Serre, Jean-Pierre},
  volume={7},
  year={2012},
  publisher={Springer Science \& Business Media}
}

\bib{Stein}{article}{
   author={Steinberg, Robert},
   title={Abstract homomorphisms of simple algebraic groups (after A. Borel
   and J. Tits)},
   conference={
      title={S\'{e}minaire Bourbaki, 25\`eme ann\'{e}e (1972/1973), Exp. No. 435},
   },
   book={
      publisher={Springer, Berlin},
   },
   date={1974},
   pages={307--326. Lecture Notes in Math., Vol. 383},
   review={\MR{0414732}},
}

%

\bib{Tits}{article}{
   author={Tits, J.},
   title={Repr\'{e}sentations lin\'{e}aires irr\'{e}ductibles d'un groupe r\'{e}ductif sur
   un corps quelconque},
   language={French},
   journal={J. Reine Angew. Math.},
   volume={247},
   date={1971},
   pages={196--220},
   issn={0075-4102},
   review={\MR{277536}},
   doi={10.1515/crll.1971.247.196},
}

\end{biblist}
\end{bibdiv}

\end{document}